\numberwithin{equation}{section}
\newcommand{\stirling}[2]{\genfrac{[}{]}{0pt}{}{#1}{#2}}
\newcommand{\stirlingsec}[2]{\genfrac{\{}{\}}{0pt}{}{#1}{#2}}
\newcommand{\stirlingb}[2]{B\hspace*{-0.2mm}\big[{#1},{#2}\big]}
\newcommand{\stirlingsecb}[2]{B\hspace*{-0.2mm}\big\{{#1},{#2}\big\}}
\newcommand{\N}{\mathbb{N}}
\newcommand{\R}{\mathbb{R}}
\newcommand{\C}{\mathbb{C}}
\newcommand{\F}{\mathcal{F}}
\newcommand{\PP}{\mathbb{P}}
\newcommand{\eps}{\varepsilon}
\newcommand*\xbar[1]{%
   \hbox{%
     \vbox{%
       \hrule height 0.5pt % The actual bar
       \kern0.25ex%         % Distance between bar and symbol
       \hbox{%
         \kern-0.05em%      % Shortening on the left side
         \ensuremath{#1}%
         \kern-0.1em%      % Shortening on the right side
       }%
     }%
   }%
}
\DeclareMathOperator{\E}{\mathbb{E}}
\DeclareMathOperator{\lin}{lin}
\DeclareMathOperator{\conv}{conv}
\DeclareMathOperator{\pos}{pos}
\DeclareMathOperator{\relint}{relint}
\newcommand{\bP}{\mathbb{P}}
\newcommand{\cF}{\mathcal{F}}
\renewcommand{\P}{\mathbb{P}}
\newcommand{\aff}{\mathop{\mathrm{aff}}\nolimits}
\newcommand{\eqdistr}{\stackrel{d}{=}}
\theoremstyle{plain}
\newtheorem{theorem}{Theorem}[section]
\newtheorem{lemma}[theorem]{Lemma}
\newtheorem{corollary}[theorem]{Corollary}
\newtheorem{proposition}[theorem]{Proposition}
\theoremstyle{definition}
\theoremstyle{remark}
\begin{document}

\author{Thomas Godland}
\address{Thomas Godland: Institut f\"ur Mathematische Stochastik,
Westf\"alische Wilhelms-Universit\"at M\"unster,
Orl\'eans-Ring 10,
48149 M\"unster, Germany}
\email{t\_godl01@uni-muenster.de}

\author{Zakhar Kabluchko}
\address{Zakhar Kabluchko: Institut f\"ur Mathematische Stochastik,
Westf\"alische {Wilhelms-Uni\-ver\-sit\"at} M\"unster,
Orl\'eans--Ring 10,
48149 M\"unster, Germany}
\email{zakhar.kabluchko@uni-muenster.de}

%\author[Dmitry Zaporozhets]{Dmitry Zaporozhets}
%\address{Dmitry Zaporozhets: St.~Petersburg Department of Steklov Institute of Mathematics, Fontanka~27, 191011 St.~Petersburg, Russia}
%\email{zap1979@gmail.com}

\title[Angle sums of Schl\"afli orthoschemes]{Angle sums of Schl\"afli orthoschemes}

\keywords{Schl\"afli orthoschemes, Weyl chambers, polytopes, polyhedral cones, solid angles, conic intrinsic volumes, Stirling numbers, random walks}

\subjclass[2010]{Primary: 60D05, 11B73.  Secondary: 51F15, 52A55, 52A22}

%\thanks{TG and ZK acknowledge support by the German Research Foundation under Germany's Excellence Strategy  EXC 2044 -- 390685587, Mathematics M\"unster: Dynamics - Geometry - Structure.
%The work of DZ has been  supported by  the Program of Fundamental
%Researches of Russian Academy of Sciences ``Modern Problems of
%Fundamental Mathematics''.
%}

\begin{abstract}
We consider the simplices 
\begin{align*}
K_n^A=\{x\in\R^{n+1}:x_1\ge x_2\ge \ldots\ge x_{n+1},x_1-x_{n+1}\le 1,x_1+\ldots+x_{n+1}=0\}
\end{align*}
and
\begin{align*}
K_n^B=\{x\in\R^n:1\ge x_1\ge x_2\ge \ldots\ge x_n\ge 0\},
\end{align*}
which are called the \textit{Schl\"afli orthoschemes of types $A$ and $B$}, respectively.  
We describe the tangent cones at their $j$-faces and compute explicitly the sums of the conic intrinsic volumes 
of these tangent cones at all $j$-faces of $K_n^A$ and $K_n^B$. This setting contains sums of external and internal angles of $K_n^A$ and $K_n^B$ as special cases. The sums are evaluated in terms of Stirling numbers of both kinds.  We generalize these results to finite products of Schl\"afli orthoschemes of type $A$ and $B$ and, as a probabilistic consequence,  derive formulas for the expected number of $j$-faces of the Minkowski sums of the convex hulls of a  finite number of Gaussian random walks and random bridges.
Furthermore, we evaluate the analogous angle sums for the tangent cones of Weyl chambers of types $A$ and $B$ and finite products thereof.
\end{abstract}
%We offer two methods of proof for the angle sum of the tangent cones of the Schl\"afli orthoscheme of type $B$. The first, computing the generating polynomial of the conic intrinsic volumes, and the second, evaluating the internal and external angles of the faces of the tangent cones.

\maketitle
\section{Introduction}

The Schl\"afli orthoscheme of type $B$ in $\R^n$, denoted by $K_n^B$, is the simplex spanned by the $n+1$ vertices
\begin{align*}
(0,0,0,\ldots,0),(1,0,0,\ldots,0),(1,1,0,\ldots,0),\ldots,(1,1,1,\ldots,1)
\end{align*}
or, equivalently,
\begin{align*}
K_n^B=\{x\in\R^n:1\ge x_1\ge x_2\ge \ldots\ge x_n\ge 0\}.
\end{align*}
The classical intrinsic volumes of $K_n^B$ were computed by Gao and Vitale~\cite{GV01} in order to evaluate the intrinsic volumes of the so-called Brownian motion body.

The Schl\"afli orthoscheme of type $A$ in $\R^{n+1}$, denoted by $K_n^A$, was studied by Gao~\cite{Gao2003} in the context of Brownian bridges and is defined as the simplex spanned by the vertices $P_0,\ldots,P_{n+1}$, where $P_0:=(0,\ldots,0)$ and
\begin{align*}
P_i=(\underbrace{1,\ldots,1}_{i},0,\ldots,0)-\frac{i}{n+1}(1,1,\ldots,1)
\end{align*}
for $i=1,\ldots,n+1$. Equivalently, the Schl\"afli orthoscheme $K_n^A$ can be expressed as
\begin{align*}
K_n^A=\{x\in\R^{n+1}:x_1\ge x_2\ge \ldots\ge x_{n+1},x_1-x_{n+1}\le 1,x_1+\ldots+x_{n+1}=0\}.
\end{align*}
We will present further details on the probabilistic meaning of  Schl\"afli orthoschemes in Section~\ref{sec:schlaefli_orthoschemes}.

In the present paper, we evaluate certain angle sums or, more generally, sums of conic intrinsic volumes, of the Schl\"afli orthoschemes. For a polytope $P$, let $\cF_j(P)$ denote the set of all $j$-dimensional faces of $P$. The tangent cone of $P$ at its $j$-dimensional face $F$ is the convex cone $T_F(P)$ defined by
\begin{align*}
T_F(P)=\{x\in\R^n: v+\eps x\in P\text{ for some }\eps>0\},
\end{align*}
where $v$ is any point in $F$ not belonging to a face of smaller dimension. We explicitly compute the conic intrinsic volumes of the tangent cones of the Schl\"afli orthoschemes at their $j$-dimensional faces and, in particular, the sum of the conic intrinsic volumes over all such faces. The $k$-th conic intrinsic volume of a convex cone $C$, denoted by $\upsilon_k(C)$,   is a spherical or conic  analogue of the usual intrinsic volume of a convex set and will be formally introduced in Section~\ref{section:integral_geometry}.  Among other results, we will show that
\begin{align}\label{eq:introd_anglesums}
\sum_{F\in\cF_j(K_n^A)}\upsilon_k(T_{F}(K_n^A))
=
\sum_{F\in\cF_j(K^B_n)}\upsilon_k(T_{F}(K_n^B))
=\frac{j!}{n!}\stirling{n+1}{k+1}\stirlingsec{k+1}{j+1},
%\quad
%=\frac{j!}{n!}\stirling{n+1}{k+1}\stirlingsec{k+1}{j+1},
\end{align}
where the numbers $\stirling{n}{k}$ and $\stirlingsec{n}{k}$ are the Stirling numbers of the first and second kind, respectively.

Furthermore, we will compute the analogous angle (and conic intrinsic volume) sums for the tangent cones of \textit{Weyl chambers} of type $A$ and $B$ which are convex cones in  $\R^n$ defined by
\begin{align*}
A^{(n)}:=\{x\in\R^n:x_1\ge x_2\ge \ldots\ge x_n\}
\;\;
\text{ and }
\;\;
B^{(n)}:=\{x\in\R^n:x_1\ge x_2\ge \ldots\ge x_n\ge 0\}.
\end{align*}
%and
%\begin{align*}
%B^{(n)}:=\{x\in\R^n:x_1\ge x_2\ge \ldots\ge x_n\ge 0\}.
%\end{align*}
%and a \textit{Weyl chambers of type $A$} in $\R^n$ is given by
%\begin{align*}
%\end{align*}
The corresponding formulas are given by
\begin{align}\label{eq:introd_anglesums_Weyl chambers}
\sum_{F\in\cF_j(A^{(n)})}\upsilon_k(T_{F}(A^{(n)}))=
\frac{j!}{n!}\stirling{n}{k}\stirlingsec{k}{j},
\quad
\sum_{F\in\cF_j(B^{(n)})}\upsilon_k(T_{F}(B^{(n)}))
=\frac{2^jj!}{2^nn!}\stirlingb nk \stirlingsecb kj,
\end{align}
where the numbers $\stirlingb nk$ and $\stirlingsecb nk$ denote the $B$-analogues of the Stirling numbers of the first and second kind, respectively, which we will formally introduce in Section~\ref{section:stirling_numbers}. An application of Equations~\eqref{eq:introd_anglesums} and~\eqref{eq:introd_anglesums_Weyl chambers} to a problem of compressed sensing will be given in Section~\ref{section:compressed_sensing}.
Observe that in the special cases $k=n$ and $k=j$, Equations~\eqref{eq:introd_anglesums} and~\eqref{eq:introd_anglesums_Weyl chambers} yield formulas for the sums of the internal and external angles of Schl\"afli orthoschemes and Weyl chambers. % of both types $A$ and $B$.

We will generalize the above results to finite products of Schl\"afli orthoschemes and finite products of Weyl chambers leading to rather complicated formulas in terms of coefficients in the Taylor expansion of a certain function. The main results on angle and conic intrinsic volume sums will be stated in Section~\ref{section:angle_sums}.
%Here, $\stirling{n}{k}_r$ and $\stirlingsec{n}{k}_r$ denote the $r$-Stirling numbers of the first and second kind, respectively. The $r$-Stirling number $\stirling{n}{k}_r$ is defined as the number of permutations of the set $\{1,\ldots,n\}$ having exactly $k$ cycles such that the elements $1,2,\ldots,r$ are all in different cycles. Similarly, the $r$-Stirling number $\stirlingsec{n}{k}_r$ is defined as the number of partitions of the set $\{1,\ldots,n\}$ into $k$ non-empty subsets such that the elements $1,2,\ldots,r$ are all in different subsets. For more details, we refer to Section~\ref{section:stirling_numbers}.
%
%The above formula~\eqref{eq:introd_mixed_formula} includes a formula for the sum over the intrinsic volumes of the tangent cones of a finite product of only $B$-orthoschemes and only $A$-orthoschemes as special cases $a=0$ (implying $m=0$) and $b=0$ (implying $n=0$), respectively.
%\red{Hab die nächsten beiden Abschnitte vertauscht. Die Verbindung zu Gausschen random walks and bridges soll noch weiter hervorgehoben werden.}
As a probabilistic interpretation of these results,  we consider convex hulls of Gaussian random walks and random bridges in Section~\ref{section:facenumbers_walks_bridges}. The expected numbers of $j$-faces of the convex hull of a single Gaussian random walk or a Gaussian bridge in $\R^d$ (even in a more general non-Gaussian setting) were already evaluated in~\cite{KVZ17}.
%These formulas are recovered by using~\eqref{eq:introd_anglesums} and a connection between the expected number of faces of the Gaussian projection of a polytope and the Grassmann angles of the tangent cones of this polytope established by Affentranger and Schneider~\citep{AS92}.
Our general result on the angle sums of products of Schl\"afli orthoschemes yields a formula for the expected number of $j$-faces of the Minkowski sum of several convex hulls of Gaussian random walks or Gaussian random bridges.

It turns out that the tangent cones of the Schl\"afli orthoschemes (and of the Weyl chambers) are essentially products of Weyl chambers of type $A$ and $B$. We will derive~\eqref{eq:introd_anglesums} and~\eqref{eq:introd_anglesums_Weyl chambers} as a special case of a more general Proposition~\ref{prop:sum_intr_vol} stated in Section~\ref{section:method_of_proof}. This proposition gives a formula for the sum of the conic intrinsic volumes of a product of Weyl chambers in terms of the generalized Stirling numbers of the first and second kind. The main ingredients in the proof of this proposition are the known formulas for the conic intrinsic volumes of Weyl chambers; see e.g.~\cite[Theorem 4.2]{KVZ15} or \cite[Theorem 1.1]{GK20_Intr_Vol}.
We will also present an alternative proof of Proposition~\ref{prop:sum_intr_vol} in Section~\ref{section:proofs_intext_angles}, where the main ingredient is an explicit evaluation of the internal and external angles of the tangent cones and their faces.
%\blue{We will present a different proof of Proposition~\ref{prop:sum_intr_vol} in Section~\ref{section:proofs_intext_angles}, where the main ingredient is an explicit evaluation of the internal and external angles of the tangent cones and their faces.}\red{Auf die ArXiv Version verweisen!}

%\begin{align*}
%&\E f_j(C^{(1)}+\ldots+C^{(b)}+\widetilde{C}^{(1)}+\ldots+\widetilde{C}^{(a)})\\
%&	\quad=\frac{2\cdot j!}{(n+m)!}\sum_{l\ge 1}\stirling{n+m+a+b}{d+a+b-2l+1}_{a+b}\stirlingsec{d+a+b-2l+1}{j+a+b}_{a+b}
%\end{align*}
%where $C^{(i)}=\conv\{S_0^{(i)},S_1^{(i)},\ldots,S_{n_i}^{(i)}\}$ and $\widetilde{C}^{(j)}=\conv\{\widetilde{S}_0^{(j)},\widetilde{S}_1^{(j)},\ldots,\widetilde{S}_{m_j+1}^{(j)}\}$ denote the convex hulls of the $d$-dimensional Gaussian random walks $(S_0^{(i)},S_1^{(i)},\ldots,S_{n_i}^{(i)})$ and the Gaussian random bridges $(\widetilde{S}_0^{(j)},\widetilde{S}_1^{(j)},\ldots,\widetilde{S}_{m_j+1}^{(j)})$, $i=1,\ldots,b$ and $j=1,\ldots,a$.

\section{Preliminaries}\label{section:prelimiaries}

In this section we collect notation and facts from convex geometry and combinatorics. The reader may skip this section at first reading and return to it when necessary.

\subsection{Facts from convex geometry}\label{section:convex_geometry}
For a set $M\subset\R^n$ denote by $\lin M$ (respectively, $\aff M$) its linear (respectively, affine) hull, that is, the minimal linear (respectively, affine) subspace containing $M$. Equivalently, $\lin M$ (respectively, $\aff M$) is the set of all linear (respectively, affine) combinations of elements of $M$. The relative interior of $M$, denoted by $\relint M$, is the set of interior points of $M$ relative to its affine hull $\aff M$.
%The dimension of $M$ is defined as the dimension of $\aff M$.
Let also $\conv M$ denote the convex hull of $M$ which is defined as the minimal convex set containing $M$, or equivalently
\begin{align*}
\conv M:=\Big\{\sum_{i=1}^m\lambda_ix_i:m\in\N,x_1,\ldots,x_m\in M,\lambda_1,\ldots,\lambda_m\ge 0, \lambda_1+\ldots+\lambda_m=1\Big\}.
\end{align*}
Similarly, let $\pos M$ denote the positive (or conic) hull of $M$:
\begin{align*}
\pos M:=\Big\{\sum_{i=1}^m\lambda_ix_i:m\in\N,x_1,\ldots,x_m\in M,\lambda_1,\ldots,\lambda_m\ge 0\Big\}.
\end{align*}

A set $C\subset \R^n$ is called a \textit{(convex) cone} if $\lambda_1x_1+\lambda_2x_2\in C$ for all $x_1,x_2\in C$ and $\lambda_1,\lambda_2\ge 0$. Thus, $\pos M$ is the minimal cone containing $M$. The \textit{dual cone} of a cone $C\subset\R^n$ is defined as
\begin{align*}
C^\circ=\{x\in\R^n:\langle x,y\rangle\le 0\; \text{ for all } y\in C\},
\end{align*}
where $\langle\cdot,\cdot\rangle$ denotes the Euclidean scalar product.  We will make use of the following simple duality relation that holds for arbitrary $x_1,\ldots,x_m\in\R^n$:
\begin{align}\label{eq:duality_relation}
\pos\{x_1,\ldots,x_m\}^\circ=\{v\in\R^n:\langle v,x_i\rangle\le 0 \text{ for all } i=1,\ldots,m\}.
\end{align}

A \textit{polyhedral set} $P\subset\R^d$ is an intersection of finitely many closed half-spaces (whose boundaries need not pass through the origin). A bounded polyhedral set is called \textit{polytope}. A \textit{polyhedral cone} is an intersection of finitely many closed half-spaces whose boundaries contain the origin and therefore a special case of a polyhedral set. The faces of $P$ (of arbitrary dimension) are obtained by replacing some of the half-spaces, whose intersection defines the polyhedral set, by their boundaries and taking the intersection. For $k\in\{0,1,\ldots,d\}$, we denote the set of $k$-dimensional faces of a polyhedral set $P$ by $\cF_k(P)$. Furthermore, we denote the number of $k$-faces of $P$ by $f_k(P):=|\cF_k(P)|$. The \textit{tangent cone} of $P$ at a face $F\in\cF_k(P)$ is defined by
\begin{align*}
T_F(P)=\{x\in\R^n:v+\eps x\in P\text{ for some }\eps>0\},
\end{align*}
where $v$ is any point in the relative interior of $F$. It is known that this definition does not depend on the choice of $v$. The \textit{normal cone} of $P$ at the face $F$ is defined as the dual of the tangent cone, that is
\begin{align*}
N_F(P)=T_F(P)^\circ=\{w\in\R^n:\langle w,u\rangle\le 0 \text{ for all } u\in T_F(P)\}.
\end{align*}
%There is a bijection between the faces of $C$ and $C^\circ$.
It is easy to check that given a face $F$ of a cone $C$, the corresponding normal cone $N_F(C)$ satisfies $N_F(C):=(\lin F)^\perp\cap C^\circ$, where $L^\perp$ denotes the orthogonal complement of a linear subspace $L$.
%The normal face $N(F,C)$ for $j$-face $F\in\cF_j(C)$ is an $(n-j)$-face of $C^\circ$. Note that the normal cone $N_F(C)$ of a cone $C$ at a face $F$ coincides with the normal face $N(F,C)$.

\subsection{Conic intrinsic volumes and angles of polyhedral sets}\label{section:integral_geometry}

Now let us introduce some geometric functionals of cones that we are going to consider. The following facts are mostly taken from~\cite[Section 2]{Amelunxen2017}; see also~\cite[Section~6.5]{SW08}.  At first, we define the conical intrinsic volumes which are the analogues of the usual intrinsic volumes in the setting of conic or spherical geometry.

Let $C\subset \R^n$ be a polyhedral cone, and $g$ be an $n$-dimensional standard Gaussian random vector. Then, for $k\in \{0,\ldots,n\}$, the $k$-th \textit{conic intrinsic volume} of $C$ is defined by
\begin{align*}
\upsilon_k(C):=\sum_{F\in\F_k(C)}\PP(\Pi_C(g)\in\relint F).
\end{align*}
Here, $\Pi_C$ denotes the metric projection on $C$, that is $\Pi_C(x)$ is the vector in $C$ minimizing the Euclidean distance to $x\in\R^n$.
The conic intrinsic volumes of a cone $C$ form a probability distribution on $\{0,1,\ldots,\dim C\}$, that is
\begin{align}\label{eq:intrinsic_sum1} 
\sum_{k=0}^{\dim C} \upsilon_k(C)=1\quad\text{and}\quad \upsilon_k(C)\ge 0,\quad k=0,\ldots,\dim C.
\end{align}
The Gauss-Bonnet formula~\citep[Corollary 4.4]{Amelunxen2017}  states that
\begin{align}\label{eq:intrinsic_sum2}
\sum_{k=0}^{\dim C} (-1)^k\upsilon_k(C)=0
\end{align}
for every cone $C$ which is not a linear subspace, which implies that
\begin{align}\label{eq:sum_odd_Intr_vol}
\sum_{k=1,3,5,\ldots}\upsilon_k(C)=\sum_{k=0,2,4,\ldots}\upsilon_k(C)=\frac{1}{2}.
\end{align}
Furthermore, the conic intrinsic volumes satisfy the product rule
\begin{align*}
\upsilon_k(C_1\times \ldots\times C_m)=\sum_{i_1+\ldots+i_m=k}\upsilon_{i_1}(C_1)\cdot\ldots\cdot\upsilon_{i_m}(C_m),
\end{align*}
where $C_1\times\ldots\times C_m$ is the Cartesian product of $C_1,\ldots,C_m$.
The product rule implies that the generating polynomial of the intrinsic volumes of $C$, defined by $P_C(t):=\sum_{k=0}^{\dim C}\upsilon_k(C)t^k$, satisfies
\begin{align}\label{eq:intr_vol_prod_gen_fct}
P_{C_1\times\ldots\times C_m}(t)=P_{C_1}(t)\cdot\ldots\cdot P_{C_m}(t).
\end{align}
For example,  for an $i$-dimensional linear subspace $L$, 
we have $\upsilon_k(C\times L)=\upsilon_{k-i}(C)$ for $k\geq i$. 
%since
%\begin{align*}
%\upsilon_k(L)=\begin{cases}
%1	&\text{if $k=i$,}\\
%0	&\text{if $k\neq i$.}
%\end{cases}
%\end{align*}
%holds true. 

The \textit{solid angle} (or just angle) of a cone $C\subset \R^n$ is defined as
\begin{align*}
\alpha(C):=\P(Z\in C),
\end{align*}
where $Z$ is uniformly distributed on the unit sphere in the linear hull $\lin C$. Equivalently, we can take a random vector $Z$ having a standard Gaussian distribution on the ambient linear subspace $\lin C$.
%The \textit{solid angle} (or just angle) of a $d$-dimensional cone $C\subseteq \R^d$, denoted by $\alpha(C)$, is defined as the Gaussian volume of $C$, i.e.~the probability that ad $d$-dimensional standard Gaussian vector $g$ lies in $C$. Equivalently, the solid angle of $C$ is the relative spherical volume of $C\cap\SP^{d-1}$, i.e.
%\begin{align*}
%\alpha(C)=\frac{\sigma_{d-1}(C\cap\SP^{d-1})}{\omega_d},
%\end{align*}
%where $\sigma_{d-1}$ denotes the $(d-1)$-dimensional spherical Lebesgue measure.
 %on the $(k-1)$-dimensional subsphere $\SP^{n-1}\cap \lin (C)$.
%Here, $\omega_k$ denotes the normalizing constant
%\begin{align*}
%\omega_k:=\sigma_{k-1}(\SP^{k-1})=\frac{2\pi^{k/2}}{\Gamma(k/2)}.
%\end{align*}
%This definition can be extended to lower-dimensional cones. For a cone $C$ with $\dim C=k\in\{0,\ldots,d\}$, we define
%\begin{align*}
%\alpha(C):=\frac{\sigma_{k-1}(C\cap\SP^{d-1})}{\omega_k},
%\end{align*}
%where $\sigma_{k-1}$ denotes the $(k-1)$-dimensional spherical Hausdorff measure.
%Equivalently, we obtain the solid angle of a cone $C$ with $\dim C=k$ as the probability that a random vector having a standard Gaussian %distribution on the ambient linear subspace $\lin C$ lies in $C$.
For a $d$-dimensional cone $C\subset\R^n$, where $d\in\{1,\ldots,n\}$, the $d$-th conical intrinsic volume coincides with the solid angle of $C$, that is
$$
\upsilon_d(C) = \alpha (C).
$$

The \textit{internal angle} of a polyhedral set $P$ at a face $F$ is defined as the solid angle of its tangent cone:
\begin{align*}
\beta(F,P):=\alpha(T_F(P)).
\end{align*}
The \textit{external angle} of $P$ at a face $F$ is defined as the solid angle of the normal cone of $F$ with respect to $P$, that is
\begin{align*}
\gamma(F,P):=\alpha(N_F(P)).
\end{align*}

The conic intrinsic volumes of a cone $C\subset \R^n$ can be computed in terms of the internal and external angles of its faces as follows:
\begin{align}\label{eq:in_vol_internalexternal}
\upsilon_k(C)
%&	=\sum_{F\in\F_k(C)}\beta(0,F)\gamma(F,C)	
=
\sum_{F\in\F_k(C)}\alpha(F)\alpha(N_F(C)),
\qquad
k\in \{0,\ldots,n\}.
\end{align}

Let $W_{n-k}\subset\R^n$ be random linear subspace having the uniform distribution on the Grassmann manifold of all $(n-k)$-dimensional linear subspaces. Then, following Grünbaum~\cite{gruenbaum_grass_angles}, the \textit{Grassmann angle} $\gamma_k(C)$ of a cone $C\subset\R^n$ is defined as
\begin{align}\label{eq:grassmann_angle_def}
\gamma_k(C):=\PP(W_{n-k}\cap C\neq\{0\}),
\end{align}
for $k\in \{0,\ldots,n\}$.
The Grassmann angles do not depend on the dimension of the ambient space, that is, if we embed $C$ in $\R^N$ where $N\ge n$, the Grassmann angle will be the same. If $C$ is not a linear subspace, then $\frac{1}{2}\gamma_k(C)$ is also known as the \textit{$k$-th conic quermassintegral} $U_k(C)$ of $C$, see~\cite[(1)-(4)]{HugSchneider2016}, or as the \textit{half-tail functional} $h_{k+1}(C)$, see~\cite{amelunxen_edge}. The conic intrinsic volumes and the Grassmann angles are known to satisfy the linear relation
\begin{align}\label{eq:relation_grass_intr}
\gamma_k(C)=2\sum_{i=1,3,5,\ldots}\upsilon_{k+i}(C),
\end{align}
see~\cite[(2.10)]{Amelunxen2017}, provided $C$ is not a linear subspace.

\subsection{Stirling numbers and their generating functions}\label{section:stirling_numbers}

In this section, we are going to recall the definitions of  various kinds of Stirling numbers and their generating functions. As mentioned in the introduction, these numbers appear in various results presented in this paper. 
%The generating functions will be useful in the respective proofs.

The (signless) \textit{Stirling number of the first kind} $\stirling{n}{k}$ is defined as the number of permutations of the set $\{1,\ldots,n\}$ having exactly $k$ cycles. Equivalently, these numbers can be defined as the coefficients of the polynomial
\begin{align}\label{eq:def_stirling1_polynomial}
t(t+1)\cdot\ldots \cdot (t+n-1)=\sum_{k=0}^n\stirling{n}{k}t^k
\end{align}
for $n\in\N_0$, with the convention that $\stirling{n}{k}=0$ for $n\in \N_0$, $k\notin\{0,\ldots,n\}$, and $\stirling{0}{0} = 1$. By~\cite[Equations~(1.9),(1.15)]{Pitman2006}, the Stirling numbers of the first kind can also be represented as the following sum:
\begin{align}\label{eq:rep_stirling1_sum}
\stirling{n}{k}=\frac{n!}{k!}\sum_{\substack{m_1,\ldots,m_k\in\N:\\m_1+\ldots+m_k=n}}\frac{1}{m_1\cdot\ldots\cdot m_k}.
\end{align}

The \textit{$B$-analogues of the Stirling numbers of the first kind}, denoted by $\stirlingb nk$, are defined as the coefficients of the polynomial
\begin{align}\label{eq:def_stirling1b}
(t+1)(t+3)\cdot\ldots\cdot (t+2n-1)=\sum_{k=0}^n \stirlingb nk t^k
\end{align}
for $n\in\N_0$ and, by convention, $\stirlingb nk=0$ for $k\notin\{0,\ldots,n\}$. These numbers appear as Entry A028338 in~\cite{sloane}. The exponential generating functions of the arrays $(\stirling{n}{k})_{n,k\ge 0}$ and $(\stirlingb nk)_{n,k\ge 0}$ are given by
\begin{align}\label{eq:gen_fct_stirling1}
\sum_{n=k}^\infty \stirling{n}{k}\frac{t^n}{n!}=\frac{1}{k!}\big(-\log(1-t)\big)^k,\quad \sum_{k=0}^\infty\sum_{n=k}^\infty\stirling{n}{k}\frac{t^n}{n!}y^k=(1-t)^{-y}
\end{align}
and
\begin{align}\label{eq:gen_fct_stirling1b}
\sum_{k=0}^\infty\sum_{n=k}^\infty \stirlingb nk\frac{t^n}{n!}y^k=(1-2t)^{-\frac{1}{2}(y+1)};
\end{align}
see~\cite[Proposition 2.3]{GK20_Intr_Vol} for the proof of~\eqref{eq:gen_fct_stirling1b}.

The \textit{Stirling number of the second kind} $\stirlingsec{n}{k}$  is defined as the number of partitions of the set $\{1,\ldots,n\}$ into $k$ non-empty subsets. Similarly to~\eqref{eq:rep_stirling1_sum}, the Stirling numbers of the second kind can be represented as the following sum:
\begin{align}\label{eq:rep_stirling2_sum}
\stirlingsec{n}{k}=\frac{n!}{k!}\sum_{\substack{m_1,\ldots,m_k\in\N:\\m_1+\ldots+m_k=n}}\frac{1}{m_1!\cdot\ldots\cdot m_k!};
\end{align}
see~\cite[Equations~(1.9),(1.13)]{Pitman2006}.
The \textit{$B$-analogues of the Stirling numbers of the second kind}, denoted by $\stirlingsecb nk$, are defined as
\begin{align*}
\stirlingsecb nk=\sum_{m=k}^{n}2^{m-k}\binom{n}{m}\stirlingsec{m}{k}.
\end{align*}
They appear as Entry A039755 in~\cite{sloane} and were studied by Suter~\cite{suter}.

The exponential generating functions of the arrays $(\stirlingsec{n}{k})_{n,k\ge 0}$ and $(\stirlingsecb nk)_{n,k\ge 0}$ are given by
\begin{align}\label{eq:gen_fct_stirling2}
\sum_{n=k}^\infty\stirlingsec{n}{k}\frac{t^n}{n!}=\frac{1}{k!}(e^t-1)^k,\quad\sum_{k=0}^\infty\sum_{n=k}^\infty\stirlingsec{n}{k}\frac{t^n}{n!}y^k=e^{(e^t-1)y}
\end{align}
and
\begin{align}\label{eq:gen_fct_stirling2b}
\sum_{k=0}^\infty \sum_{n=k}^\infty \stirlingsecb nk\frac{t^n}{n!}y^k=e^{\frac{y}{2}(e^{2t}-1)}e^{t};
\end{align}
see~\cite[Theorem 4]{suter} for~\eqref{eq:gen_fct_stirling2b}. The numbers $\stirlingsecb nk$ and $\stirlingsec{n}{k}$ appear as coefficients in the formulas
\begin{align*}
t^n=\sum_{k=0}^n(-1)^{n-k}\stirlingsecb nk(t+1)(t+3)\ldots(t+2k-1),\;\; t^n=\sum_{k=0}^n(-1)^{n-k}\stirlingsec{n}{k}t(t+1)\ldots(t+k-1);
\end{align*}
see Entry A039755 in~\cite{sloane} and also~\cite{bagno_biagioli_garber_some_identities,bagno_garber_balls} for combinatorial proofs of both identities, which should be compared to the formulas~\eqref{eq:def_stirling1_polynomial} and~\eqref{eq:def_stirling1b} for $\stirling{n}{k}$ and their $B$-analogues $\stirlingb nk$.

More generally, it is possible to  define the $r$-Stirling numbers of the first and the second kind. For $r\in\N_0$, the (signless) \textit{r-Stirling number of the first kind}, denoted by $\stirling{n}{k}_r$, is defined as the number of permutations of the set $\{1,\ldots,n\}$ having $k$ cycles such that the numbers $1,2,\ldots,r$ are in distinct cycles; see \cite[(1)]{Broder_1982}. The \textit{r-Stirling number of the second kind}, denoted by $\stirlingsec{n}{k}_r$, is defined as the number of partitions of the set $\{1,\ldots,n\}$ into $k$ non-empty disjoint subsets such that the numbers $1,2,\ldots,r$ are in distinct subsets; see \cite[(2)]{Broder_1982}. Obviously, for $r\in\{0,1\}$, the $r$-Stirling numbers of the first and second kind coincide with the classical Stirling numbers, respectively. The $r$-Stirling numbers were introduced by Carlitz~\cite{Carlitz1980,Carlitz2_1980} under the name weighted Stirling numbers.

The exponential generating functions in one and two variables of the $r$-Stirling numbers of the first kind are given by
\begin{align}\label{eq:gen_fct_r-stirling1}
\sum_{n=k}^\infty\stirling{n+r}{k+r}_r \frac{t^n}{n!}=\frac{1}{k!}(1-t)^{-r}\big(-\log(1-t)\big)^k,\quad
\sum_{k=0}^\infty\sum_{n=k}^\infty\stirling{n+r}{k+r}_r \frac{t^n}{n!}y^k=\Big(\frac{1}{1-t}\Big)^{r+y};
\end{align}
see \cite[Equations~(36),(37)]{Broder_1982}. For the $r$-Stirling numbers of the second kind they are given by
\begin{align}\label{eq:gen_fct_r-stirling2}
\sum_{n=k}^\infty\stirlingsec{n+r}{k+r}_r \frac{t^n}{n!}=\frac{1}{k!}e^{rt}(e^{t}-1)^k,\quad
\sum_{k=0}^\infty\sum_{n=k}^\infty\stirlingsec{n+r}{k+r}_r \frac{t^n}{n!}y^k=e^{y(e^t-1)}e^{rt};
\end{align}
see \cite[Equations~(38),(39)]{Broder_1982}.
The $r$-Stirling numbers can equivalently be defined in terms of the regular Stirling numbers by
\begin{align}\label{def:r-Stirling1}
\stirling{n}{k}_r=\sum_{m=0}^{n-k}\binom{n-r}{m}\stirling{n-r-m}{k-r}r^{\overline{m}},
\end{align}
where $r^{\overline{m}}:=r(r+1)\cdot\ldots\cdot (r+m-1)$ denotes the rising factorial, $r^{\overline{0}}:=1$, and
\begin{align}\label{def:r-Stirling2}
\stirlingsec{n}{k}_r=\sum_{m=k-r}^{n-r}\binom{n-r}{m}\stirlingsec{m}{k-r}r^{n-r-m};
\end{align}
see~\cite[Equations~(27),(32)]{Broder_1982}. This even yields an analytic continuation of the $r$-Stirling numbers to non-integer (arbitrary complex) $r$, given by
\begin{align}\label{eq:def_analy_r-stirling1}
\stirling{n+r}{k+r}_r=\sum_{m=0}^{n-k}\binom{n}{m}\stirling{n-m}{k}r^{\overline{m}}
\end{align}
and
\begin{align}\label{eq:def_analy_r-stirling2}
\stirlingsec{n+r}{k+r}_r=\sum_{m=k}^n\binom{n}{m}\stirlingsec{m}{k}r^{n-m}.
\end{align}
Note the following special values:
\begin{align*}
\stirling{n}{n}_r=\stirlingsec{n}{n}_r=1,\quad n\ge r\quad\text{and}\quad
\stirling{n}{k}_r=\stirlingsec{n}{k}_r=0,\quad k\notin\{r,\ldots,n\}.
\end{align*}
For $r=1/2$, we observe the following relations between the $r$-Stirling numbers and the numbers $\stirlingb nk$ and $\stirlingsecb nk$:
\begin{align}\label{eq:rel_1/2-Stirling_B-analogues}
\stirling{n+1/2}{k+1/2}_{1/2}=2^{k-n}\stirlingb nk,\quad \stirlingsec{n+1/2}{k+1/2}_{1/2}=2^{k-n}\stirlingsecb nk.
\end{align}
Both can  easily be verified  by comparing the generating functions.
Let us also mention that besides the $r$-Stirling numbers there is another construction, the generalized two-parameter Stirling numbers~\cite{lang_stirling} (see also~\cite{bala_stirling}) containing Stirling numbers of  both types and their $B$-analogues as special cases corresponding to the parameters $(d,a) = (1,0)$ and $(d,a) = (2,1)$, respectively. We will not use this general construction here.

\section{Main results}\label{section:main_results}
\subsection{Schl\"afli orthoschemes}\label{sec:schlaefli_orthoschemes}
The  polytopes we are interested in this paper are called \textit{Schl\"afli orthoschemes}. As mentioned in the introduction, the Schl\"afli orthoscheme of type $B$ in $\R^n$ is defined as
\begin{align*}
K_{n}^B:
=&	\conv\{(0,0,\ldots,0),(1,0,\ldots,0),(1,1,0,\ldots,0),\ldots,(1,1,\ldots,1)\}\\
=&	\{x\in\R^n:1\ge x_1\ge x_2\ge\ldots\ge x_n\ge 0\}.
\end{align*}
Note that for convenience, we set $K_0^B:=\{0\}$.

Similarly, the Schl\"afli orthoscheme of type $A$ in $\R^{n+1}$ is defined as the convex hull of the $(n+1)$-dimensional vectors $P_0,P_1,\ldots,P_{n+1}$, where $P_0=(0,0,\ldots,0)$ and
\begin{align*}
P_i=(\underbrace{1,\ldots,1}_{i},0,\ldots,0)-\frac{i}{n+1}(1,1,\ldots,1),\quad 1\le i\le n+1.
\end{align*}
It is not difficult to check  that
\begin{align*}
K_n^A=\{x\in\R^{n+1}:x_1\ge x_2\ge\ldots\ge x_{n+1},x_1-x_{n+1}\le 1, x_1+\ldots+x_{n+1}=0\}.
\end{align*}
Again, we put $K_0^A=\{0\}$. The index shift from type $B$ to type $A$ will turn out to be convenient since $K_n^A\subset\R^{n+1}$ is an $n$-dimensional polytope. In fact, the Schl\"afli orthoschemes of type $A$ and type $B$ are simplices since they are convex hulls of $n+1$ affinely independent vectors. The Schl\"afli orthoscheme of type $B$ was already considered by Gao and Vitale~\cite{GV01} who among other things evaluated the classical intrinsic volumes of $K_n^B$. Similar calculations for the Schl\"afli orthoscheme of type $A$ were made by Gao~\cite{Gao2003}.

The definition of the Schl\"afli orthoscheme can be motivated by a connection to random walks and random bridges. In fact, consider Gaussian random matrices $G_B\in\R^{d\times n}$ and $G_A\in\R^{d\times(n+1)}$, that is, random matrices having independent and standard Gaussian distributed entries. Then $G_BK_n^B$ has the same distribution as the convex hull of a $d$-dimensional random walk $S_0:=0,S_1,\ldots,S_n$ with Gaussian increments. Similarly, $G_AK_n^A$ has the same distribution as the convex hull of a $d$-dimensional Gaussian random bridge $\widetilde{S}_0:=0,\widetilde{S}_1,\ldots,\widetilde{S}_n,\widetilde{S}_{n+1}=0$ which is essentially a Gaussian random walk conditioned on the event that it returns to $0$ in the $(n+1)$-st step. We will explain these facts in Section~\ref{section:facenumbers_walks_bridges} in more detail.

%\red{Hier auch noch was zu Weyl chambers sagen?}

\subsection{Sums of conic intrinsic volumes in Weyl chambers and Schl\"afli orthoschemes}\label{section:angle_sums}
In this section, we state the main results of this paper concerning the sums of the conic intrinsic volumes of the tangent cones of Schl\"afli orthoschemes of type $A$ and $B$ and their products. The same is done for Weyl chambers of type $A$ and $B$ and their products.
%Recall that the Schl\"afli orthoscheme of type $B$ in $\R^n$, $n\in\N$, is the simplex given by
%$$
%K^B_{n}:=\{x\in\R^n:1\ge x_1\ge x_2\ge\ldots\ge x_n\ge 0\}
%$$
%and the Schl\"afli orthoscheme of type $A$ in $\R^{d+1}$ is the $d$-dimensional simplex
%\begin{align*}
%K_{n}^A=\{x\in\R^{n+1}:x_1\ge\ldots\ge x_{n+1}, x_1-x_{n+1}\le 1, x_1+\ldots+x_{n+1}=0\}.
%\end{align*}
Our first result concerning the Schl\"afli orthoschemes of types $A$ and $B$ is the following  theorem. 
\begin{theorem}\label{theorem:Schlaefli_typeB_simplex}
Let $j\in\{0,\ldots,n\}$ and $k\in\{0,\ldots,n\}$ be given. Then, it holds that
\begin{align*}
\sum_{F\in\cF_j(K^B_n)}\upsilon_k(T_{F}(K_n^B))=\sum_{F\in\cF_j(K_n^A)}\upsilon_k(T_{F}(K_n^A))=\frac{j!}{n!}\stirling{n+1}{k+1}\stirlingsec{k+1}{j+1}.
\end{align*}
\end{theorem}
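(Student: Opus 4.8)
The plan is to reduce both angle sums to a single computation about products of type-$A$ Weyl chambers, exploiting the combinatorial description of the faces of the Schl\"afli orthoschemes. First I would describe the faces of $K_n^B$: a $j$-face is obtained by freezing some of the defining inequalities $1\ge x_1\ge\cdots\ge x_n\ge 0$ to equalities, which corresponds to an ordered set partition of $\{1,\dots,n\}$ together with a choice of which consecutive blocks are set equal to $0$ or to $1$. Passing to the tangent cone at such a face, the coordinates that are frozen to $0$ or $1$ contribute a type-$A$ Weyl chamber factor (from the remaining strict-inequality constraints among them together with the active boundary inequality), and I claim $T_F(K_n^B)$ is, up to a linear isometry and a linear-subspace factor, a product $A^{(m_1)}\times\cdots\times A^{(m_r)}$ of type-$A$ Weyl chambers whose block sizes $m_1,\dots,m_r$ record the face data, with $m_1+\cdots+m_r = n+1$ (the "$+1$" accounting for the outer inequality $x_1-x_{n+1}\le 1$ in the type-$A$ picture and for the two boundary hyperplanes $x_1=1$, $x_n=0$ in the type-$B$ picture, which is exactly why the answer involves $\stirling{n+1}{k+1}$ rather than $\stirling{n}{k}$). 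The same analysis applied to $K_n^A$ yields the identical multiset of tangent cones summed over $j$-faces, which already explains the equality of the two left-hand sides.

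Second I would invoke the product rule~\eqref{eq:intr_vol_prod_gen_fct} for the generating polynomial of conic intrinsic volumes together with the known formula for the conic intrinsic volumes of a type-$A$ Weyl chamber: $\upsilon_k(A^{(m)})$ equals $\stirling{m}{k}/m!$ (equivalently, $P_{A^{(m)}}(t) = t(t+1)\cdots(t+m-1)/m!$, from~\eqref{eq:def_stirling1_polynomial} and the cited \cite[Theorem 4.2]{KVZ15} or \cite[Theorem 1.1]{GK20_Intr_Vol}). Hence $\upsilon_k$ of a tangent cone at a face indexed by block sizes $(m_1,\dots,m_r)$ is the coefficient-convolution $\sum_{i_1+\cdots+i_r=k}\prod_\ell \stirling{m_\ell}{i_\ell}/m_\ell!$. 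Summing over all $j$-faces then amounts to: fix an ordered composition $(m_1,\dots,m_r)$ of $n+1$ with $r = j+1$ parts (the number of blocks is $j+1$ because each block boundary drops the dimension by one, starting from dimension $n$... more precisely, a $j$-face corresponds to $r=j+1$ blocks in the type-$A$ bookkeeping), count how many faces give that composition (which is the multinomial factor $\binom{n+1}{m_1,\dots,m_{j+1}}$ divided by symmetry, or simply $\binom{n}{m_1-1, \dots}$-type bookkeeping — I would pin this down carefully), and sum.

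Third, the resulting sum should be recognized as a Stirling convolution. Using the sum representation~\eqref{eq:rep_stirling1_sum} for $\stirling{m_\ell}{i_\ell}$ and~\eqref{eq:rep_stirling2_sum}, or more cleanly the exponential generating functions~\eqref{eq:gen_fct_stirling1} and~\eqref{eq:gen_fct_stirling2}, one gets
\begin{align*}
\sum_{F\in\cF_j(K_n^B)}\upsilon_k(T_F(K_n^B)) = \frac{1}{n!}\,[\text{coeff extraction}]\!\left(\sum_{m_1+\cdots+m_{j+1}=n+1}\binom{n+1}{m_1,\dots,m_{j+1}}\prod_\ell \stirling{m_\ell}{i_\ell},\ \sum i_\ell = k\right),
\end{align*}
and the inner multinomial sum over compositions with a fixed total number of cycles $k$ is, by the EGF identity $\bigl(\tfrac{1}{k!}(-\log(1-t))^k\bigr)$ raised appropriately, equal to $(j+1)!\,\stirling{n+1}{k+1}\stirlingsec{k+1}{j+1}/(k+1)!$ after dividing out the $\tfrac{1}{k!}(e^t-1)^{\bullet}$ factor coming from distributing the $k$ cycles among $j+1$ ordered blocks and then converting "ordered onto maps" into a Stirling-second-kind factor $\stirlingsec{k+1}{j+1}$. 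Concretely: $\sum_{i_1+\cdots+i_{j+1}=k}\prod_\ell \stirling{m_\ell}{i_\ell}$ summed against the multinomial coefficients gives $\stirling{n+1}{\star}$-type numbers, and the bookkeeping of "which block each of the $k+1$ relevant cycle-slots lands in, with each block nonempty" produces exactly $\stirlingsec{k+1}{j+1}$ with the factor $(j+1)!$, while the shift from $n$ to $n+1$ and $k$ to $k+1$ comes from the extra global constraint as noted above. I would carry this out either by the EGF route (cleanest: multiply~\eqref{eq:gen_fct_stirling1} by itself $j+1$ times with an extra bookkeeping variable, then read off coefficients, and compare with~\eqref{eq:gen_fct_stirling2}) or by the explicit $r$-Stirling identities already recorded in Section~\ref{section:stirling_numbers}.

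The main obstacle I anticipate is not the algebra but Step one: correctly identifying the tangent cone $T_F(K_n^B)$ (and $T_F(K_n^A)$) at an arbitrary $j$-face as a product of type-$A$ Weyl chambers with the right block-size bookkeeping, and especially getting the index shift ($n\to n+1$, $j\to j+1$, $k\to k+1$) right — i.e. understanding precisely why the two "cap" hyperplanes $x_1=1$ and $x_n=0$ (type $B$), respectively the single hyperplane $x_1-x_{n+1}=1$ together with the hyperplane $\sum x_i = 0$ (type $A$), jointly contribute one extra unit everywhere. Once the structural identification $T_F(\cdot)\cong A^{(m_1)}\times\cdots\times A^{(m_{j+1})}\times(\text{linear subspace})$ with $\sum m_\ell = n+1$ is established, together with the count of faces realizing a given composition, the rest is the Stirling-convolution computation above, which Proposition~\ref{prop:sum_intr_vol} (cited in the excerpt) is presumably designed to package. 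I would therefore structure the proof as: (i) face/tangent-cone description for type $B$; (ii) the parallel description for type $A$, giving the same multiset and hence the equality of the two sums; (iii) application of the product rule and the type-$A$ Weyl chamber intrinsic-volume formula; (iv) the generating-function identification of the resulting sum with $\tfrac{j!}{n!}\stirling{n+1}{k+1}\stirlingsec{k+1}{j+1}$.
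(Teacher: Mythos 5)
Your overall strategy (tangent cones as products of Weyl chambers, product rule for conic intrinsic volumes, then a Stirling/EGF convolution) is the right one, but Step one contains a genuine error that breaks the $B$-case. The tangent cone of $K_n^B$ at a $j$-face is \emph{not} a product of type-$A$ Weyl chambers up to a linear-subspace factor: the blocks frozen at $1$ and at $0$ contribute type-$B$ chambers, so the correct description (Lemma~\ref{lemma:tangent_cones_typeB}) is $B^{(l_0)}\times A^{(l_1)}\times\cdots\times A^{(l_j)}\times B^{(l_{j+1})}$ with $l_0,l_{j+1}\in\N_0$, $l_1,\ldots,l_j\in\N$, $l_0+\cdots+l_{j+1}=n$. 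A concrete counterexample to your identification: at the vertex $(0,\ldots,0)$ of $K_n^B$ the tangent cone is $B^{(n)}$, whose solid angle is $2^{-n}/n!$, which cannot be realized by any product of $A$-chambers modulo lines (already for $n=2$: $1/8$ versus $1/6$, $1/4$, $1/2$). Consequently the convolution you set up, involving only $\stirling{m_\ell}{i_\ell}/m_\ell!$ factors, does not compute $\sum_F\upsilon_k(T_F(K_n^B))$; you need the $B$-chamber intrinsic volumes $\stirlingb{m}{i}/(2^m m!)$, whose generating function $(1-x)^{-(t+1)/2}$ is exactly what produces the shift $n\to n+1$, $k\to k+1$ (this is the paper's Proposition~\ref{prop:sum_intr_vol} with $b=2$, i.e.\ $r$-Stirling numbers with $r=1$), not an extra $A$-block coming from the cap hyperplanes. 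Relatedly, your claim that the $A$- and $B$-cases give the identical multiset of tangent cones is false: for $n=2$, $j=0$ the internal angles of $K_2^B$ are $\{1/8,1/8,1/4\}$ while $K_2^A$ is an equilateral triangle with angles $\{1/6,1/6,1/6\}$. The equality of the two sums is an output of the computation (the $b=2$ and $b=0$ cases of Proposition~\ref{prop:sum_intr_vol} happening to give the same answer after the appropriate shifts), not a cone-by-cone bijection.

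A smaller but still essential gap is the face count in the $A$-case. After passing to $\widetilde K_n^A$, the tangent cones at $(j+1)$-faces are indeed products $A^{(l_1)}\times\cdots\times A^{(l_{j+1})}$ with $l_1+\cdots+l_{j+1}=n+1$, but the correspondence with compositions is not uniform: each composition occurs with multiplicity equal to its first part $l_1$, because the two merged blocks created by the active constraint $x_1-x_{n+1}=1$ can be split in $l_1-1$ ways (Lemma~\ref{lemma:collection_tangentcones_A}); the paper then symmetrizes so that each composition is counted with weight $l_1+\cdots+l_{j+1}=n+1$. Your ``multinomial factor to be pinned down'' would have to reproduce exactly this weighting, and also account for the lineality line $L_{n+1}$ via $\upsilon_k(T_F(K_n^A))=\upsilon_{k+1}(T_F(K_n^A)\oplus L_{n+1})$. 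As written, the plan would at best verify the $A$-case after these corrections, and it cannot yield the $B$-case without replacing the pure type-$A$ decomposition by the mixed $A$/$B$ decomposition and the corresponding $B$-Stirling generating functions.
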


As a consequence, we can derive formulas for the sums of the internal and external angles of $K_n^B$ and $K_n^A$ at their $j$-faces $F$.

\begin{corollary} For $j\in\{0,\ldots,n\}$, the sum of the internal angles is given by
\begin{align*}
\sum_{F\in\cF_j(K_n^B)}\alpha\big(T_F(K_n^B)\big)=\sum_{F\in\cF_j(K_n^A)}\alpha\big(T_F(K_n^A)\big)=\frac{j!}{n!}\stirlingsec{n+1}{j+1},
\end{align*}
while the sum of the external angles is given  by
\begin{align*}
\sum_{F\in\cF_j(K_n^B)}\alpha\big(N_F(K_n^B)\big)=\sum_{F\in\cF_j(K_n^A)}\alpha\big(N_F(K_n^A)\big)=\frac{j!}{n!}\stirling{n+1}{j+1}.
\end{align*}
\end{corollary}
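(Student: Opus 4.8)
The plan is to derive both angle-sum formulas directly from Theorem~\ref{theorem:Schlaefli_typeB_simplex} by specializing the index $k$ to two particular values and invoking two standard facts about conic intrinsic volumes. Throughout, let $P$ stand for either $K_n^B$ (full-dimensional in $\R^n$) or $K_n^A$ (which is $n$-dimensional inside its affine hull, the hyperplane $\{x_1+\ldots+x_{n+1}=0\}$); in both cases $P$ is an $n$-dimensional polytope sitting in an $n$-dimensional space, and I work with all conic notions inside that $n$-dimensional ambient space. Since the right-hand sides of both claimed identities depend only on $n$ and $j$, the asserted equality of the type-$A$ and type-$B$ sums is automatic once each sum is matched to the Theorem, so it needs no separate argument.

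First I would treat the internal angles. For a $j$-face $F$ the tangent cone $T_F(P)$ is $n$-dimensional, since near a relative-interior point of $F$ the polytope is full-dimensional; hence the relation $\upsilon_d(C)=\alpha(C)$ for a $d$-dimensional cone gives $\alpha(T_F(P))=\upsilon_n(T_F(P))$. Setting $k=n$ in Theorem~\ref{theorem:Schlaefli_typeB_simplex} and using $\stirling{n+1}{n+1}=1$ (the leading coefficient in~\eqref{eq:def_stirling1_polynomial}) yields
\[
\sum_{F\in\cF_j(P)}\alpha(T_F(P))=\sum_{F\in\cF_j(P)}\upsilon_n(T_F(P))=\frac{j!}{n!}\stirling{n+1}{n+1}\stirlingsec{n+1}{j+1}=\frac{j!}{n!}\stirlingsec{n+1}{j+1}.
\]

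For the external angles I would pass through the dual cone. The normal cone $N_F(P)=T_F(P)^\circ$ at a $j$-face is $(n-j)$-dimensional, so $\alpha(N_F(P))=\upsilon_{n-j}(N_F(P))$. Applying the duality relation for conic intrinsic volumes, $\upsilon_k(C)=\upsilon_{n-k}(C^\circ)$ (valid in the $n$-dimensional ambient space; standard, see e.g.~\cite{Amelunxen2017,SW08}), to $C=N_F(P)$ with $C^\circ=T_F(P)$ gives $\alpha(N_F(P))=\upsilon_{n-j}(N_F(P))=\upsilon_j(T_F(P))$. Setting $k=j$ in Theorem~\ref{theorem:Schlaefli_typeB_simplex} and using $\stirlingsec{j+1}{j+1}=1$ then produces
\[
\sum_{F\in\cF_j(P)}\alpha(N_F(P))=\sum_{F\in\cF_j(P)}\upsilon_j(T_F(P))=\frac{j!}{n!}\stirling{n+1}{j+1}\stirlingsec{j+1}{j+1}=\frac{j!}{n!}\stirling{n+1}{j+1}.
\]

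The routine parts are the two Stirling evaluations $\stirling{n+1}{n+1}=1$ and $\stirlingsec{j+1}{j+1}=1$. The points to check with care are the dimension bookkeeping, namely $\dim T_F(P)=n$ and $\dim N_F(P)=n-j$, and, in the type-$A$ case, the legitimacy of working inside the hyperplane $\{x_1+\ldots+x_{n+1}=0\}$: the solid angle depends only on the linear hull of the cone, and for a linear subspace $L$ one has $\alpha(C\times L)=\alpha(C)$ by the product rule (since $\upsilon_k(C\times L)=\upsilon_{k-i}(C)$), so the spurious line $(\lin K_n^A)^\perp$ that would enter $N_F(K_n^A)$ if the dual were taken in $\R^{n+1}$ does not affect $\alpha(N_F(K_n^A))$. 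The only genuine ingredient beyond the Theorem itself is the conic-intrinsic-volume duality $\upsilon_k(C)=\upsilon_{n-k}(C^\circ)$, which is what converts the external-angle sum into the case $k=j$ of the Theorem.
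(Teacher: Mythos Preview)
Your proposal is correct and follows essentially the same route as the paper: specialize Theorem~\ref{theorem:Schlaefli_typeB_simplex} to $k=n$ for the internal-angle sum and to $k=j$ for the external-angle sum, using the duality $\upsilon_k(C)=\upsilon_{n-k}(C^\circ)$ (the paper phrases the latter via the $j$-dimensional lineality space of $T_F(P)$, which is equivalent). Your extra remark about the type-$A$ ambient space is a welcome clarification that the paper leaves implicit.
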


\begin{proof}
The sums of the internal angles follow from Theorem~\ref{theorem:Schlaefli_typeB_simplex} with $k=n$, since $K_n^B$ and $K_n^A$ are both $n$-dimensional polytopes.
In the case of the external angles, we use that the maximal linear subspaces contained in both $T_F(K_n^B)$ and $T_F(K_n^A)$ are $j$-dimensional, which implies that   $\upsilon_j(T_F(K_n^B))= \upsilon_{n-j}((T_F(K_n^B))^\circ) = \upsilon_{n-j}(N_F(K_n^B)) = \alpha(N_F(K_n^B))$ and similarly for $K_n^A$. Using Theorem~\ref{theorem:Schlaefli_typeB_simplex} with $k=j$ completes the proof.
\end{proof}

We obtain similar results for the tangent cones of Weyl chambers of types $A$ and $B$, which are the fundamental domains of the reflection groups of the respective type; see, e.g., \cite{humphreys_book}.   More concretely, a \textit{Weyl chamber of type $B$} (or $B_{n}$) is the polyhedral cone
\begin{align*}
B^{(n)}:=\{x\in\R^n:x_1\ge x_2\ge \ldots\ge x_n\ge 0\}.
\end{align*}
The \textit{Weyl chamber of type $A$} (or $A_{n-1}$) is the polyhedral cone
\begin{align*}
A^{(n)}:=\{x\in\R^n:x_1\ge x_2\ge \ldots\ge x_n\}.
\end{align*}
We set $B^{(0)}=A^{(0)}=\{0\}$ for convenience.

\begin{theorem}\label{theorem:B-chamber}
Let $j\in\{0,\ldots,n\}$ and $k\in\{0,\ldots,n\}$ be given. Then, it holds that
\begin{align*}
\sum_{F\in\cF_j(B^{(n)})}\upsilon_k(T_{F}(B^{(n)}))
&=\frac{j!}{n!}\stirling{n+1/2}{k+1/2}_{1/2}\stirlingsec{k+1/2}{j+1/2}_{1/2}
=\frac{2^jj!}{2^nn!}\stirlingb nk \stirlingsecb kj,\\
\sum_{F\in\cF_j(A^{(n)})}\upsilon_k(T_{F}(A^{(n)}))&=\frac{j!}{n!}\stirling{n}{k}\stirlingsec{k}{j}.
\end{align*}
\end{theorem}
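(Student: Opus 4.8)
The plan is to reduce the angle sums for Weyl chambers to a combinatorial identity about their faces and the conic intrinsic volumes of the tangent cones, and then to recognize the resulting sums via the generating-function identities for ($r$-)Stirling numbers collected in Section~\ref{section:stirling_numbers}. The starting point is the well-known combinatorial description of the faces of $A^{(n)}$ and $B^{(n)}$: a face of $A^{(n)}$ of dimension $\ell$ corresponds to an ordered set partition of $\{1,\dots,n\}$ into $\ell$ blocks (the blocks record which coordinates are forced equal, and their order records the chain of inequalities), while the tangent cone of $A^{(n)}$ at such a face is, up to a linear subspace of dimension $n-\ell$, a product $A^{(m_1)}\times\cdots\times A^{(m_\ell)}$ where $m_1,\dots,m_\ell$ are the block sizes. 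For $B^{(n)}$ the same holds but with a distinguished block (the coordinates forced to be $0$), so that a face of dimension $\ell$ corresponds to choosing a subset $S\subseteq\{1,\dots,n\}$ to be set to zero and an ordered partition of the complement into $\ell$ blocks; the tangent cone is then $A^{(m_1)}\times\cdots\times A^{(m_\ell)}\times B^{(|S|)}$ times a linear subspace.

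With this in hand, I would first invoke the known formulas for the conic intrinsic volumes of the Weyl chambers themselves — e.g.\ \cite[Theorem 4.2]{KVZ15} or \cite[Theorem 1.1]{GK20_Intr_Vol} — which express $\upsilon_k(A^{(m)})$ and $\upsilon_k(B^{(m)})$ in terms of $\stirling{m}{k}$ and $\stirlingb{m}{k}$ respectively. Combining these via the product rule~\eqref{eq:intr_vol_prod_gen_fct} (and the shift rule $\upsilon_k(C\times L)=\upsilon_{k-i}(C)$ for an $i$-dimensional subspace $L$), the left-hand sum over all $j$-faces becomes a multi-sum over ordered partitions of $\{1,\dots,n\}$ (respectively, over a zero-set $S$ together with an ordered partition of its complement) into blocks, weighted by a product of Stirling numbers of the first kind of the block sizes, with the total number of cycles across the blocks summing to (a shift of) $k$. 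The count $j!/(j!)$-type prefactors and the passage from ordered to unordered partitions introduce the factor $j!$ and a $1/j!$ that is absorbed into the Stirling number of the second kind.

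The decisive step is then to recognize this multi-sum. I expect to use the generating-function machinery: the exponential generating function of $\stirling{n}{k}$ in the ``cycle-marking'' variable is $\tfrac1{k!}(-\log(1-t))^k$ by~\eqref{eq:gen_fct_stirling1}, and raising it to the $j$-th power (for the $j$ blocks) and extracting coefficients produces exactly $\tfrac1{n!}\stirlingsec{k}{j}\stirling{n}{k}$-type expressions after using the generating function~\eqref{eq:gen_fct_stirling2} for the Stirling numbers of the second kind, $\tfrac1{j!}(e^s-1)^j$, with $s=-\log(1-t)$ — the composition $e^{-\log(1-t)}-1 = t/(1-t)$ being the source of the further Stirling-of-the-second-kind layer. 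For type $B$ the extra factor $\stirlingb{|S|}{\cdot}$ combined with the binomial sum over $|S|$ is precisely the definition of the $\tfrac12$-shifted $r$-Stirling numbers (cf.~\eqref{eq:def_analy_r-stirling1}, \eqref{eq:def_analy_r-stirling2} and the relations~\eqref{eq:rel_1/2-Stirling_B-analogues}); the generating function~\eqref{eq:gen_fct_stirling1b} does the same bookkeeping with $y$ specialized appropriately, yielding the $r=1/2$ form, and the second equality in the theorem is then just~\eqref{eq:rel_1/2-Stirling_B-analogues} together with the definition of $\stirlingsecb{k}{j}$.

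The main obstacle, I anticipate, is the precise handling of the orderings and of the linear-subspace parts of the tangent cones: one must be careful that each $j$-face is counted exactly once (ordered partitions of block sizes $m_1+\dots+m_j=n-|S|$, but $A^{(m)}$ already contains a $1$-dimensional lineality line, which must be tracked so that the total dimension is $j$ rather than $j+(\text{number of blocks})$), and that the $j!$ in the numerator really is the discrepancy between summing over ordered compositions and the symmetric-function form that the generating functions produce. A clean way around this — which I would take as the alternative route promised in Section~\ref{section:method_of_proof} — is to prove the general Proposition~\ref{prop:sum_intr_vol} for arbitrary products of Weyl chambers once and for all by this generating-function argument, and then obtain Theorem~\ref{theorem:B-chamber} as the special case where the product has a single factor, so that the orderings are dealt with uniformly and the two displayed formulas drop out by specializing the parameters to type $A$ and type $B$ respectively.
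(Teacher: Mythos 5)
Your overall strategy is the paper's own: describe the tangent cones at the $j$-faces as products of Weyl chambers, feed in the known conic intrinsic volumes $\upsilon_m(A^{(l)})=\stirling{l}{m}/l!$ and $\upsilon_m(B^{(l)})=\stirlingb lm/(2^l l!)$ via the product rule, and evaluate the resulting sum by generating functions — indeed the paper does exactly this by first proving Proposition~\ref{prop:sum_intr_vol} and then specializing ($b=1$ for $B^{(n)}$, $b=0$ for $A^{(n)}$). But there is a genuine gap at the very first step: your combinatorial model of the faces is wrong. The $j$-faces of the single chamber $A^{(n)}=\{x_1\ge\cdots\ge x_n\}$ are \emph{not} indexed by ordered set partitions of $\{1,\ldots,n\}$ into $j$ blocks; because the coordinates are linearly ordered, the blocks of equal coordinates are forced to be consecutive intervals, so the $j$-faces correspond to compositions $l_1+\cdots+l_j=n$ (there are $\binom{n-1}{j-1}$ of them, not $j!\stirlingsec{n}{j}$), with tangent cone isometric to $A^{(l_1)}\times\cdots\times A^{(l_j)}$ and no additional linear-subspace factor (the product is already $n$-dimensional and its lineality space has dimension exactly $j$). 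Likewise for $B^{(n)}$ the zero set is forced to be a terminal block $\{i_j+1,\ldots,n\}$, not an arbitrary subset $S$, so the $j$-faces correspond to $l_1+\cdots+l_j+l_{j+1}=n$ with $l_1,\ldots,l_j\in\N$, $l_{j+1}\in\N_0$, and tangent cone $A^{(l_1)}\times\cdots\times A^{(l_j)}\times B^{(l_{j+1})}$ (there are $\binom nj$ faces). If you sum $\upsilon_k$ of the tangent cones over your larger index set (arbitrary blocks, arbitrary $S$), each composition gets counted with a multinomial multiplicity $\binom{n}{l_1,\ldots}$ depending on the block sizes; this cannot be repaired by an overall $j!$ versus $1/j!$ bookkeeping, and the resulting quantity is not the one in the theorem — it is essentially the sum over the faces of the full reflection-arrangement fan, i.e.\ the Amelunxen--Lotz setting that the paper explicitly distinguishes from its results (the Stirling numbers there enter in a different order).

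A secondary issue is the generating-function bookkeeping: you use a single variable $t$ both to mark the sizes and the dimensions (``$s=-\log(1-t)$'', ``$e^{-\log(1-t)}-1=t/(1-t)$''), whereas the computation needs two variables, as in the paper's proof of Proposition~\ref{prop:sum_intr_vol}: one sums $\sum_{l\ge1}\frac{t^{\overline l}}{l!}x^l=(1-x)^{-t}-1$ and $\sum_{l\ge0}\frac{\stirlingb lm\text{-polynomial}}{2^l l!}x^l=(1-x)^{-\frac12(t+1)}$, takes the $j$-th power of the first times the second, and extracts $[t^k][x^n]$ using the $r$-Stirling generating functions with $r=\tfrac12$ (the ``binomial sum over $|S|$'' you invoke does not occur; the $\tfrac12$-shift comes from the factor $(1-x)^{-1/2}$). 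Once you replace your face model by the composition description above (the paper's Lemmas~\ref{lemma:B-chamber_tangent_cones} and~\ref{lemma:A-chamber_tangent_cones}) and run the two-variable computation, your plan does reduce Theorem~\ref{theorem:B-chamber} to Proposition~\ref{prop:sum_intr_vol} exactly as in the paper, with the second displayed equality given by~\eqref{eq:rel_1/2-Stirling_B-analogues}.
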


Theorems~\ref{theorem:Schlaefli_typeB_simplex} and~\ref{theorem:B-chamber}  are special cases of Proposition~\ref{prop:sum_intr_vol}  which we shall state in Section~\ref{section:method_of_proof} and which gives  a formula for sums of the conic intrinsic volumes of a mixed product of Weyl chambers of both types $A$ and $B$. We will give an application of  Theorems~\ref{theorem:Schlaefli_typeB_simplex} and~\ref{theorem:B-chamber} to a problem of compressed sensing in Section~\ref{section:compressed_sensing}.

It has been pointed to us by an unknown referee that there is a  similarity between Theorem~\ref{theorem:B-chamber} and the formulas, derived by Amelunxen and Lotz~\cite[Section~6.1.3]{Amelunxen2017}, for the sums of the $k$-th conic intrinsic volumes of the $j$-dimensional faces in the fans of reflection arrangements. Despite the seeming similarity between the formulas, no direct connection between the quantities under interest seems to exist, the proofs are different and, in fact, even the right-hand sides of the formulas include Stirling numbers in a different order and cannot be reduced to each other in a simple way.

For $k=n$ and $k=j$, Theorem~\ref{theorem:B-chamber} yields the following corollary on the sums of the internal and external angles of $B^{(n)}$ and $A^{(n)}$.

\begin{corollary} For $j\in\{0,\ldots,n\}$, the sums of the internal angles of $B^{(n)}$ and $A^{(n)}$ are given by
\begin{align*}
\sum_{F\in\cF_j(B^{(n)})}\alpha\big(T_F(B^{(n)})\big)=\frac{2^jj!}{2^nn!}\stirlingsecb nj,\quad\sum_{F\in\cF_j(A^{(n)})}\alpha\big(T_F(A^{(n)})\big)=\frac{j!}{n!}\stirlingsec{n}{j},
\end{align*}
while the sums of the external angles is given by
\begin{align*}
\sum_{F\in\cF_j(B^{(n)})}\alpha\big(N_F(B^{(n)})\big)=\frac{2^jj!}{2^nn!}\stirlingb nj,\quad\sum_{F\in\cF_j(A^{(n)})}\alpha\big(N_F(A^{(n)})\big)=\frac{j!}{n!}\stirling{n}{j}.
\end{align*}
\end{corollary}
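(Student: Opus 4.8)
The plan is to obtain all four identities as immediate specializations of Theorem~\ref{theorem:B-chamber}, in precise analogy with the way the corresponding statement for Schl\"afli orthoschemes was deduced from Theorem~\ref{theorem:Schlaefli_typeB_simplex}. For the internal angles I would first note that since $B^{(n)}$ and $A^{(n)}$ are $n$-dimensional cones in $\R^n$, every tangent cone $T_F(B^{(n)})$ contains $B^{(n)}$ (for a face $F$ of a cone $C$ and $v\in\relint F$ one has $v+\eps x\in C$ for $x\in C$ and small $\eps>0$, since $C$ is a convex cone), hence $T_F(B^{(n)})$ is again $n$-dimensional, and likewise for $T_F(A^{(n)})$. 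By the identity $\upsilon_n(C)=\alpha(C)$ for $n$-dimensional cones $C\subset\R^n$ recalled in Section~\ref{section:integral_geometry}, we get $\alpha(T_F(B^{(n)}))=\upsilon_n(T_F(B^{(n)}))$. Applying Theorem~\ref{theorem:B-chamber} with $k=n$ and using the leading-coefficient values $\stirlingb nn=1$ and $\stirling nn=1$ coming from \eqref{eq:def_stirling1b} and \eqref{eq:def_stirling1_polynomial}, the formulas collapse to $\frac{2^jj!}{2^nn!}\stirlingsecb nj$ and $\frac{j!}{n!}\stirlingsec nj$, as claimed.

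For the external angles I would use that, for a $j$-dimensional face $F$ of a polyhedral cone, the maximal linear subspace contained in the tangent cone $T_F(B^{(n)})$ (resp.\ $T_F(A^{(n)})$) is $\lin F$, which has dimension $j$; consequently the normal cone $N_F=T_F^\circ$ is $(n-j)$-dimensional. Combining $\alpha(N_F)=\upsilon_{n-j}(N_F)$ with the duality relation $\upsilon_k(C)=\upsilon_{n-k}(C^\circ)$ (the same device used in the proof of the preceding corollary) gives $\alpha(N_F(B^{(n)}))=\upsilon_{n-j}(N_F(B^{(n)}))=\upsilon_{n-j}(T_F(B^{(n)})^\circ)=\upsilon_j(T_F(B^{(n)}))$, and analogously for type $A$. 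Now Theorem~\ref{theorem:B-chamber} with $k=j$, together with $\stirlingsecb jj=1$ (which follows from $\stirlingsecb jj=\sum_{m=j}^j 2^{m-j}\binom jm\stirlingsec mj=\stirlingsec jj=1$) and $\stirlingsec jj=1$, yields $\frac{2^jj!}{2^nn!}\stirlingb nj$ and $\frac{j!}{n!}\stirling nj$.

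The only genuine point requiring care — and thus the main obstacle — is the dimension bookkeeping: verifying that $\dim T_F(B^{(n)})=n$ (used in the internal-angle step) and that the lineality space of $T_F(B^{(n)})$ is exactly $\lin F$, of dimension $j$ (used in the external-angle step, to ensure $\dim N_F=n-j$). Both are standard facts about faces of polyhedral cones; in the present setting they can also be read off from the explicit description of the tangent cones of Weyl chambers as products of lower-dimensional Weyl chambers of types $A$ and $B$ (as used elsewhere in the paper), where $\lin F$ is precisely the linear factor of the product. Once this is in place, each of the four formulas is a one-line evaluation of Theorem~\ref{theorem:B-chamber}.
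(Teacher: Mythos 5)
Your proposal is correct and follows exactly the route the paper intends: the paper derives this corollary by specializing Theorem~\ref{theorem:B-chamber} at $k=n$ (internal angles, using $\stirlingb nn=\stirling nn=1$) and $k=j$ (external angles, using $\stirlingsecb jj=\stirlingsec jj=1$ together with the duality $\upsilon_j(T_F)=\upsilon_{n-j}(N_F)=\alpha(N_F)$), precisely as in its proof of the analogous corollary for the Schl\"afli orthoschemes. Your added dimension bookkeeping (full-dimensionality of the tangent cones and lineality space $\lin F$) is the same justification the paper invokes, just spelled out in slightly more detail.
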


\subsubsection*{Finite products of Schl\"afli orthoschemes and Weyl chambers}
The above theorems can be extended to finite products of Schl\"afli orthoschemes and Weyl chambers. Let $b\in\N$ and define $K^B:=K_{n_1}^B\times\ldots\times K_{n_b}^B$ and $K^A:=K_{n_1}^A\times\ldots\times K_{n_b}^A$ for $n_1,\ldots,n_b\in\N_0$ with $n:=n_1+\ldots+n_b$. Furthermore, for $d\in\{0,\frac{1}{2},1\}$, let
\begin{align*}
&R_d\big(k,j,b,(n_1,\ldots,n_b)\big)\\
&	\quad:=\big[t^k\big]\big[x_1^{n_1}\cdot\ldots\cdot x_b^{n_b}\big]\big[u^j\big]\frac{(1-x_1)^{-d(t+1)}\cdot\ldots\cdot (1-x_b)^{-d(t+1)}}{\big(1-u((1-x_1)^{-t}-1)\big)\cdot\ldots\cdot \big(1-u((1-x_b)^{-t}-1)\big)}.
\end{align*}
Here, $[t^N]f(t):=\frac{1}{N!}f^{(N)}(0)$ denotes the coefficient of $t^N$ in the Taylor expansion of a function $f:\R\to\R$ around $0$ and
$$
\big[x_1^{N_1}\cdot\ldots\cdot x_b^{N_b}\big]g(x_1,\ldots,x_b):=\frac{1}{N_1!\cdot\ldots\cdot N_b!}\frac{\partial^{N_1+\ldots+N_b}}{\partial x_1^{N_1}\ldots\partial x_b^{N_b}}g(0,\ldots,0)
$$
is the coefficient of $x_1^{N_1}\cdot\ldots\cdot x_b^{N_b}$ in the multidimensional Taylor expansion of a function $g:\R^b\to\R$. Note that $R_d\big(k,j,b,(n_1,\ldots,n_b)\big)=0$ for $k<j$.

\begin{theorem}\label{theorem:prod_typeB_simplex}
Let $j\in\{0,\ldots,n\}$ and $k\in\{0,\ldots,n\}$ be given. Then, it holds that
\begin{align*}
\sum_{F\in\cF_j(K^B)}\upsilon_k(T_F(K^B))=\sum_{F\in\cF_j(K^A)}\upsilon_k(T_F(K^A))=R_1\big(k,j,b,(n_1,\ldots,n_b)\big).
\end{align*}
\end{theorem}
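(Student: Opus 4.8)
The plan is to deduce Theorem~\ref{theorem:prod_typeB_simplex} from the single-orthoscheme case, Theorem~\ref{theorem:Schlaefli_typeB_simplex}, by exploiting the multiplicativity of all the objects involved, and then to recognise the resulting combinatorial sum as the coefficient extraction $R_1$.

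\textbf{Step 1: reduction to a single orthoscheme.} Every $j$-face of $K^B=K_{n_1}^B\times\dots\times K_{n_b}^B$ has the form $F=F_1\times\dots\times F_b$ with $F_i\in\cF_{j_i}(K_{n_i}^B)$ and $j_1+\dots+j_b=j$, and the tangent cone factors accordingly as $T_F(K^B)=T_{F_1}(K_{n_1}^B)\times\dots\times T_{F_b}(K_{n_b}^B)$. Combining this with the product rule for conic intrinsic volumes and interchanging the finitely many sums, I would obtain
\begin{align*}
\sum_{F\in\cF_j(K^B)}\upsilon_k(T_F(K^B))
&=\sum_{\substack{j_1+\dots+j_b=j\\ k_1+\dots+k_b=k}}\ \prod_{i=1}^b\Big(\sum_{F_i\in\cF_{j_i}(K_{n_i}^B)}\upsilon_{k_i}(T_{F_i}(K_{n_i}^B))\Big)\\
&=\sum_{\substack{j_1+\dots+j_b=j\\ k_1+\dots+k_b=k}}\ \prod_{i=1}^b\frac{j_i!}{n_i!}\stirling{n_i+1}{k_i+1}\stirlingsec{k_i+1}{j_i+1},
\end{align*}
where the last equality is Theorem~\ref{theorem:Schlaefli_typeB_simplex}. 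The same computation with $A$ in place of $B$, together with the equality of the type-$A$ and type-$B$ single-orthoscheme sums already recorded in Theorem~\ref{theorem:Schlaefli_typeB_simplex}, reduces the whole statement to matching the right-hand side above with $R_1$.

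\textbf{Step 2: a generating-function identity for one orthoscheme.} Next I would prove that for all $N,\ell,m\ge 0$,
\begin{align*}
\frac{m!}{N!}\stirling{N+1}{\ell+1}\stirlingsec{\ell+1}{m+1}
=[t^\ell]\,[x^N]\,[u^m]\,\frac{(1-x)^{-(t+1)}}{1-u\big((1-x)^{-t}-1\big)}.
\end{align*}
Extracting $[u^m]$ leaves $(1-x)^{-(t+1)}\big((1-x)^{-t}-1\big)^m$; expanding the second factor binomially as $\sum_{i=0}^m\binom{m}{i}(-1)^{m-i}(1-x)^{-ti}$ and invoking the $r=1$ instance of the two-variable $r$-Stirling generating function \eqref{eq:gen_fct_r-stirling1}, which gives $[x^N](1-x)^{-(1+y)}=\frac{1}{N!}\sum_{q\ge 0}\stirling{N+1}{q+1}y^q$ with $y=t(i+1)$, one finds after extracting $[t^\ell]$ that the right-hand side equals $\frac{1}{N!}\stirling{N+1}{\ell+1}\sum_{i=0}^m\binom{m}{i}(-1)^{m-i}(i+1)^\ell$. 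It then remains to check that this alternating sum equals $m!\,\stirlingsec{\ell+1}{m+1}$, which follows from the classical finite-difference (surjection) formula $\stirlingsec{\ell+1}{m+1}=\frac{1}{(m+1)!}\sum_{i=0}^{m+1}(-1)^{m+1-i}\binom{m+1}{i}i^{\ell+1}$ via the substitution $i\mapsto i+1$ and the absorption identity $i\binom{m+1}{i}=(m+1)\binom{m}{i-1}$.

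\textbf{Step 3: assembling the product.} For $d=1$ the rational function defining $R_1$ factors as $\prod_{i=1}^b\frac{(1-x_i)^{-(t+1)}}{1-u((1-x_i)^{-t}-1)}$. Coefficient extraction is multilinear: $[x_1^{n_1}\cdots x_b^{n_b}]$ turns this product (whose $i$-th factor involves $x_i$ and the shared variables $t,u$) into the product of the individual $[x_i^{n_i}]$-coefficients, while $[t^k]$ and $[u^j]$ convert the remaining product over $i$ into a double convolution over compositions $k=k_1+\dots+k_b$, $j=j_1+\dots+j_b$. Applying the identity of Step 2 with $(N,\ell,m)=(n_i,k_i,j_i)$ then shows that $R_1(k,j,b,(n_1,\dots,n_b))$ equals the last expression of Step 1, which finishes the type-$B$ case; the type-$A$ case is identical. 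The argument has no genuinely hard step --- it is essentially multiplicativity plus bookkeeping, and one could alternatively derive it from Proposition~\ref{prop:sum_intr_vol} once available, since the tangent cones of Schl\"afli orthoschemes are products of Weyl chambers. The two points that require some care are the elementary but slightly fiddly identification of the alternating sum in Step 2 with a Stirling number of the second kind, and, in Step 3, keeping straight which coefficient extractions commute and which produce convolutions.
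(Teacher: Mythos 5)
Your proposal is correct, but it takes a genuinely different route from the paper. The paper never invokes Theorem~\ref{theorem:Schlaefli_typeB_simplex}: it factors the tangent cones of $K^B$ and then applies Lemma~\ref{lemma:tangent_cones_typeB} factorwise to reduce everything to products of Weyl chambers, computes the generating polynomials of their conic intrinsic volumes via \eqref{eq_in_vol_Weyl_chamb}, and resums (introducing $x_1,\ldots,x_b$ and $u$) directly into $R_1$; the $A$-case is then redone separately through the unbounded sets $\widetilde K_{n_i}^A$, with faces counted with multiplicities $i_1^{(1)}\cdots i_1^{(b)}$. You instead bootstrap from the already-proved single-orthoscheme case: the face/tangent-cone factorization plus the product rule for conic intrinsic volumes turns the product sum into a double convolution of the quantities $\frac{j_i!}{n_i!}\stirling{n_i+1}{k_i+1}\stirlingsec{k_i+1}{j_i+1}$, and the only new ingredient is your Step 2 identity identifying this single-factor quantity with the $b=1$ coefficient extraction, which you prove via the binomial expansion of $\big((1-x)^{-t}-1\big)^m$, the $r=1$ case of \eqref{eq:gen_fct_r-stirling1}, and the finite-difference formula for $\stirlingsec{\ell+1}{m+1}$ (your absorption-identity computation checks out, and there is no circularity since Theorem~\ref{theorem:Schlaefli_typeB_simplex} rests only on Proposition~\ref{prop:sum_intr_vol}). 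What each approach buys: yours is shorter given Theorem~\ref{theorem:Schlaefli_typeB_simplex} and makes the $A$-case immediate, since the $\widetilde K^A$ bookkeeping was already absorbed into that theorem; the paper's Weyl-chamber computation is more self-contained and produces the intermediate identity \eqref{eq:important_typeB}, which is reused later in the proof of Proposition~\ref{prop:sum_prod_simplices_A&B} and which your argument does not directly yield.
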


The proof of Theorem~\ref{theorem:prod_typeB_simplex} is postponed to Section~\ref{section:proof_prod_theorems}. For finite products of Weyl chambers $W^B:=B^{(n_1)}\times\ldots\times B^{(n_b)}$ and $W^A:=A^{(n_1)}\times\ldots\times A^{(n_b)}$, we obtain the following theorems.

\begin{theorem} \label{theorem_prod_B-chambers}
For $j\in\{0,\ldots,n\}$ and $k\in\{0,\ldots,n\}$, it holds that
\begin{align*}
\sum_{F\in\cF_j(W^B)}\upsilon_k(T_F(W^B))&=R_{1/2}\big(k,j,b,(n_1,\ldots,n_b)\big),\\
\sum_{F\in\cF_j(W^A)}\upsilon_k(T_F(W^A))&=R_0\big(k,j,b,(n_1,\ldots,n_b)\big).
\end{align*}
\end{theorem}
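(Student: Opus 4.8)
The plan is to deduce the theorem from the single--chamber formulas of Theorem~\ref{theorem:B-chamber} by exploiting the product structure of $W^B$ and $W^A$, and then to recognise the resulting sum over compositions as the coefficient extraction that defines $R_d$. I treat the two types simultaneously, writing $d=\tfrac12$ for type $B$ and $d=0$ for type $A$; in the latter case $\stirling{n+0}{k+0}_0=\stirling nk$ and $\stirlingsec{k+0}{j+0}_0=\stirlingsec kj$ are ordinary Stirling numbers, and the numerator $(1-x_i)^{-d(t+1)}$ in the definition of $R_0$ is identically $1$.

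First I would use that the faces of a product of polyhedral cones are exactly the products of faces: every $j$-face of $W^B=B^{(n_1)}\times\dots\times B^{(n_b)}$ is $F=F_1\times\dots\times F_b$ with $F_i\in\cF_{j_i}(B^{(n_i)})$, $j_1+\dots+j_b=j$, and $T_F(W^B)=T_{F_1}(B^{(n_1)})\times\dots\times T_{F_b}(B^{(n_b)})$, and similarly for $W^A$. Together with the product rule for conic intrinsic volumes (see~\eqref{eq:intr_vol_prod_gen_fct}) this gives
\begin{align*}
\sum_{F\in\cF_j(W^B)}\upsilon_k(T_F(W^B))
=\sum_{j_1+\dots+j_b=j}\ \sum_{k_1+\dots+k_b=k}\ \prod_{i=1}^b\bigg(\sum_{F_i\in\cF_{j_i}(B^{(n_i)})}\upsilon_{k_i}(T_{F_i}(B^{(n_i)}))\bigg),
\end{align*}
and likewise for $W^A$. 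Inserting Theorem~\ref{theorem:B-chamber}, the inner sum equals $\frac{j_i!}{n_i!}\stirling{n_i+d}{k_i+d}_d\stirlingsec{k_i+d}{j_i+d}_d$, so it remains to prove the combinatorial identity
\begin{align*}
\sum_{j_1+\dots+j_b=j}\ \sum_{k_1+\dots+k_b=k}\ \prod_{i=1}^b\frac{j_i!}{n_i!}\stirling{n_i+d}{k_i+d}_d\,\stirlingsec{k_i+d}{j_i+d}_d
=R_d\big(k,j,b,(n_1,\dots,n_b)\big).
\end{align*}

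The computational core is the following polynomial identity in $t$, valid for $d\in\{0,\tfrac12\}$:
\begin{align}\label{eq:gf_single_factor}
[x^m]\Big((1-x)^{-d(t+1)}\big((1-x)^{-t}-1\big)^{\ell}\Big)=\frac{\ell!}{m!}\sum_{p}\stirling{m+d}{p+d}_d\,\stirlingsec{p+d}{\ell+d}_d\;t^{p}.
\end{align}
To prove~\eqref{eq:gf_single_factor} I would substitute $w=-\log(1-x)$, so $(1-x)^{-t}-1=e^{tw}-1$ and $(1-x)^{-d(t+1)}=(1-x)^{-d}e^{d\,tw}$, hence the bracket equals $(1-x)^{-d}\,e^{d(tw)}(e^{tw}-1)^{\ell}$. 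Expanding $e^{d(tw)}(e^{tw}-1)^{\ell}$ by the exponential generating function~\eqref{eq:gen_fct_r-stirling2} of the $r$-Stirling numbers of the second kind with $r=d$ and formal variable $tw$, then expanding $(1-x)^{-d}w^{p}=(1-x)^{-d}(-\log(1-x))^{p}$ by~\eqref{eq:gen_fct_r-stirling1} of the first kind with $r=d$ and variable $x$, and gathering powers of $x$, yields~\eqref{eq:gf_single_factor}. For $d=0$ this uses only~\eqref{eq:gen_fct_stirling1} and~\eqref{eq:gen_fct_stirling2}; for $d=\tfrac12$ one first verifies that~\eqref{eq:gen_fct_r-stirling1} and~\eqref{eq:gen_fct_r-stirling2} remain valid, which follows from the relations~\eqref{eq:rel_1/2-Stirling_B-analogues} together with the $B$-analogue generating functions~\eqref{eq:gen_fct_stirling1b}, \eqref{eq:gen_fct_stirling2b} after the rescaling $x\mapsto x/2$.

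Finally I would expand the generating function in the definition of $R_d$: using $\big(1-u((1-x_i)^{-t}-1)\big)^{-1}=\sum_{j_i\ge0}u^{j_i}\big((1-x_i)^{-t}-1\big)^{j_i}$, extraction of $[u^j]$ yields the sum over $j_1+\dots+j_b=j$ (the numerators $(1-x_i)^{-d(t+1)}$ being untouched); extraction of $[x_1^{n_1}\cdots x_b^{n_b}]$ then separates the variables and turns each factor into the polynomial in $t$ provided by~\eqref{eq:gf_single_factor} with $(m,\ell)=(n_i,j_i)$; and extraction of $[t^k]$ of the resulting product yields the sum over $k_1+\dots+k_b=k$. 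This is exactly the left--hand side of the combinatorial identity above, which finishes the proof, the case $d=0$ giving $R_0$ and $d=\tfrac12$ giving $R_{1/2}$. The one genuinely delicate point is the half--integer case $d=\tfrac12$ of~\eqref{eq:gf_single_factor} --- that is, the validity of the $r$-Stirling generating functions~\eqref{eq:gen_fct_r-stirling1}--\eqref{eq:gen_fct_r-stirling2} at $r=\tfrac12$, which the excerpt records only for $r\in\N_0$; everything else is formal manipulation of power series in $\Q[[t,x_1,\dots,x_b,u]]$, the coefficient operators commuting and interacting harmlessly with the geometric series, and $R_d=0$ for $k<j$ being consistent since $\stirlingsec{k_i+d}{j_i+d}_d=0$ whenever $k_i<j_i$.
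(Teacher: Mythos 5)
Your proof is correct and takes essentially the same route as the paper's (omitted) proof, which is modeled on that of Theorem~\ref{theorem:prod_typeB_simplex}: decompose faces and tangent cones of the product factorwise, obtain the per-factor contribution, and perform the same $[t^k][x_1^{n_1}\cdots x_b^{n_b}][u^j]$ coefficient bookkeeping, the only cosmetic difference being that you feed in Theorem~\ref{theorem:B-chamber} per factor rather than re-deriving the per-factor generating function from Lemmas~\ref{lemma:B-chamber_tangent_cones} and~\ref{lemma:A-chamber_tangent_cones} together with the Weyl-chamber intrinsic volumes as in the paper's template. The one point you flag, the validity of \eqref{eq:gen_fct_r-stirling1}--\eqref{eq:gen_fct_r-stirling2} at $r=\tfrac12$, is indeed unproblematic: it follows directly from the convolution definitions \eqref{eq:def_analy_r-stirling1}--\eqref{eq:def_analy_r-stirling2} (or from your rescaling of \eqref{eq:gen_fct_stirling1b} and \eqref{eq:gen_fct_stirling2b} via \eqref{eq:rel_1/2-Stirling_B-analogues}), and the paper itself already uses these generating functions at half-integer $r=b/2$ in the proof of Proposition~\ref{prop:sum_intr_vol}.
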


The proof of Theorem~\ref{theorem_prod_B-chambers} is similar to that of Theorem~\ref{theorem:prod_typeB_simplex} and will be omitted. In the proof of Theorem~\ref{theorem:prod_typeB_simplex}, we will observe that if we additionally sum over all possible $n_1,\ldots,n_b$ with fixed sum $n$, the formulas in terms of Taylor coefficients simplify as follows.

\begin{proposition}\label{prop:sum_prod_simplices_A&B}
For all $j\in\{0,\ldots,n\}$ and $k\in\{0,\ldots,n\}$ we have
\begin{align*}
\sum_{\substack{n_1,\ldots,n_b\in\N_0:\\n_1+\ldots+n_b=n}}\sum_{F\in\cF_j(K^B)}\upsilon_k(T_F(K^B))
&=\sum_{\substack{n_1,\ldots,n_b\in\N_0:\\n_1+\ldots+n_b=n}}\sum_{F\in\cF_j(K^A)}\upsilon_k(T_F(K^A))\\
&=\frac{j!}{n!}\binom{j+b-1}{b-1}\stirling{n+b}{k+b}_b\stirlingsec{k+b}{j+b}_b.
\end{align*}
\end{proposition}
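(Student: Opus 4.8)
The plan is to start from Theorem~\ref{theorem:prod_typeB_simplex}, which already gives
\[
\sum_{F\in\cF_j(K^B)}\upsilon_k(T_F(K^B))=\sum_{F\in\cF_j(K^A)}\upsilon_k(T_F(K^A))=R_1\big(k,j,b,(n_1,\ldots,n_b)\big),
\]
and then sum the right-hand side over all compositions $n_1+\ldots+n_b=n$ with $n_i\in\N_0$. Since the two sums on the left-hand side already coincide termwise by Theorem~\ref{theorem:prod_typeB_simplex}, it suffices to establish the identity
\[
\sum_{\substack{n_1,\ldots,n_b\in\N_0\\ n_1+\ldots+n_b=n}} R_1\big(k,j,b,(n_1,\ldots,n_b)\big)
=\frac{j!}{n!}\binom{j+b-1}{b-1}\stirling{n+b}{k+b}_b\stirlingsec{k+b}{j+b}_b .
\]
The key observation is that summing a multivariate Taylor coefficient $[x_1^{n_1}\cdots x_b^{n_b}]$ over all $n_1+\ldots+n_b=n$ is the same as extracting the single coefficient $[x^n]$ after setting $x_1=\ldots=x_b=x$. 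Applying this to the definition of $R_1$ collapses the product over the $b$ factors into a $b$-th power:
\[
\sum_{\substack{n_1,\ldots,n_b\in\N_0\\ n_1+\ldots+n_b=n}} R_1\big(k,j,b,(n_1,\ldots,n_b)\big)
=\big[t^k\big]\big[x^{n}\big]\big[u^j\big]\left(\frac{(1-x)^{-(t+1)}}{1-u\big((1-x)^{-t}-1\big)}\right)^{\!b}.
\]

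Next I would identify the three coefficient extractions on the right with $r$-Stirling numbers for $r=b$. First, expanding the geometric series in $u$ gives $[u^j]$ of the $b$-th power equal to $\binom{j+b-1}{b-1}\big((1-x)^{-t}-1\big)^{j}(1-x)^{-b(t+1)}$; the binomial factor is exactly the number of ways to write $j$ as an ordered sum of $b$ nonnegative integers, which accounts for the factor $\binom{j+b-1}{b-1}$ in the claim. It remains to show
\[
\big[t^k\big]\big[x^n\big]\,(1-x)^{-bt}\,\frac{1}{b!}\big((1-x)^{-t}-1\big)^{j}
=\frac{(b-1)!}{n!}\,\frac{1}{j!}\,\stirling{n+b}{k+b}_b\stirlingsec{k+b}{j+b}_b ,
\]
after absorbing the remaining power of $(1-x)$. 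The factor $\frac{1}{j!}\big((1-x)^{-t}-1\big)^{j}$ can be expanded via the generating function~\eqref{eq:gen_fct_stirling1} — substituting $e^{t}\mapsto (1-x)^{-1}$ turns $\frac{1}{j!}(e^{T}-1)^{j}$ into the generating function~\eqref{eq:gen_fct_stirling2} of the second-kind Stirling numbers — yielding an intermediate sum over an index (say $m$) with the coefficient $\stirlingsec{m}{j}$ and an extra power $(1-x)^{-t}$; then the factor $(1-x)^{-t(m+b)}$ (together with the $[x^n]$ extraction) is expanded by~\eqref{eq:gen_fct_stirling1} in the two-variable form $(1-x)^{-y}=\sum_{k,n}\stirling{n}{k}\frac{x^n}{n!}y^k$, producing first-kind Stirling numbers. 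Matching powers of $t$ and using the $r$-Stirling representations~\eqref{eq:gen_fct_r-stirling1} and~\eqref{eq:gen_fct_r-stirling2} (with $r=b$), or equivalently the explicit formulas~\eqref{def:r-Stirling1} and~\eqref{def:r-Stirling2}, converts the ordinary Stirling numbers shifted by $b$ into the $b$-Stirling numbers $\stirling{n+b}{k+b}_b$ and $\stirlingsec{k+b}{j+b}_b$; the binomial coefficients appearing in those conversions are precisely the ones generated by the $[x^n]$ extraction.

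The main obstacle I expect is the careful bookkeeping in the last step: there are three interlocking coefficient extractions ($[t^k]$, $[x^n]$, $[u^j]$), and one must route the powers of $(1-x)$ correctly so that exactly the $r$-Stirling generating functions~\eqref{eq:gen_fct_r-stirling1}--\eqref{eq:gen_fct_r-stirling2} appear, rather than ordinary Stirling numbers with leftover factors. A clean way to organize this is to work entirely at the level of generating functions: multiply both sides of the claimed identity by $t^k x^n u^j$, sum over $k,n,j$, and verify that both sides equal
\[
\sum_{b,\ldots}\ \frac{(1-x)^{-b(t+1)}}{1-u\big((1-x)^{-t}-1\big)}\Big|_{x_i=x}
\]
term by term, invoking the two-variable forms of~\eqref{eq:gen_fct_r-stirling1} and~\eqref{eq:gen_fct_r-stirling2} for $r=b$ to recognize $\sum \stirling{n+b}{k+b}_b \frac{x^n}{n!}t^k = (1-x)^{-b}\cdot\frac{1}{?}$ and the analogous second-kind expansion. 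Once the generating-function identity is checked, extracting the $[t^k x^n u^j]$ coefficient recovers the stated closed form, and the factor $\frac{j!}{n!}\binom{j+b-1}{b-1}$ emerges from normalizing the exponential generating functions and from the $u$-expansion. The actual verification is then a routine (if somewhat lengthy) substitution; no new idea beyond the specialization $x_i\equiv x$ and the $r=b$ generating-function identities is required.
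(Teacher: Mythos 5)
Your reduction is organized differently from the paper's: the paper goes back to the intermediate identity~\eqref{eq:important_typeB}, sums over $n_1+\ldots+n_b=n$, renumbers the indices so that the whole expression becomes the sum over products of Weyl chambers with $2b$ factors of type $B$, and then quotes Proposition~\ref{prop:sum_intr_vol} with $b$ replaced by $2b$, the factor $\binom{j+b-1}{b-1}$ arising as the number of compositions of $j$ into $b$ nonnegative parts. You instead stay at the level of the generating function $R_1$ from Theorem~\ref{theorem:prod_typeB_simplex}; your two opening moves are correct and clean: summing the coefficients $[x_1^{n_1}\cdots x_b^{n_b}]$ over $n_1+\ldots+n_b=n$ is indeed the same as setting $x_1=\ldots=x_b=x$ and extracting $[x^n]$, and the $[u^j]$ extraction of the $b$-th power of the geometric factor correctly produces $\binom{j+b-1}{b-1}\big((1-x)^{-t}-1\big)^j(1-x)^{-b(t+1)}$. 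At that point everything reduces to the single coefficient identity
\begin{align*}
\big[t^k\big]\big[x^n\big]\Big((1-x)^{-b(t+1)}\big((1-x)^{-t}-1\big)^{j}\Big)=\frac{j!}{n!}\stirling{n+b}{k+b}_b\stirlingsec{k+b}{j+b}_b,
\end{align*}
which is precisely the quantity $C_{n,j,2b}(k)$ evaluated in the paper's generating-function proof of Proposition~\ref{prop:sum_intr_vol} (with its parameter $b$ replaced by $2b$), so your route and the paper's ultimately rest on the same computation.

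The genuine gap is that this last identity is only sketched, and the one explicit intermediate equation you write for it is wrong: your display with $(1-x)^{-bt}\,\frac{1}{b!}\big((1-x)^{-t}-1\big)^j$ on the left and $\frac{(b-1)!}{n!}\frac{1}{j!}$ on the right carries spurious factors $1/b!$, $(b-1)!$ and $1/j!$ (the correct right-hand side has $j!/n!$ and nothing else), and the factor $(1-x)^{-b}$ cannot be ``absorbed'': it is exactly what turns the ordinary Stirling numbers of the first kind into the $b$-Stirling numbers via~\eqref{eq:gen_fct_r-stirling1} with $r=b$, namely $\sum_{m\ge k}\stirling{m+b}{k+b}_b\frac{x^m}{m!}=\frac{1}{k!}(1-x)^{-b}\big(-\log(1-x)\big)^k$. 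Dropping it would leave you with $\stirling{n}{k}$ rather than $\stirling{n+b}{k+b}_b$. The fix is routine: either carry out the two-step expansion as in the paper's proof of Proposition~\ref{prop:sum_intr_vol} (first expand $\big((1-x)^{-t}-1\big)^j(1-x)^{-bt}$ in $t$ using~\eqref{eq:gen_fct_r-stirling2} with $r=b$ and $c=\log(1-x)$, then expand $(1-x)^{-b}(-\log(1-x))^k$ in $x$ using~\eqref{eq:gen_fct_r-stirling1} with $r=b$), or simply observe that the needed coefficient equals the Weyl-chamber sum of Proposition~\ref{prop:sum_intr_vol} with $b$ replaced by $2b$ and cite that statement. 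As written, however, the closing step of your argument is not established, and the formula you offer for it would not compile into a correct proof.
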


The proof is postponed to Section~\ref{section:proof_prop_sum_prod_a,b}.

\subsection{Method of proof of Theorems~\ref{theorem:Schlaefli_typeB_simplex} and~\ref{theorem:B-chamber}}\label{section:method_of_proof}

The main ingredient in proving Theorems~\ref{theorem:Schlaefli_typeB_simplex} and~\ref{theorem:B-chamber} is the following proposition.

\begin{proposition}\label{prop:sum_intr_vol}
Let $(j,b)\in\N_0^2\backslash\{(0,0)\}$ and $n\in\N$. For $l=(l_1,\ldots,l_{j+b})$ such that $l_1,\ldots,l_j\in\N$, $l_{j+1},\ldots,l_{j+b}\in\N_0$ and $l_1+\ldots+l_{j+b}=n$ we define
\begin{align*}
T_l:=A^{(l_1)}\times\ldots\times A^{(l_j)}\times B^{(l_{j+1})}\times\ldots\times B^{(l_{j+b})}.
\end{align*}
Then, for all $k\in\{0,\ldots,n\}$, we have
\begin{align*}
\sum_{\substack{l_1,\ldots,l_j\in\N,l_{j+1},\ldots,l_{j+b}\in\N_0:\\l_1+\ldots+l_{j+b}=n}}\upsilon_k(T_l)=\frac{j!}{n!}\stirling{n+b/2}{k+b/2}_{b/2}\stirlingsec{k+b/2}{j+b/2}_{b/2}.
\end{align*}
\end{proposition}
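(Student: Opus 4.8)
The plan is to encode the quantity $\sum_l\upsilon_k(T_l)$ into a two-variable generating function and then read off the answer from the exponential generating functions of the $r$-Stirling numbers collected in Section~\ref{section:stirling_numbers}. First I would recall the known closed forms of the conic intrinsic volumes of Weyl chambers: writing $P_C(t)=\sum_k\upsilon_k(C)t^k$, one has $P_{A^{(l)}}(t)=\frac{1}{l!}\sum_k\stirling{l}{k}t^k=\frac{t(t+1)\cdots(t+l-1)}{l!}$ and $P_{B^{(l)}}(t)=\frac{1}{2^l l!}\sum_k\stirlingb{l}{k}t^k=\frac{(t+1)(t+3)\cdots(t+2l-1)}{2^l l!}$ for all $l\ge0$, with empty products equal to $1$; see~\cite[Theorem~4.2]{KVZ15} and~\cite[Theorem~1.1]{GK20_Intr_Vol}. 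By the product rule~\eqref{eq:intr_vol_prod_gen_fct} we get $P_{T_l}(t)=\prod_{s=1}^{j}P_{A^{(l_s)}}(t)\cdot\prod_{s=j+1}^{j+b}P_{B^{(l_s)}}(t)$, hence $\upsilon_k(T_l)=[t^k]P_{T_l}(t)$.

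Next I would introduce $G(x,t):=\sum_{n\ge1}x^n\sum_k t^k\big(\sum_l\upsilon_k(T_l)\big)$, where the inner sum runs over all admissible $l$ with $l_1+\dots+l_{j+b}=n$. Summing the factorized expression for $P_{T_l}$ over $l_1,\dots,l_j\ge1$ and $l_{j+1},\dots,l_{j+b}\ge0$ splits $G$ into a product of $j+b$ single-variable series, and the factors $1/l!$ (resp.\ $1/(2^l l!)$) in the intrinsic-volume formulas are exactly what turns the Stirling sums into exponential generating functions. By~\eqref{eq:gen_fct_stirling1} and~\eqref{eq:gen_fct_stirling1b} one obtains $\sum_{l\ge1}P_{A^{(l)}}(t)x^l=(1-x)^{-t}-1$ and $\sum_{l\ge0}P_{B^{(l)}}(t)x^l=(1-x)^{-(t+1)/2}$, the ``$-1$'' reflecting the constraint $l_s\ge1$ in the type-$A$ factors. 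Therefore $G(x,t)=\big((1-x)^{-t}-1\big)^{j}(1-x)^{-b(t+1)/2}$, and consequently $\sum_l\upsilon_k(T_l)=[x^n][t^k]G(x,t)$.

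Finally I would extract the two coefficients. Substituting $u=-\log(1-x)$ and putting $r=b/2$ rewrites $G$ as $(e^{tu}-1)^j\,e^{rtu}\cdot e^{ru}$. By the $r$-Stirling second-kind generating function~\eqref{eq:gen_fct_r-stirling2} one has $e^{rv}(e^v-1)^j=j!\sum_m\stirlingsec{m+r}{j+r}_r\frac{v^m}{m!}$; applied with $v=tu$ this gives $[t^k]G(x,t)=j!\,\stirlingsec{k+r}{j+r}_r\,\frac{u^k}{k!}e^{ru}$. Since $\frac{u^k}{k!}e^{ru}=\frac{(-\log(1-x))^k}{k!}(1-x)^{-r}=\sum_n\stirling{n+r}{k+r}_r\frac{x^n}{n!}$ by the $r$-Stirling first-kind generating function~\eqref{eq:gen_fct_r-stirling1}, extracting $[x^n]$ yields $\sum_l\upsilon_k(T_l)=\frac{j!}{n!}\stirling{n+b/2}{k+b/2}_{b/2}\stirlingsec{k+b/2}{j+b/2}_{b/2}$, which is the claim. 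The degenerate cases $j=0<b$ and $b=0<j$ follow from the same computation, using the convention that empty products equal $1$ together with $\stirlingsec{k+r}{r}_r=r^k$.

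The main difficulty is not conceptual but a matter of bookkeeping: one must keep precise track of the summation ranges — in particular the constraint $l_s\ge1$ in the type-$A$ factors, which is responsible for the ``$-1$'' and hence for the fact that the correct, unshifted Stirling generating functions appear — and one must justify that the $r$-Stirling exponential generating functions~\eqref{eq:gen_fct_r-stirling1}--\eqref{eq:gen_fct_r-stirling2} remain valid for the half-integer value $r=b/2$ when $b$ is odd. The latter follows by inserting the closed forms~\eqref{eq:def_analy_r-stirling1} and~\eqref{eq:def_analy_r-stirling2} into the definitions and summing the resulting Cauchy products, using the binomial series $(1-x)^{-r}=\sum_m r^{\overline m}\frac{x^m}{m!}$. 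Everything else is a routine manipulation of power series, all convergent in a neighbourhood of $x=0$.
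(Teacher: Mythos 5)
Your proposal is correct and follows essentially the same route as the paper's first proof of Proposition~\ref{prop:sum_intr_vol}: the product rule for conic intrinsic volumes combined with the known generating polynomials of $A^{(l)}$ and $B^{(l)}$, leading to the two-variable generating function $\big((1-x)^{-t}-1\big)^j(1-x)^{-\frac b2(t+1)}$, from which both coefficients are extracted via the $r$-Stirling exponential generating functions at $r=b/2$ (the paper justifies the half-integer case through the analytic continuations~\eqref{eq:def_analy_r-stirling1}--\eqref{eq:def_analy_r-stirling2}, exactly as you indicate). The only difference from the paper is cosmetic ($u=-\log(1-x)$ instead of $c=\log(1-x)$), so no further comparison is needed.
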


%\begin{remark}
%The above formula includes the special cases $j=0$ or $b=0$. For $j=0$, the cones $T_l$ are products of $B$-chambers only and we have
%\begin{align*}
%\sum_{\substack{l_1,\ldots,l_b\in\N_0:\\l_1+\ldots+l_b=n}}\upsilon_k(T_l)= \frac{0!}{n!}\stirling{n+b/2}{k+b/2}_{b/2}\stirlingsec{k+b/2}{b/2}_{b/2}=\frac{1}{n!}\Big(\frac{b}{2}\Big)^k\stirling{n+b/2}{k+b/2}_{b/2},
%\end{align*}
%where we used~\eqref{eq:def_analy_r-stirling2} in the last step. For $b=0$, the cone $T_l$ only consists of Weyl chambers of type $A$ and we obtain
%\begin{align*}
%\sum_{\substack{l_1,\ldots,l_j\in\N_0:\\l_1+\ldots+l_j=n}}\upsilon_k(T_l)=\frac{j!}{n!}\stirling{n}{k}_0\stirlingsec{k}{j}_0=\frac{j!}{n!}\stirling{n}{k}\stirlingsec{k}{j}.
%\end{align*}
%\end{remark}

We will give two different ways to prove this proposition.
In Section~\ref{section:proof_prop_typeB}, we will prove it by computing the generating function of the intrinsic volumes. In Section~\ref{section:proofs_intext_angles}, we will present a different proof in which we compute the internal and external angles of the faces of the tangent cones.

%The main ingredient in proving Theorem~\ref{theorem:Schlaefli_typeA_simplex} is the following proposition.
%
%\begin{proposition} \label{prop:prod_typeA_simplex}
%For $l=(l_1,\ldots,l_{j})$  such that $l_1,\ldots,l_j\in\N$ and $l_1+\ldots+l_{j}=n+b$ we define
%\begin{align*}
%T_l:=A^{(l_1)}\times\ldots\times A^{(l_j)}.
%\end{align*}
%Then, it follows that
%\begin{align*}
%\sum_{\substack{l_1,\ldots,l_j\in\N:\\l_1+\ldots+l_{j}=n+b}}l_1\cdot\ldots\cdot %l_b\upsilon_k(T_l)=\frac{(j-b)!}{n!}\stirling{n+b}{k}_b\stirlingsec{k}{j}_b.
%\end{align*}
%for $k\in\{b,\ldots,n+b\}$ and $j\in\{b,\ldots,n+b\}$. Note that the product of the $l_i$'s ends with $l_b$, not $l_j$.
%\end{proposition}

%The proof of this proposition is postponed to Section~\ref{section:proof_genfct_typeA}.

In order to see that Theorems~\ref{theorem:Schlaefli_typeB_simplex} and~\ref{theorem:B-chamber} follow from Proposition~\ref{prop:sum_intr_vol}, we describe the collections of  tangent cones of the Schl\"afli orthoschemes and Weyl chambers of types $A$ and $B$ at their corresponding faces.

\subsubsection*{Schl\"afli orthoschemes of type $B$}
The faces of $K_n^B$ (and of any polytope in general) are obtained by replacing some of the linear inequalities in its defining conditions by equalities. Thus, each $j$-face of $K_n^B$ is determined by a collection $J:=\{i_0,\ldots,i_j\}$ of indices $0\le i_0<i_1<\ldots<i_j\le n$ and given by
\begin{multline*}
F_J:=\{x\in\R^d:1=x_1=\ldots=x_{i_0}\ge x_{i_0+1}=\ldots=x_{i_1}\ge \\\ldots\ge x_{i_{j-1}+1}=\ldots=x_{i_j}\ge x_{i_j+1}=\ldots=x_n=0\}.
\end{multline*}
Note that for $i_0=0$, no $x_i$ is required to be $1$. Similarly,  for $i_j=n$, no $x_i$ is required to be $0$. Take a point $x=(x_1,\ldots,x_n)\in\relint F_J$. For this point, all inequalities in the defining condition of $F_J$ are strict. By definition, the tangent cone of  $K_n^B$ at $F_J$ is given by
\begin{align*}
T_{F_J}(K_n^B)=\{v\in\R^n:x+\eps v\in K_n^B\text{ for some }\eps> 0\}.
\end{align*}
It follows that
\begin{multline*}
T_{F_J}(K_n^B)
	=\{v\in\R^n:0\ge v_1\ge\ldots\ge v_{i_0},v_{i_0+1}\ge\ldots\ge v_{i_1},\\
	\hspace*{5cm}\ldots,v_{i_{j-1}+1}\ge\ldots\ge v_{i_j},v_{i_j+1}\ge \ldots\ge v_n\ge 0\},
\end{multline*}
which is isometric to the product
\begin{align*}
B^{(i_0)}\times A^{(i_1-i_0)}\times\ldots\times A^{(i_j-i_{j-1})}\times B^{(n-i_j)},
\end{align*}
where the polyhedral cones
\begin{align*}
B^{(i)}:=\{x\in\R^i:x_1\ge \ldots\ge x_i\ge 0\},\quad
A^{(i)}:=\{x\in\R^i:x_1\ge\ldots\ge x_i\},\quad i\in\N_0,
\end{align*}
are the Weyl chambers of type $B$ and $A$, respectively.
We arrive at the following lemma.

\begin{lemma}\label{lemma:tangent_cones_typeB}
The collection of tangent cones $T_F(K_n^B)$, where $F$ runs through the set of all $j$-faces $\cF_j(K_n^B)$, coincides (up to isometry) with the collection
\begin{align*}
B^{(i_0)}\times A^{(i_1-i_0)}\times\ldots\times A^{(i_j-i_{j-1})}\times B^{(n-i_j)},\quad 0\le i_0<i_1<\ldots< i_j\le n.
\end{align*}
Equivalently, it coincides (up to isometry) with the collection
\begin{align*}
B^{(l_0)}\times A^{(l_1)}\times\ldots\times A^{(l_j)}\times B^{(l_{j+1})}, \quad l_0+\ldots+l_{j+1}=n, \:l_0,l_{j+1}\in\N_0,\: l_1,\ldots,l_j\in\N.
\end{align*}
If the isometry type of some cone appears with some multiplicity in one collection, then it appears with the same multiplicity in the other collections.
\end{lemma}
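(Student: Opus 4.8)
The plan is to establish Lemma~\ref{lemma:tangent_cones_typeB} by a direct combinatorial analysis of how faces of $K_n^B$ arise and what their tangent cones look like, followed by a change-of-variables bookkeeping argument to pass between the two indexing schemes.

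\textbf{Step 1: Parametrize the $j$-faces.} First I would recall that, as with any polyhedral set, a face of $K_n^B$ is obtained by turning some of the defining inequalities $1\ge x_1$, $x_1\ge x_2$, $\dots$, $x_{n-1}\ge x_n$, $x_n\ge 0$ into equalities. The resulting face is a chain of blocks of coordinates forced to be equal, separated by strict inequalities. I would show that such a face is $j$-dimensional precisely when there are $j+1$ "free gaps" among the $n+1$ inequality slots, which is conveniently encoded by a set $J=\{i_0<i_1<\dots<i_j\}\subseteq\{0,1,\dots,n\}$: here $i_0$ records how many leading coordinates are pinned to $1$ (possibly zero of them, when $i_0=0$), $i_j$ records where the trailing block pinned to $0$ begins (no trailing zeros when $i_j=n$), and the intermediate indices mark the block boundaries. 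This gives the face $F_J$ exactly as displayed in the excerpt, and a count of degrees of freedom confirms $\dim F_J = j$.

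\textbf{Step 2: Compute the tangent cone at $F_J$.} Picking $x\in\relint F_J$, every inequality that is an equality on $F_J$ is an equality "at first order" for nearby points, while every strict inequality stays strict. Perturbing $x$ by $\eps v$ and letting $\eps\to 0^+$, the condition $x+\eps v\in K_n^B$ translates into: within each equality-block the perturbations $v$ must respect the same ordering, the leading block (pinned to $1$) contributes $0\ge v_1\ge\dots\ge v_{i_0}$, and the trailing block (pinned to $0$) contributes $v_{i_j+1}\ge\dots\ge v_n\ge 0$; the strict-inequality constraints impose nothing. This yields the product description $B^{(i_0)}\times A^{(i_1-i_0)}\times\dots\times A^{(i_j-i_{j-1})}\times B^{(n-i_j)}$ up to the obvious permutation of coordinates, i.e.\ up to isometry. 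The main subtlety to handle carefully here is the sign: the leading block gives a $B$-cone only because $x_1=\dots=x_{i_0}=1$ is the \emph{maximal} value, so admissible perturbations point downward, which after a reflection $v\mapsto -v$ is isometric to $B^{(i_0)}$; symmetrically for the trailing block at $0$. The two "boundary" factors are of type $B$, and all the "interior" blocks, being unconstrained in absolute value, are of type $A$.

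\textbf{Step 3: Reindex and match multiplicities.} Finally I would set $l_0:=i_0$, $l_r:=i_r-i_{r-1}$ for $r=1,\dots,j$, and $l_{j+1}:=n-i_j$. The map $J\mapsto (l_0,\dots,l_{j+1})$ is a bijection between strictly increasing sequences $0\le i_0<\dots<i_j\le n$ and tuples with $l_0,l_{j+1}\in\N_0$, $l_1,\dots,l_j\in\N$, and $l_0+\dots+l_{j+1}=n$ (the interior $l_r$ are positive exactly because the $i_r$ are \emph{strictly} increasing). Since the isometry type of $B^{(l_0)}\times A^{(l_1)}\times\dots\times A^{(l_j)}\times B^{(l_{j+1})}$ depends only on the tuple $(l_0,\dots,l_{j+1})$ (using commutativity of the product up to isometry and $B^{(0)}=A^{(0)}=\{0\}$ to absorb any vanishing factors, which is why $l_0$ and $l_{j+1}$ may be $0$), the bijection shows the two collections coincide with identical multiplicities. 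I do not expect a serious obstacle anywhere; the one place demanding care is making the "first-order perturbation" argument in Step~2 rigorous and getting the orientation of the two $B$-factors right, but this is standard and the duality relation~\eqref{eq:duality_relation} together with the definition of $T_F(P)$ suffices.
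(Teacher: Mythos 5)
Your proposal is correct and follows essentially the same route as the paper: parametrize the $j$-faces by index sets $\{i_0<\dots<i_j\}\subseteq\{0,\dots,n\}$, compute $T_{F_J}(K_n^B)$ from the definition at a relative interior point to get the block-chain description $0\ge v_1\ge\dots\ge v_{i_0}$, intermediate $A$-type blocks, and $v_{i_j+1}\ge\dots\ge v_n\ge 0$, identify this (up to the sign/permutation isometry) with $B^{(i_0)}\times A^{(i_1-i_0)}\times\dots\times A^{(i_j-i_{j-1})}\times B^{(n-i_j)}$, and reindex via $l_0=i_0$, $l_r=i_r-i_{r-1}$, $l_{j+1}=n-i_j$. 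The only differences are cosmetic: you make the dimension count and the bijection/multiplicity bookkeeping explicit, which the paper leaves implicit.
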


\subsubsection*{Schl\"afli orthoschemes of type $A$}

Now, we consider the tangent cones of Schl\"afli orthoschemes of type $A$. Recall that
\begin{align*}
K_{n}^A=\{x\in\R^{n+1}:x_1\ge\ldots\ge x_{n+1}, x_1-x_{n+1}\le 1, x_1+\ldots+x_{n+1}=0\}.
\end{align*}
Note that unlike the $B$-case, the simplex $K_n^A$ (which has dimension $n$) is contained in $\R^{n+1}$. For us it will be easier to consider the following unbounded set
\begin{align*}
\widetilde{K}_{n}^A:=\{x\in\R^{n+1}:x_1\ge\ldots\ge x_{n+1}, x_1-x_{n+1}\le 1\}.
\end{align*}
Denote by $L_{n+1}$ the $1$-dimensional linear subspace $L_{n+1}=\{x\in\R^{n+1}:x_1=\ldots=x_{n+1}\}$. Then $L_{n+1}^\perp=\{x\in\R^{n+1}:x_1+\ldots+x_{n+1}=0\}$ and we have
\begin{align*}
\widetilde{K}_n^A=L_{n+1}\oplus K_n^A,\quad K_n^A\subset L_{n+1}^\perp,
\end{align*}
where $\oplus$ denotes the orthogonal sum. Thus, there is a one-to-one correspondence $\cF_j(K_n^A)\to \cF_{j+1}(\widetilde{K}_n^A)$ between  the $j$-faces of $K_n^A$ and the $(j+1)$-faces of $\widetilde{K}_n^A$ given by $F\mapsto L_{n+1}\oplus F$. Furthermore, for every $j$-face $F$ of $K_n^A$ we have a relation between the tangent cones of $K_n^A$ and $\widetilde{K}_n^A$ given by
\begin{align}\label{eq:tangent_cones_transform}
T_{F\oplus L_{n+1}}(\widetilde{K}_n^A)=T_F(K_n^A)\oplus L_{n+1}.
\end{align}

Now, consider the collection  of tangent cones $T_F(\widetilde{K}_n^A)$, where $F\in\cF_j(\widetilde{K}_n^A)$ for some $j\in\{1,\ldots,n+1\}$ and $n\in\N_0$ more closely. The faces of $\widetilde{K}_n^A$ are obtained by replacing some inequalities in the defining conditions of $\widetilde{K}_n^A$ by equalities. Thus, there are two types of $j$-faces of $\widetilde{K}_n^A$ for $j\in\{1,\ldots,{n+1}\}$.

The $j$-faces of the first type are of the form
\begin{align}\label{eq:face_type1}
F_1=\{x\in\R^{n+1}:x_1=\ldots=x_{i_1}\ge x_{i_1+1}=\ldots=x_{i_2}\ge\ldots\ge x_{i_{j-1}+1}=\ldots=x_{n+1}, x_1-x_{n+1}\le 1\}
\end{align}
for $1\le i_1<\ldots<i_{j-1}\le n$. Note that for $j=1$, this reduces to the $1$-face $\{x\in\R^{n+1}:x_1=\ldots=x_{n+1}\}$. To determine the tangent cone at $F_1$, take some point in the relative interior of this face. For this point, all inequalities in the defining condition of $F_1$ are strict. Call this point $x=(x_1,\ldots,x_{n+1})\in\relint F_1$. By definition, we have
\begin{align*}
T_{F_1}(\widetilde{K}_n^A)=\{v\in\R^{n+1}:x+\eps v\in \widetilde{K}_n^A\text{ for some $\eps>0$}\}.
\end{align*}
It follows that
\begin{align*}
T_{F_1}(\widetilde{K}_n^A)=\{v\in\R^{n+1}: v_1\ge \ldots\ge v_{i_1},v_{i_1+1}\ge\ldots\ge v_{i_2},\ldots,v_{i_{j-1}+1}\ge \ldots\ge v_{n+1}\}.
\end{align*}
Thus, $T_{F_1}(\widetilde{K}_n^A)$ is equal to $A^{(l_1)}\times\ldots\times A^{(l_j)}$, where $l_1,\ldots,l_j\in\N$ satisfy $l_1+\ldots+l_j=n+1$ and are given by $l_1=i_1,l_2=i_2-i_1,\ldots,l_j=n+1-i_{j-1}$.

The  $j$-faces of $\widetilde{K}_n^A$ of the second type are of the form
\begin{align}\label{eq:face_type2}
F_2=\{x\in\R^{n+1}:x_1=\ldots =x_{i_1}\ge x_{i_1+1}=\ldots =x_{i_2}\ge \ldots\ge x_{i_j+1}=\ldots=x_{n+1}, x_1-x_{n+1}=1\}
\end{align}
for $1\le i_1<\ldots<i_j\le n$. The defining condition consists of $j+1$ groups of equalities and the additional condition $x_1-x_{n+1}=1$. Again, take a point $x\in\relint F_2$.
%Note that for $j=1$, we have
For this point all inequalities in the defining condition of $F_2$ are strict.
Hence, the tangent cone is given by
\begin{align*}
&T_{F_2}(\widetilde{K}_n^A)\\
&	\quad=\{v\in\R^{n+1}:x+\eps v\in \widetilde{K}_d^A\text{ for some $\eps>0$}\}\\
&	\quad=\{v\in\R^{n+1}:v_1\ge \ldots\ge v_{i_1},v_{i_1+1}\ge\ldots\ge v_{i_2},\ldots,v_{i_j+1}\ge \ldots\ge v_{n+1},v_1\le v_{n+1}\}\\
&	\quad=\{v\in\R^{n+1}:v_{i_1+1}\ge\ldots\ge v_{i_2},\ldots,v_{i_{j-1}+1}\ge\ldots\ge v_{i_j},v_{i_j+1}\ge\ldots\ge v_{n+1}\ge v_1\ge \ldots\ge v_{i_1}\},
\end{align*}
where in the last step we merged two groups of inequalities.
Hence, $T_{F_2}(\widetilde{K}_n^A)$ is isometric to
$%\begin{align*}
A^{(l_1+l_{j+1})}\times A^{(l_2)}\times\ldots\times A^{(l_j)},
$%\end{align*}
where $l_1,\ldots,l_{j+1}\in\N$ are such that $l_1+\ldots+l_{j+1}=n+1$, that is they form a composition of $n+1$ into $j+1$ parts.

We can combine both types of tangent cones into one type as follows. For a $(j+1)$-composition $l_1+\ldots+l_{j+1}=n+1$, the numbers $k_1:=l_1+l_{j+1},k_2:=l_2,\ldots,k_j:=l_j$ form a $j$-composition of $n+1$. This association is not injective since each $j$-composition $k_1+\ldots+k_j=n+1$ of $n+1$ is assigned to $k_1-1$ compositions of $n+1$ into $j+1$ parts. Indeed, we can represent $k_1$ as $1+(k_1-1),2+(k_2-2),\ldots,(k_1-1)+1$. Thus, combining both types of tangent cones yields the following lemma.

\begin{lemma}\label{lemma:collection_tangentcones_A}
The collection of tangent cones $T_F(\widetilde{K}_n^A)$, where $F$ runs through the set of all $j$-faces $\cF_j(\widetilde{K}_n^A)$, coincides (up to isometry) with the collection of cones
\begin{align*}
A^{(l_1)}\times\ldots\times A^{(l_j)},\quad l_1,\ldots,l_j\in\N: l_1+\ldots+l_j=n+1,
\end{align*}
where each cone of the above collection is repeated $l_1$ times (or taken with multiplicity $l_1$).
\end{lemma}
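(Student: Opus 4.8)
The plan is to prove Lemma~\ref{lemma:collection_tangentcones_A} directly from the case analysis of the faces of $\widetilde{K}_n^A$ that precedes it, assembling the two face types into a single weighted family. First I would recall that the faces of the polyhedral set $\widetilde{K}_n^A$ are obtained by turning some subset of its defining inequalities into equalities, so that every $j$-face falls into exactly one of the two types already computed: faces $F_1$ of the form~\eqref{eq:face_type1}, where the bound $x_1-x_{n+1}\le 1$ is strict, and faces $F_2$ of the form~\eqref{eq:face_type2}, where that bound is active. For a face $F_1$ indexed by $1\le i_1<\ldots<i_{j-1}\le n$, I would restate that its tangent cone is isometric to $A^{(l_1)}\times\ldots\times A^{(l_j)}$ with $(l_1,\ldots,l_j)$ the $j$-composition of $n+1$ given by consecutive gaps, and similarly that for a face $F_2$ indexed by $1\le i_1<\ldots<i_j\le n$ the tangent cone is isometric (after the merging of the two extreme groups of inequalities displayed above) to $A^{(l_1+l_{j+1})}\times A^{(l_2)}\times\ldots\times A^{(l_j)}$ for a $(j+1)$-composition $(l_1,\ldots,l_{j+1})$ of $n+1$.

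Next I would set up the combinatorial bookkeeping. The type-$F_1$ faces are in bijection with $j$-compositions $(l_1,\ldots,l_j)$ of $n+1$ into positive parts, each contributing a cone $A^{(l_1)}\times\ldots\times A^{(l_j)}$ exactly once. The type-$F_2$ faces are in bijection with $(j+1)$-compositions $(l_1,\ldots,l_{j+1})$ of $n+1$ into positive parts, and such a composition contributes the cone $A^{(k_1)}\times A^{(k_2)}\times\ldots\times A^{(k_j)}$ with $k_1=l_1+l_{j+1}$ and $k_i=l_i$ for $2\le i\le j$. The key point is to count, for each fixed $j$-composition $(k_1,\ldots,k_j)$ of $n+1$, how many type-$F_2$ faces produce it: one needs $l_2=k_2,\ldots,l_j=k_j$ and a splitting $l_1+l_{j+1}=k_1$ with $l_1,l_{j+1}\in\N$, and there are exactly $k_1-1$ such splittings $(1+(k_1-1),\ 2+(k_1-2),\ \ldots,\ (k_1-1)+1)$. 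Adding the single contribution from the corresponding type-$F_1$ face gives multiplicity $1+(k_1-1)=k_1$ for each $j$-composition $(k_1,\ldots,k_j)$. Since, up to isometry, the cone $A^{(l_1)}\times\ldots\times A^{(l_j)}$ depends only on the multiset of parts, I would then note that if a given isometry type is attained by several compositions, its total multiplicity is the sum of the first-coordinate weights over all compositions realizing it, which is precisely what the lemma asserts (the phrasing ``each cone of the above collection is repeated $l_1$ times'' being understood at the level of the indexed family of compositions, consistently with the multiplicity remark at the end of Lemma~\ref{lemma:tangent_cones_typeB}).

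I expect the only real subtlety — rather than a genuine obstacle — to be making the two indexing conventions compatible: in the type-$F_2$ analysis the merged cone $A^{(l_1+l_{j+1})}\times A^{(l_2)}\times\ldots\times A^{(l_j)}$ has its ``long'' factor listed first, whereas one might equally naturally write it last, and one must check that the relabelling $k_1=l_1+l_{j+1}$, $k_i=l_i$ used to match it against type-$F_1$ cones is legitimate, i.e. that reordering the Cartesian factors is an isometry (which it is, being a coordinate permutation of $\R^{n+1}$). I would also briefly verify the edge cases $j=1$ — where there are no $F_1$ faces beyond the single $1$-face $L_{n+1}$ itself and where a $1$-composition $(n+1)$ receives multiplicity $n+1$, matching $l_1=n+1$ — and the degenerate $n=0$ case. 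Everything else is a direct transcription of the preceding computations, so no deeper input (no integral geometry, no Stirling-number identities) is needed here.
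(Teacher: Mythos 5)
Your proposal is correct and follows essentially the same route as the paper: classify the $j$-faces of $\widetilde{K}_n^A$ into those with $x_1-x_{n+1}\le 1$ strict and those with $x_1-x_{n+1}=1$, identify the tangent cones with $j$-compositions respectively merged $(j+1)$-compositions of $n+1$, and count that each $j$-composition $(k_1,\ldots,k_j)$ arises once from the first type and $k_1-1$ times from the second, giving multiplicity $k_1$. The bookkeeping and edge-case checks you add are consistent with the paper's argument, so no gap remains.
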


Then, Theorem~\ref{theorem:Schlaefli_typeB_simplex} can be deduced  from Proposition~\ref{prop:sum_intr_vol} and Lemma~\ref{lemma:tangent_cones_typeB} (in the $B$-case), respectively Lemma~\ref{lemma:collection_tangentcones_A} (in the $A$-case), as follows.

\begin{proof}[Proof of Theorem~\ref{theorem:Schlaefli_typeB_simplex} assuming Proposition~\ref{prop:sum_intr_vol}]
We start with the $B$-case. For $j\in \{0,\ldots,n\}$ and $k\in\{0,\ldots,n\}$, we have
\begin{align*}
\sum_{F\in\cF_j(K_n^B)}\upsilon_k(T_F(K_n^B))
&	=\sum_{\substack{l_0,l_{j+1}\in\N_0,l_1,\ldots,l_j\in\N:\\l_0+\ldots+l_{j+1}=n}}\upsilon_k\big(B^{(l_0)}\times A^{(l_1)}\times\ldots\times A^{(l_j)}\times B^{(l_{j+1})}\big)\\
&	=\frac{j!}{n!}\stirling{n+1}{k+1}_{1}\stirlingsec{k+1}{j+1}_{1}
	=\frac{j!}{n!}\stirling{n+1}{k+1}\stirlingsec{k+1}{j+1},
\end{align*}
where we used Lemma~\ref{lemma:tangent_cones_typeB} in the first step and Proposition~\ref{prop:sum_intr_vol} with $b=2$ in the second step.

The $A$-case requires slightly more work. Using the identity $\upsilon_k(K_n^A)=\upsilon_{k+1}(K_n^A\oplus L_{n+1})$ and~\eqref{eq:tangent_cones_transform}, we obtain
\begin{align*}
\sum_{F\in\cF_j(K_n^A)}\upsilon_k(K_n^A)
&	=\sum_{F\in\cF_j(K_n^A)}\upsilon_{k+1}(T_F(K_n^A)\oplus L_{n+1})\\
&	=\sum_{F\in\cF_j(K_n^A)}\upsilon_{k+1}(T_{F\oplus L_{n+1}}(\widetilde{K}^A_n))
	=\sum_{F\in\cF_{j+1}(\widetilde{K}_n^A)}\upsilon_{k+1}(T_{F}(\widetilde{K}_n^A)).
\end{align*}
Applying Lemma~\ref{lemma:collection_tangentcones_A} $j+1$ times with multiplicity $l_1$ replaced by $l_1,\ldots,l_{j+1}$, we can observe that the collection of tangent cones $T_F(\widetilde K_n^A)$, where $F$ runs through all $(j+1)$-faces of $\cF_{j+1}(\widetilde K_n^A)$ and each cone of this collection is repeated $j+1$ times, coincides (up to isometry) with the collection of cones
\begin{align*}
A^{(l_1)}\times\ldots\times A^{(l_{j+1})},\quad l_1,\ldots,l_{j+1}\in\N: l_1+\ldots+l_{j+1}=n+1,
\end{align*}
where each cone is taken with multiplicity $l_1+\ldots+l_{j+1}= n+1$. Therefore, we arrive at
\begin{align*}
\sum_{F\in\cF_{j+1}(\widetilde K_n^A)}(j+1)\upsilon_{k+1}(T_F(\widetilde K_n^A))
&	=\sum_{\substack{l_1,\ldots,l_{j+1}\in\N:\\ l_1+\ldots+l_{j+1}=n+1}}
(n+1) \upsilon_{k+1}\big(A^{(l_1)}\times\ldots\times A^{(l_{j+1})}\big)\\
&	=(n+1)\frac{(j+1)!}{(n+1)!}\stirling{n+1}{k+1}_0\stirlingsec{k+1}{j+1}_0\\
&	=\frac{(j+1)!}{n!}\stirling{n+1}{k+1}\stirlingsec{k+1}{j+1},
\end{align*}
where we used Proposition~\ref{prop:sum_intr_vol}. Dividing both sides by $j+1$ yields the claim.
\end{proof}

\subsubsection*{Weyl chambers of type $B$}

For $n\in\N$ recall that
\begin{align*}
B^{(n)}:=\{x\in\R^n:x_1\ge x_2\ge\ldots\ge x_n\ge 0\}
\end{align*}
and $B^{(0)}:=\{0\}$ by convention. Now, let $j\in\{0,\ldots,n\}$. Each $j$-face of $B^{(n)}$ is determined by a collection $J:=\{i_1,\ldots,i_j\}$ of indices $1\le i_1<\ldots<i_j\le n$, and given by
\begin{align*}
F_J:=\{x\in\R^n:x_1=\ldots=x_{i_1}\ge \ldots\ge x_{i_{j-1}+1}=\ldots=x_{i_j}\ge x_{i_j+1}=\ldots=x_{n}=0\}.
\end{align*}
Note that for $i_j=n$, no $x_i$'s are required to be $0$, and for $j=0$, we obtain the $0$-dimensional face $\{0\}$. In order to determine the tangent cone $T_{F_J}(B^{(n)})$ take a point $x=(x_1,\ldots,x_n)\in\relint F_J$. Again, this point satisfies the defining conditions of $F_J$ with inequalities replaced by strict inequalities. Thus, the tangent cone is given by
\begin{align*}
T_{F_J}(B^{(n)})
&	=\{v\in\R^n:x+\eps v\in B^{(n)}\text{ for some $\eps>0$}\}\\
&	=\{v\in\R^n:v_1\ge \ldots\ge v_{i_1},\ldots,v_{i_{j-1}+1}\ge\ldots\ge v_{i_j},v_{i_j+1}\ge \ldots\ge v_n\ge 0\}\\
&	=A^{(i_1)}\times A^{(i_2-i_1)}\times\ldots\times A^{(i_j-i_{j-1})}\times B^{(n-i_j)}.
\end{align*}
The above reasoning yields the following lemma.

\begin{lemma}\label{lemma:B-chamber_tangent_cones}
The collection of tangent cones $T_F(B^{(n)})$, where $F$ runs through the set of all $j$-faces $\cF_j(B^{(n)})$, coincides  with the collection of polyhedral cones
\begin{align*}
A^{(l_1)}\times\ldots\times A^{(l_j)}\times B^{(l_{j+1})},\quad l_1+\ldots+l_{j+1}=n,\: l_1,\ldots,l_j\in\N,\: l_{j+1}\in\N_0.
\end{align*}
\end{lemma}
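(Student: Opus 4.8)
The plan is to read the statement off directly from the face description and the tangent-cone computation carried out in the paragraphs just above the lemma. First I would recall that the $j$-faces of $B^{(n)}$ are parametrized by index sets $J=\{i_1,\ldots,i_j\}$ with $1\le i_1<\cdots<i_j\le n$, the face $F_J$ being obtained by collapsing the appropriate $n-j$ of the $n$ defining inequalities of $B^{(n)}$ to equalities, as made explicit above. The computation performed there shows that, for a point $x$ in $\relint F_J$ (and the tangent cone does not depend on this choice), the maximal blocks of equal coordinates of $x$ each contribute an $A$-chamber of the corresponding size, the trailing block (whose coordinates vanish) contributes a $B$-chamber because $x_n=0$ lies on the boundary hyperplane of $B^{(n)}$, and two consecutive blocks impose no joint constraint on the perturbation $v$ because the separating inequality of $B^{(n)}$ is strict at $x$. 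Hence
\begin{align*}
T_{F_J}(B^{(n)})=A^{(i_1)}\times A^{(i_2-i_1)}\times\cdots\times A^{(i_j-i_{j-1})}\times B^{(n-i_j)}.
\end{align*}

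It then remains only to match the two indexing sets. Setting $i_0:=0$ and $l_r:=i_r-i_{r-1}$ for $r=1,\ldots,j$ together with $l_{j+1}:=n-i_j$, the assignment $J\mapsto(l_1,\ldots,l_{j+1})$ is the standard "gaps" bijection between the $j$-element subsets of $\{1,\ldots,n\}$ and the tuples $(l_1,\ldots,l_{j+1})$ with $l_1,\ldots,l_j\in\N$, $l_{j+1}\in\N_0$ and $l_1+\cdots+l_{j+1}=n$; its inverse recovers $i_r=l_1+\cdots+l_r$. Under this identification $T_{F_J}(B^{(n)})$ is exactly $A^{(l_1)}\times\cdots\times A^{(l_j)}\times B^{(l_{j+1})}$, which gives the claimed coincidence of collections. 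Since the assignment is a genuine bijection between faces and compositions, it also transports multiplicities faithfully, so each isometry type occurs with the same multiplicity on both sides (here with multiplicity one per composition).

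There is essentially no obstacle beyond careful bookkeeping: all the geometric content is the tangent-cone computation already done, and the only points requiring attention are the boundary conventions — $i_j=n$ yields the trailing factor $B^{(0)}=\{0\}$, and $j=0$ gives the single face $\{0\}$ with $T_{\{0\}}(B^{(n)})=B^{(n)}$ — and the verification that the last block genuinely contributes a $B$-chamber rather than an $A$-chamber, which is the one place where the inequality $x_n\ge 0$ being \emph{active} at $x\in\relint F_J$ (while all interior blocks lie strictly inside the corresponding slab) is used.
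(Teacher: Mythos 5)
Your proposal is correct and follows essentially the same route as the paper: you parametrize the $j$-faces by index sets $J=\{i_1,\ldots,i_j\}$, compute $T_{F_J}(B^{(n)})=A^{(i_1)}\times A^{(i_2-i_1)}\times\cdots\times A^{(i_j-i_{j-1})}\times B^{(n-i_j)}$ from a relative-interior point, and pass to compositions via the standard gaps bijection, exactly as in the text preceding the lemma. Your extra care about the boundary cases $i_j=n$ and $j=0$ and about the bijectivity (hence multiplicity one per composition) is consistent with the paper's statement.
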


\subsubsection*{Weyl chambers of type $A$}

For $n\in\N$ recall that
\begin{align*}
A^{(n)}:=\{x\in\R^n:x_1\ge x_2\ge\ldots\ge x_n\}.
\end{align*}
For  $j\in\{1,\ldots,n\}$ every $j$-face of $A^{(n)}$ is determined by a collection $J:=\{i_1,\ldots,i_{j-1}\}$ of indices $1\le i_1<\ldots<i_{j-1}\le n-1$, and given by
\begin{align*}
F_J:=\{x\in\R^n:x_1=\ldots=x_{i_1}\ge \ldots\ge x_{i_{j-1}+1}=\ldots=x_{n}\}.
\end{align*}
Note that for $j=1$, we obtain the $1$-face $\{x\in\R^n:x_1=\ldots=x_n\}$. In order to determine the tangent cone $T_{F_J}(A^{(n)})$ consider a point $x=(x_1,\ldots,x_n)\in\relint F_J$. In a fashion similar to the case of a $B$-type Weyl chamber, we can characterize the tangent cone of $A^{(n)}$ at $F_J$ as follows:
\begin{align*}
T_{F_J}(A^{(n)})
&	=\{v\in\R^n:x+\eps v\in A^{(n)}\text{ for some $\eps>0$}\}\\
&	=\{v\in\R^n:v_1\ge \ldots\ge v_{i_1},\ldots,v_{i_{j-1}+1}\ge\ldots\ge v_{n}\}\\
&	=A^{(i_1)}\times A^{(i_2-i_1)}\times\ldots\times A^{(n-i_{j-1})}.
\end{align*}
This yields the following analogue of Lemma~\ref{lemma:B-chamber_tangent_cones}.

\begin{lemma}\label{lemma:A-chamber_tangent_cones}
The collection of tangent cones $T_F(A^{(n)})$, where $F$ runs through the set of all $j$-faces $\cF_j(A^{(n)})$, coincides  with the collection of polyhedral cones
\begin{align*}
A^{(l_1)}\times\ldots\times A^{(l_j)},\quad l_1+\ldots+l_{j}=n,\: l_1,\ldots,l_j\in\N.
\end{align*}
\end{lemma}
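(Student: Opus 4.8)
The plan is to read the lemma off the face-parametrization and the tangent-cone computation already carried out just above, the only extra ingredient being a re-indexing from gap-positions to block-sizes. First I would recall that the facet-defining inequalities of $A^{(n)}$ are $x_1\ge x_2,\ x_2\ge x_3,\ \ldots,\ x_{n-1}\ge x_n$, so every face is obtained by turning a subset of these $n-1$ inequalities into equalities; turning $n-j$ of them into (independent) equalities drops the dimension by $n-j$, so a $j$-face arises precisely by keeping $j-1$ of the inequalities strict. Labelling the positions of the kept ones by $J=\{i_1<\cdots<i_{j-1}\}\subseteq\{1,\ldots,n-1\}$ reproduces the face $F_J$ written above (with $j=1$ corresponding to $J=\varnothing$ and $F_\varnothing=\{x_1=\cdots=x_n\}$). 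Since distinct subsets $J$ give distinct faces and every $j$-face is of this form, $\cF_j(A^{(n)})$ is in bijection with the $(j-1)$-element subsets of $\{1,\ldots,n-1\}$.

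Next I would invoke the tangent-cone identity established above: for $x\in\relint F_J$,
\[
T_{F_J}(A^{(n)})=\{v\in\R^n:v_1\ge\cdots\ge v_{i_1},\ v_{i_1+1}\ge\cdots\ge v_{i_2},\ \ldots,\ v_{i_{j-1}+1}\ge\cdots\ge v_n\}=A^{(i_1)}\times A^{(i_2-i_1)}\times\cdots\times A^{(n-i_{j-1})}.
\]
The point that deserves a word of care is that in the relative interior of $F_J$ all the inequalities between consecutive blocks are \emph{strict}, hence survive a small perturbation $x+\eps v$ and impose no condition on the tangent direction $v$, whereas within each block the weakly-decreasing relations persist; this is exactly why the tangent cone splits as the displayed product of $A$-type Weyl chambers according to the blocks of $J$.

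Finally I would set $l_1:=i_1$, $l_2:=i_2-i_1$, \ldots, $l_j:=n-i_{j-1}$. This is a bijection from the $(j-1)$-element subsets of $\{1,\ldots,n-1\}$ onto the compositions $l_1+\cdots+l_j=n$ with $l_1,\ldots,l_j\in\N$, the inverse being $i_m=l_1+\cdots+l_m$. Composing the bijection $F\mapsto J$ with this re-indexing and with the tangent-cone identity shows that the collection $\{\,T_F(A^{(n)}):F\in\cF_j(A^{(n)})\,\}$ coincides, term for term, with $\{\,A^{(l_1)}\times\cdots\times A^{(l_j)}:l_1+\cdots+l_j=n,\ l_i\in\N\,\}$ (in particular each composition occurs exactly once, so no multiplicities arise here, in contrast to the $\widetilde K_n^A$ case of Lemma~\ref{lemma:collection_tangentcones_A}), which is the assertion. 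I do not anticipate any genuine obstacle: apart from the remark about strictness in $\relint F_J$, the whole argument is bookkeeping with compositions of $n$.
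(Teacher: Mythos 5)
Your argument is correct and is essentially the paper's own proof: parametrize the $j$-faces by the subsets $\{i_1<\cdots<i_{j-1}\}\subseteq\{1,\ldots,n-1\}$, compute $T_{F_J}(A^{(n)})$ from a relative-interior point (strict inequalities between blocks impose no condition, weak ones within blocks persist), and re-index block sizes as a composition $l_1+\cdots+l_j=n$. The only addition is your explicit remark that the correspondence is a bijection so no multiplicities occur, which the paper leaves implicit.
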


\begin{proof}[Proof of Theorem~\ref{theorem:B-chamber} assuming Proposition~\ref{prop:sum_intr_vol}]
We start with the $B$-case. For $j\in\{0,\ldots,n\}$ and $k\in\{0,\ldots,n\}$, we have
\begin{align*}
\sum_{F\in\cF_j(B^{(n)})}\upsilon_k\big(T_F(B^{(n)})\big)
&	=\sum_{\substack{l_1,\ldots,l_j\in\N,l_{j+1}\in\N_0:\\l_1+\ldots+l_{j+1}=n}}\upsilon_k\big(A^{(l_1)}\times\ldots\times A^{(l_j)}\times B^{(l_{j+1})}\big)\\
&	=\frac{j!}{n!}\stirling{n+1/2}{k+1/2}_{1/2}\stirlingsec{k+1/2}{j+1/2}_{1/2}
	=\frac{j!}{n!}2^{j-n}\stirlingb nk \stirlingsecb nk,
\end{align*}
where we used Lemma~\ref{lemma:B-chamber_tangent_cones} in the first step, Proposition~\ref{prop:sum_intr_vol} with $b=1$ in the second step and the formulas in~\eqref{eq:rel_1/2-Stirling_B-analogues} in the last step.

In the $A$-case, for $j\in\{1,\ldots,n\}$ and $k\in\{1,\ldots,n\}$, we have
\begin{align*}
\sum_{F\in\cF_j(A^{(n)})}\upsilon_k\big(T_F(A^{(n)})\big)
	=\sum_{\substack{l_1,\ldots,l_j\in\N:\\l_1+\ldots+l_{j}=n}}\upsilon_k\big(A^{(l_1)}\times\ldots\times A^{(l_j)}\big)
	=\frac{j!}{n!}\stirling{n}{k}_0\stirlingsec{k}{j}_{0}
	=\frac{j!}{n!}\stirling{n}{k}\stirlingsec{k}{j},
\end{align*}
where we used Lemma~\ref{lemma:A-chamber_tangent_cones} in the first step and Proposition~\ref{prop:sum_intr_vol} with $b=0$ in the second step. For $j=0$ or $k=0$ the formula is evidently true as well.
\end{proof}

\subsubsection*{Identities for the generalized Stirling numbers}
Let us also mention that Proposition~\ref{prop:sum_intr_vol} yields the following identities
for the generalized Stirling numbers. 

\begin{corollary} \label{cor:stirlingnumbers_sum}
For $n\in\N$,   $j\in\{0,\ldots,n\}$ and $2b\in \N_0$, the following identities hold:
\begin{align}
&\sum_{k=0}^n\stirling{n+b}{k+b}_{b}\stirlingsec{k+b}{j+b}_b=\frac{n!}{j!}\binom{n+2b-1}{j+2b-1}=\frac{n!}{j!}\binom{n+2b-1}{n-j}
\label{eq:sum_prod_r-stirling},\\
&\sum_{k=0}^n(-1)^k\stirling{n+b}{k+b}_b\stirlingsec{k+b}{j+b}_b=0.
\label{eq:altern_sum_prod_stirling}
\end{align}
\end{corollary}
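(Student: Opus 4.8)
The plan is to obtain both identities directly from Proposition~\ref{prop:sum_intr_vol} by summing its formula over the admissible values of $k$: once with all coefficients $+1$, which will give~\eqref{eq:sum_prod_r-stirling}, and once with alternating coefficients $(-1)^k$, which will give~\eqref{eq:altern_sum_prod_stirling}. To align the indices, I would apply Proposition~\ref{prop:sum_intr_vol} with its number of $B$-chambers set equal to the integer $2b$, so that the half-integer indices $b/2$ occurring there turn into $b$; the single case $j=0$ with $2b=0$ is excluded from the proposition but is trivially true (both sides of both identities vanish), and can be disposed of first.

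For~\eqref{eq:sum_prod_r-stirling}: each cone $T_l=A^{(l_1)}\times\cdots\times A^{(l_j)}\times B^{(l_{j+1})}\times\cdots\times B^{(l_{j+2b})}$ is $n$-dimensional, since $A^{(m)}$ and $B^{(m)}$ are full-dimensional in $\R^m$ and $l_1+\cdots+l_{j+2b}=n$; hence~\eqref{eq:intrinsic_sum1} gives $\sum_{k=0}^n\upsilon_k(T_l)=1$ for each $l$. Summing the identity of Proposition~\ref{prop:sum_intr_vol} over $k\in\{0,\ldots,n\}$ and interchanging the two finite sums, its left-hand side becomes $\sum_l\sum_{k=0}^n\upsilon_k(T_l)=\sum_l 1$, i.e. the number of tuples $(l_1,\ldots,l_{j+2b})$ with $l_1,\ldots,l_j\in\N$, $l_{j+1},\ldots,l_{j+2b}\in\N_0$ and $l_1+\cdots+l_{j+2b}=n$. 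Replacing $l_i$ by $l_i-1$ for $i\le j$, this is the number of weak compositions of $n-j$ into $j+2b$ parts, namely $\binom{(n-j)+(j+2b)-1}{(j+2b)-1}=\binom{n+2b-1}{j+2b-1}=\binom{n+2b-1}{n-j}$. Equating with the right-hand side $\frac{j!}{n!}\sum_{k=0}^n\stirling{n+b}{k+b}_b\stirlingsec{k+b}{j+b}_b$ and clearing the factor $j!/n!$ yields~\eqref{eq:sum_prod_r-stirling}.

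For~\eqref{eq:altern_sum_prod_stirling}: multiply the identity of Proposition~\ref{prop:sum_intr_vol} by $(-1)^k$, sum over $k\in\{0,\ldots,n\}$, and interchange sums; the left-hand side becomes $\sum_l\big(\sum_{k=0}^n(-1)^k\upsilon_k(T_l)\big)$. When $j<n$, no $T_l$ is a linear subspace: a product of cones is a linear subspace iff each factor is, while $A^{(m)}$ is a subspace only for $m\le 1$ and $B^{(m)}$ only for $m=0$, and $l_1=\cdots=l_j=1$ together with $l_{j+1}=\cdots=l_{j+2b}=0$ would force $n=j$. Thus the Gauss--Bonnet relation~\eqref{eq:intrinsic_sum2} applies to each $T_l$ and gives $\sum_{k=0}^n(-1)^k\upsilon_k(T_l)=0$, so the whole sum vanishes, which is~\eqref{eq:altern_sum_prod_stirling}. (For $j=n$ the alternating sum collapses to its $k=n$ term and equals $(-1)^n$, so~\eqref{eq:altern_sum_prod_stirling} is to be read with $j\le n-1$.)

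Once Proposition~\ref{prop:sum_intr_vol} is available the computation is essentially free. The points that need care are: the translation of the half-integer parameter $b/2$ of the proposition into the parameter $b$ of the corollary; the remark that $\dim T_l=n$, so that~\eqref{eq:intrinsic_sum1} is used with upper summation limit exactly $n$; and, for the alternating identity, the verification that the product cones $T_l$ are not linear subspaces, which is the hypothesis required by Gauss--Bonnet and is exactly where the restriction $j<n$ is needed. I expect this last item to be the only genuine obstacle --- the two combinatorial identities themselves reduce to a stars-and-bars count together with~\eqref{eq:intrinsic_sum1} and~\eqref{eq:intrinsic_sum2}.
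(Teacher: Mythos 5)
Your proof is correct and follows essentially the same route as the paper's: sum the identity of Proposition~\ref{prop:sum_intr_vol} (with its number of $B$-factors taken to be $2b$) over $k$, use \eqref{eq:intrinsic_sum1} and the Gauss--Bonnet relation \eqref{eq:intrinsic_sum2} for each $T_l$, and count the admissible compositions by stars and bars. Your additional care is in fact a refinement of the paper's own argument: the paper does not check that the $T_l$ are not linear subspaces, and for $j=n$ the unique cone $T_l$ \emph{is} a subspace, so \eqref{eq:altern_sum_prod_stirling} then equals $(-1)^n$ rather than $0$ (consistent with Broder's orthogonality relation), which means your restriction of the alternating identity to $j\le n-1$, as well as your separate treatment of the excluded case $(j,2b)=(0,0)$, is the correct reading of the statement.
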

Note that~\eqref{eq:altern_sum_prod_stirling} is a special case of the orthogonality relation between the $r$-Stirling numbers proven by Broder~\cite[Theorem 25]{Broder_1982}. Relation~\eqref{eq:sum_prod_r-stirling} is also known for $b=0,1$, the numbers on the right-hand side being the Lah numbers. 

\begin{proof} 
We use the properties~\eqref{eq:intrinsic_sum1} and~\eqref{eq:intrinsic_sum2}
for the conic intrinsic volumes of a cone $C$ that is not a linear subspace. With the notation for $T_l$ introduced in Proposition~\ref{prop:sum_intr_vol} and with $b$ replaced by $2b$, this yields
\begin{align*}
\sum_{k=0}^n\:\sum_{\substack{l_1,\ldots,l_j\in\N,l_{j+1},\ldots,l_{j+2b}\in\N_0:\\l_1+\ldots+l_{j+2b}=n}}\upsilon_k(T_l)
=
\sum_{\substack{l_1',\ldots,l_{j+2b}'\in\N:\\l_1'+\ldots+l_{j+2b}'=n+2b}}1=\binom{n+2b-1}{j+2b-1}=\binom{n+2b-1}{n-j},
\end{align*}
where we used the well-known fact that the number of compositions of $n$ into $k$ positive integers is $\binom{n-1}{k-1}$. Similarly, we obtain
\begin{align*}
\sum_{k=0}^n(-1)^k\sum_{\substack{l_1,\ldots,l_j\in\N,l_{j+1},\ldots,l_{j+2b}\in\N_0:\\l_1+\ldots+l_{j + 2b}=n}}\upsilon_k(T_l)=0.
\end{align*}
On the other hand, applying Proposition~\ref{prop:sum_intr_vol} with $b$ replaced by $2b$ yields
\begin{align*}
\sum_{k=0}^n\sum_{\substack{l_1,\ldots,l_j\in\N,l_{j+1},\ldots,l_{j+2b}\in\N_0:\\l_1+\ldots+l_{j+2b}=n}}\upsilon_k(T_l)=\sum_{k=0}^n\frac{j!}{n!}\stirling{n+b}{k+b}_b\stirlingsec{k+b}{j+b}_b
\end{align*}
and
\begin{align*}
\sum_{k=0}^n(-1)^k\sum_{\substack{l_1,\ldots,l_j\in\N,l_{j+1},\ldots,l_{j+2b}\in\N_0:\\l_1+\ldots+l_{j+2b}=n}}\upsilon_k(T_l)
=
\sum_{k=0}^n(-1)^k\frac{j!}{n!}\stirling{n+b}{k+b}_b\stirlingsec{k+b}{j+b}_b,
\end{align*}
which proves~\eqref{eq:sum_prod_r-stirling} and~\eqref{eq:altern_sum_prod_stirling}.
\end{proof}

\subsection{Expected face numbers: Convex hulls of Gaussian random walks and bridges}\label{section:facenumbers_walks_bridges}

The above theorems on the angle sums of the tangent cones of Schl\"afli orthoschemes yield results on the expected number of faces of Gaussian random walks and random bridges and their Minkowski sums. Consider independent $d$-dimensional standard Gaussian random vectors
$$
X_1^{(1)},X_2^{(1)},\ldots,X_{n_1}^{(1)},X_1^{(2)},X_2^{(2)},\ldots,X_{n_2}^{(2)},\ldots, X_{1}^{(b)},X_2^{(b)},\ldots,X_{n_b}^{(b)}
$$ and let $n_1+\ldots+n_b=n\geq d$.
For every $i\in \{1,\ldots,b\}$  we define a random walk $(S_0^{(i)},S_1^{(i)},\ldots,S_{n_i}^{(i)})$  by
\begin{align*}
S_k^{(i)}=X_1^{(i)}+\ldots+X_k^{(i)},\quad k=1,\ldots,n_i, \quad S_0^{(i)}:=0.
\end{align*}
Consider the convex hulls of these random walks:
\begin{align*}
C^{(i)}_{n_i}:=\conv\big\{S_0^{(i)},S_1^{(i)},\ldots,S_{n_i}^{(i)}\big\},\quad i=1,\ldots,b.
\end{align*}
The following theorem gives a formula for the expected number of $j$-faces of the Minkowski sum of $b$ such convex hulls defined by
$$
C^{(1)}_{n_1}+\ldots+C^{(b)}_{n_b}=\{v_1+\ldots+v_b:v_1\in C^{(1)}_{n_1},\ldots,v_b\in C^{(b)}_{n_b}\}.
$$

\begin{theorem}\label{theorem:sum_walks}
Let $0\le j<d\le n$ be given and define $C^{(1)}_{n_1},\ldots,C^{(b)}_{n_b}$ as above. Then, we have
\begin{align*}
\E f_j(C^{(1)}_{n_1}+\ldots+C^{(b)}_{n_b})
%&	=\binom{n+b}{j+b}????-\sum_{l=0}^\infty 2R(d+2l+1,j,b,n_1,\ldots,n_b)\\
&	=2\sum_{l\ge 1}R_1(d-2l+1,j,b,(n_1,\ldots,n_b)),
\end{align*}
where we recall that
\begin{align*}
&R_1(k,j,b,(n_1,\ldots,n_b))\\
&	\quad:=\big[t^k\big]\big[x_1^{n_1}\cdot\ldots\cdot x_b^{n_b}\big]\big[u^j\big]\frac{(1-x_1)^{-(t+1)}\cdot\ldots\cdot (1-x_b)^{-(t+1)}}{\big(1-u((1-x_1)^{-t}-1)\big)\cdot\ldots\cdot \big(1-u((1-x_b)^{-t}-1)\big)}.
\end{align*}
\end{theorem}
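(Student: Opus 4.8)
The plan is to reduce the expected face number of a Minkowski sum of Gaussian convex hulls to the angle sums of products of Schl\"afli orthoschemes of type $A$ already computed in Theorem~\ref{theorem:prod_typeB_simplex}. The starting point is the Wendel/Baryshnikov--Vitale-type observation recalled in Section~\ref{sec:schlaefli_orthoschemes}: if $G_i\in\R^{d\times(n_i+1)}$ is a standard Gaussian matrix, then $G_i K_{n_i}^A$ has the same distribution as the centered convex hull $C_{n_i}^{(i)}$ (the random bridge case; the centering does not affect the combinatorial structure of the hull of a random walk either, up to the standard index bookkeeping). Crucially, the affine hull of $C_{n_i}^{(i)}$ is the image under $G_i$ of the affine hull of $K_{n_i}^A$, and since the $G_i$ are independent Gaussian matrices, the Minkowski sum $C_{n_1}^{(1)}+\dots+C_{n_b}^{(b)}$ is almost surely affinely isomorphic to the image of the product $K^A=K_{n_1}^A\times\dots\times K_{n_b}^A$ under a single $d\times n$ Gaussian-type linear map (this uses that Minkowski sum of images of polytopes under linear maps $M_i$ equals the image of the product under $(M_1,\dots,M_b)$, together with $n\ge d$ so that the relevant projection is generic).

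Next I would invoke the general principle relating $j$-faces of a random projection of a polytope $P$ to the Grassmann angles of its tangent cones. Concretely, for a polytope $P$ of dimension $n$ and a generic (here: isotropic Gaussian) linear surjection onto $\R^d$, a face $F\in\cF_j(P)$ survives as a $j$-face of the projection with probability equal to $1-\gamma_{d-j-1}(T_F(P))$ up to the appropriate normalization, so that
\begin{align*}
\E f_j(\text{projection of }P)=\sum_{F\in\cF_j(P)}\bigl(1-\gamma_{d-j-1}(T_F(P))\bigr)=\sum_{F\in\cF_j(P)}\bigl(1-\text{[something]}\bigr),
\end{align*}
and then express the right-hand side through conic intrinsic volumes via the relation~\eqref{eq:relation_grass_intr}, $\gamma_k(C)=2\sum_{i=1,3,5,\ldots}\upsilon_{k+i}(C)$. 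This is exactly the mechanism used in \cite{KVZ17} for a single walk/bridge, and I would quote that paper for the precise form of the projection formula (taking care of the dimension shift coming from $K_n^A\subset\R^{n+1}$ being $n$-dimensional, and of the fact that a bridge hull is the hull of $n_i+1$ Gaussian points lying in an $n_i$-dimensional affine subspace). After summing over the surviving faces and over the parity in~\eqref{eq:relation_grass_intr}, the sum collapses to $2\sum_{l\ge1}\sum_{F\in\cF_j(K^A)}\upsilon_{d-2l+1}(T_F(K^A))$, and Theorem~\ref{theorem:prod_typeB_simplex} identifies the inner sum with $R_1(d-2l+1,j,b,(n_1,\dots,n_b))$. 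Since $R_1(k,j,b,\cdot)=0$ for $k<j$ and the conic intrinsic volumes vanish above the dimension, the sum over $l$ is finite, giving the claimed formula.

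The main obstacle, and the place where care is needed rather than ingenuity, is the bookkeeping in the reduction step: one must verify that the Minkowski sum of the Gaussian hulls is genuinely distributed as a \emph{uniform/Gaussian} linear image of the product $K^A$ (not merely a fixed-matrix image), so that the projection-survival probabilities are governed by Grassmann angles rather than some degenerate quantity; this requires the hypothesis $n\ge d$ and a short argument that the combined map $(G_1,\dots,G_b)$ restricted to $\aff K^A$ is in general position. A secondary technical point is matching the index conventions — the $+1$ shifts between $K_n^A$, $\widetilde K_n^A$, and the $(n_i+1)$-point hulls, and the shift between a $j$-face of the hull and the corresponding face of $K^A$ — which is precisely why the argument produces $R_1(d-2l+1,\ldots)$ with the odd shift $d-2l+1$ rather than $d-2l$. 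Once these are pinned down, invoking \eqref{eq:relation_grass_intr} and Theorem~\ref{theorem:prod_typeB_simplex} is routine. I expect no genuinely hard analytic step; the content is entirely in correctly assembling known facts.
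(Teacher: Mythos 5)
Your overall machinery is the right one and matches the paper's: apply the Affentranger--Schneider projection formula (Lemma~\ref{lemma:facenumber_polytopes}, valid for Gaussian matrices by Baryshnikov--Vitale), rewrite it via~\eqref{eq:relation_grass_intr} as in~\eqref{eq:proj_lemma_alt2}, and then feed in the angle sums from Theorem~\ref{theorem:prod_typeB_simplex}. But the reduction step contains a genuine error: for the \emph{walk} hulls $C^{(i)}_{n_i}$ the correct polytope is the product of type-$B$ orthoschemes, $K^B=K^B_{n_1}\times\cdots\times K^B_{n_b}$, with a single Gaussian matrix $G_B\in\R^{d\times n}$ split into blocks, giving $G_BK^B=G_B^{(1)}K^B_{n_1}+\cdots+G_B^{(b)}K^B_{n_b}\eqdistr C^{(1)}_{n_1}+\cdots+C^{(b)}_{n_b}$. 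Your claim that $G_iK^A_{n_i}$ is distributed as $C^{(i)}_{n_i}$ is false: $G_iK^A_{n_i}$ is distributed as the \emph{bridge} hull $\widetilde C^{(i)}_{n_i}$, and the parenthetical assertion that ``the centering does not affect the combinatorial structure of the hull of a random walk'' is not true as a distributional statement --- the walk hull and the bridge hull are different random polytopes (built from $n_i$ versus $n_i+1$ increments), and their $f$-vectors are different random variables. Their \emph{expected} face numbers do coincide, but in this paper that is precisely a consequence of the equality of the $K^A$- and $K^B$-angle sums in Theorem~\ref{theorem:prod_typeB_simplex}, not an a priori fact you may assume. As written, your argument proves Theorem~\ref{theorem:sum_brides} (bridges), not Theorem~\ref{theorem:sum_walks}.

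The repair is short and is exactly the paper's route: use the $K^B$ identification above (no affine-hull subtleties arise, since $K^B\subset\R^n$ is full-dimensional, unlike $K^A$), apply~\eqref{eq:proj_lemma_alt2} to $P=K^B$, and use the $K^B$ half of Theorem~\ref{theorem:prod_typeB_simplex} to identify $\sum_{F\in\cF_j(K^B)}\upsilon_{d-2l+1}(T_F(K^B))$ with $R_1(d-2l+1,j,b,(n_1,\ldots,n_b))$; alternatively, keep your $K^A$ route but then you have proved the bridge statement and must invoke the equality of the two angle sums to transfer it to walks. A minor secondary point: your survival probability ``$1-\gamma_{d-j-1}(T_F(P))$'' does not match the paper's convention (Lemma~\ref{lemma:facenumber_polytopes} gives $1-\gamma_d(T_F(P))$ with the Grassmann angle as defined in~\eqref{eq:grassmann_angle_def}); you hedge this and your final parity sum $2\sum_{l\ge1}\upsilon_{d-2l+1}$ is correct, but the index convention should be pinned down rather than left ``up to normalization''.
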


The same formula holds for the Minkowski sum of Gaussian random bridges which are essentially Gaussian random walks under the condition that they return to $0$ in the last step. To state it, consider independent $d$-dimensional standard Gaussian random vectors
$$
X_1^{(1)},X_2^{(1)},\ldots,X_{n_1+1}^{(1)},X_1^{(2)},X_2^{(2)},\ldots,X_{n_2+1}^{(2)},\ldots, X_{1}^{(b)},X_2^{(b)},\ldots,X_{n_b+1}^{(b)}
$$
and define the random walks $({S}^{(i)}_0,{S}^{(i)}_1,\ldots,{S}^{(i)}_{n_i+1})$ as above.
%and let $Y_1^{(1)},Y_2^{(1)},\ldots,Y_{n_1+1}^{(1)},Y_1^{(2)},Y_2^{(2)},\ldots,Y_{m_2+1}^{(2)},\ldots, Y_{1}^{(a)},Y_2^{(a)},\ldots,Y_{n_b+1}^{(a)}$ be independent and standard Gaussian random vectors in $\R^d$. Furthermore let $Z^{(i)}_k:=Y^{(i)}_1+\ldots+Y^{(i)}_k$, $Z_0^{(i)}:=0$.
We define the Gaussian bridge $(\widetilde{S}^{(i)}_0,\widetilde{S}^{(i)}_1,\ldots,\widetilde{S}^{(i)}_{n_i+1})$ as a process having  the same distribution as the Gaussian random walk $(S_0^{(i)},S_1^{(i)},\ldots,S_{n_i+1}^{(i)})$ conditioned on the event that $S^{(i)}_{n_i+1}=0$. Equivalently, the Gaussian bridge $(\widetilde{S}^{(i)}_0,\widetilde{S}^{(i)}_1,\ldots,\widetilde{S}^{(i)}_{n_i+1})$ can be constructed as
\begin{align*}
\widetilde{S}_k^{(i)}:=S^{(i)}_k-\frac{k}{n_i+1}S^{(i)}_{n_i+1},\quad k=1,\ldots,{n_i+1}
\end{align*}
for $i=1,\ldots,b$. Note that $\widetilde{S}^{(i)}_{n_i+1}=0$. Define the convex hulls of the Gaussian bridges  by
\begin{align*}
\widetilde{C}^{(i)}_{n_i}:=\conv\big\{\widetilde{S}^{(i)}_0,\widetilde{S}^{(i)}_1,\ldots,\widetilde{S}^{(i)}_{n_i}\big\},\quad i=1,\ldots,b.
\end{align*}

\begin{theorem}\label{theorem:sum_brides}
Let $0\le j<d\le n$ be given and $\widetilde{C}^{(1)}_{n_1},\ldots,\widetilde{C}^{(a)}_{n_b}$ as above. Then, we have
\begin{align*}
\E f_j(\widetilde{C}^{(1)}_{n_1}+\ldots+\widetilde{C}^{(b)}_{n_b})
%&	=\binom{m+a}{j+a}?????-2\sum_{l=0}^\infty R(d+2l+1,j,b,n_1,\ldots,n_b)\\
&	=2\sum_{l\ge 1}R_1(d-2l+1,j,b,(n_1,\ldots,n_b)).
\end{align*}
\end{theorem}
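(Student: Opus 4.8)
The plan is to reduce Theorem~\ref{theorem:sum_brides} to the product version of the angle-sum formula, Theorem~\ref{theorem:prod_typeB_simplex}, exactly as Theorem~\ref{theorem:sum_walks} should follow from it via the Minkowski sum $\leftrightarrow$ product correspondence and the Baryshnikov--Vitale / Affentranger--Schneider type formula for expected face numbers of Gaussian projections of polytopes. The starting observation is that the Minkowski sum of the convex hulls of Gaussian bridges is, in distribution, a Gaussian projection of a product of Schl\"afli orthoschemes of type $A$. Concretely, if $G\in\R^{d\times(n+b)}$ has i.i.d.\ standard Gaussian entries (with a suitable block structure), then $\widetilde C^{(1)}_{n_1}+\ldots+\widetilde C^{(b)}_{n_b}$ has the same distribution as $G(K^A_{n_1}\times\ldots\times K^A_{n_b})$; this uses the facts, to be recorded in Section~\ref{section:facenumbers_walks_bridges}, that $G_A K_n^A$ is distributed as the convex hull of a Gaussian bridge, that Minkowski sum corresponds to the Cartesian product of the parameter simplices, and that applying independent Gaussian matrices to the factors is the same as applying one Gaussian matrix to the product. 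Write $K^A := K^A_{n_1}\times\ldots\times K^A_{n_b}$, an $n$-dimensional polytope (living in dimension $n+b$).

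First I would invoke the general formula for the expected number of $j$-faces of a Gaussian projection of a polytope $P$ onto $\R^d$ (the conic Crofton / Grassmann-angle formula; see e.g.\ \cite{KVZ17} or the references around~\eqref{eq:relation_grass_intr}): for $j<d$,
\begin{align*}
\E f_j(GP)=2\sum_{F\in\cF_j(P)}\ \sum_{l\ge 1}\upsilon_{d-2l+1}\big(T_F(P)\big),
\end{align*}
valid because each tangent cone $T_F(P)$ here is not a linear subspace. Here I use that the almost-sure number of $j$-faces of the random projection equals a sum over $j$-faces $F$ of $P$ of the probability that $F$ survives, which by the linear Grassmann-angle identity \eqref{eq:relation_grass_intr} equals $\gamma_{d-j}$ of the normal cone, equivalently $2\sum_{l\ge1}\upsilon_{d-2l+1}(T_F(P))$; combinatorial faces of the product $K^A$ are products of faces of the factors, and this is compatible with the product structure of tangent cones. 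Interchanging the two finite sums gives
\begin{align*}
\E f_j\big(\widetilde C^{(1)}_{n_1}+\ldots+\widetilde C^{(b)}_{n_b}\big)
=2\sum_{l\ge1}\ \sum_{F\in\cF_j(K^A)}\upsilon_{d-2l+1}\big(T_F(K^A)\big).
\end{align*}

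Then I would simply quote Theorem~\ref{theorem:prod_typeB_simplex}, which identifies the inner sum: $\sum_{F\in\cF_j(K^A)}\upsilon_k(T_F(K^A))=R_1(k,j,b,(n_1,\ldots,n_b))$ for every $k$. Substituting $k=d-2l+1$ yields precisely
\begin{align*}
\E f_j\big(\widetilde C^{(1)}_{n_1}+\ldots+\widetilde C^{(b)}_{n_b}\big)=2\sum_{l\ge1}R_1\big(d-2l+1,j,b,(n_1,\ldots,n_b)\big),
\end{align*}
which is the claim; the sum over $l$ is finite since $R_1(k,j,\cdot,\cdot)=0$ for $k<j$ and also vanishes for $k>n$ or $k$ of the wrong sign, so convergence is not an issue. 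Since Theorem~\ref{theorem:sum_walks} (the random-walk case) has the identical right-hand side, I would remark that the two theorems coincide because $\cF_j$ of the type-$A$ and type-$B$ products carry the same multiset of tangent-cone isometry types weighted so that the $\upsilon_k$-sums agree --- this is exactly the content of Theorem~\ref{theorem:prod_typeB_simplex}, so no separate computation is needed.

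The main obstacle is not the algebra but making the reduction in the first paragraph fully rigorous: one must verify carefully that the Minkowski sum of the bridge hulls is genuinely the image of the product $K^A$ under a single Gaussian matrix (the bridge construction $\widetilde S^{(i)}_k = S^{(i)}_k-\frac{k}{n_i+1}S^{(i)}_{n_i+1}$ introduces a deterministic linear map, and one must check it combines correctly across blocks and that the resulting Gaussian matrix on the product has the right covariance, or that the face structure is unaffected by degeneracies), and that the general Gaussian-projection face-count formula applies with $P$ a product polytope whose faces and tangent cones decompose as claimed. Once that dictionary is in place --- which is the job of Section~\ref{section:facenumbers_walks_bridges} --- the theorem is an immediate corollary of Theorem~\ref{theorem:prod_typeB_simplex}.
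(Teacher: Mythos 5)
Your proposal is correct and follows essentially the same route as the paper: identify $\widetilde C^{(1)}_{n_1}+\ldots+\widetilde C^{(b)}_{n_b}$ in distribution with $G_AK^A$ for a Gaussian matrix $G_A\in\R^{d\times(n+b)}$, apply the Affentranger--Schneider projection formula in the form~\eqref{eq:proj_lemma_alt2}, and plug in Theorem~\ref{theorem:prod_typeB_simplex} with $k=d-2l+1$. The only point you flag but leave open --- the fact that $K^A$ is merely $n$-dimensional inside $\R^{n+b}$ --- is dispatched in the paper by one remark: apply Lemma~\ref{lemma:facenumber_polytopes} within the ambient subspace $\lin K^A$, which is legitimate because Grassmann angles do not depend on the dimension of the ambient space.
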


For a single convex hull ($b=1$), the expected number of $j$-faces of $C^{(1)}_n$ and $\widetilde{C}^{(1)}_{n}$ is already known (in a more general case), see~\cite[Theorems 1.2 and 5.1]{KVZ17}, and given by  
\begin{align*}
\E f_j(C^{(1)}_n)=\E f_j(\widetilde{C}_{n}^{(1)})=\frac{2\cdot j!}{n!}\sum_{l=0}^\infty\stirling{n+1}{d-2l}\stirlingsec{d-2l}{j+1}.
\end{align*}
This formula is a special case of  Theorems~\ref{theorem:sum_walks} and~\ref{theorem:sum_brides} with $b=1$ and $n_1$ replaced by $n$.

%Again, we may also formulate a result for the expected $j$-faces of the Minkowski sum of the convex hulls of both Gaussian random walks and random bridges.
%
%\begin{theorem}\label{theorem:sum_walks&brides}
%Let $1\le j<d\le n+m???$. be given and $C^{(1)}_{n_1},\ldots,C^{(b)}_{n_b}$ and $\widetilde{C}^{(1)}_{n_1},\ldots,\widetilde{C}^{(a)}_{n_b}$ defined as above. Then,
%\begin{align*}
%&\E f_j(C^{(1)}+\ldots+C^{(b)}+\widetilde{C}^{(1)}+\ldots+\widetilde{C}^{(a)})\\
%&	\quad=\binom{n+m+a+b}{j+a+b}-\frac{2\cdot j!}{(n+m)!}\sum_{l=0}^\infty\stirling{n+m+a+b}{d+a+b+2l+1}_{a+b}\stirlingsec{d+a+b+2l+1}{j+a+b}_{a+b}\\
%&	\quad=\frac{2\cdot j!}{(n+m)!}\sum_{l\ge 1}\stirling{n+m+a+b}{d+a+b-2l+1}_{a+b}\stirlingsec{d+a+b-2l+1}{j+a+b}_{a+b}.
%\end{align*}
%\end{theorem}
%
%Theorem~\ref{theorem:sum_walks&brides} includes the Theorems~\ref{theorem:sum_walks} and~\ref{theorem:sum_brides} as the special cases $a=0$ and $b=0$, respectively. Note that $a=0$ obviously implies $m=0$, on the other hand $b=0$ implies $n=0$.
%
%
\subsection{Method of proof of Theorems~\ref{theorem:sum_walks} and~\ref{theorem:sum_brides}}\label{section:methodprood_walksbridges}

The main ingredient in the proof of the named theorems is the following lemma which is due to Affentranger and Schneider~\cite[(5)]{AS92}.

\begin{lemma}\label{lemma:facenumber_polytopes}
Let $P\subset \R^n$ be a convex polytope with non-empty interior and $G\in\R^{d\times n}$ be a Gaussian random matrix, that is, its entries are independent and standard normal random variables. Then, for all $0\le j<d\le n$ we have
\begin{align*}
\E f_j(G P)
&	=f_j(P)-\sum_{F\in F_j(P)}\gamma_d\big(T_F(P)\big),
%&	=f_j(P)-2\sum_{F\in F_j(P)}\sum_{l=0}^\infty\upsilon_{d+2l+1}\big(T_F(P)\big)\\
%&	=2\big(\upsilon_{d-1}\big(T_F(P)\big)+\upsilon_{d-3}\big(T_F(P)\big)+\ldots\big)
\end{align*}
where the Grassmann angles $\gamma_d$ were defined in~\eqref{eq:grassmann_angle_def}.
\end{lemma}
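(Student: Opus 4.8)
### Proof proposal for Lemma~\ref{lemma:facenumber_polytopes}

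The plan is to invoke the classical Affentranger--Schneider projection formula, which expresses the expected number of $j$-faces of a random orthogonal projection of a polytope in terms of its internal data. Concretely, if $P \subset \R^n$ is a polytope and $L$ is a uniformly random $d$-dimensional linear subspace of $\R^n$ (with $\Pi_L$ the orthogonal projection onto $L$), then
\begin{align*}
\E f_j(\Pi_L P) = f_j(P) - 2\!\!\sum_{\substack{F \in \cF_j(P)}} \sum_{i \ge 0} \beta(F, P)\,\gamma\big(N_F(P) \cap W\big)\text{-type terms},
\end{align*}
but more usefully in the packaged form that already appears in the literature: $\E f_j(\Pi_L P) = f_j(P) - \sum_{F \in \cF_j(P)} \gamma_{n-d}\big(N_F(P)\big)$ after rewriting via Grassmann angles. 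The first step is therefore to recall this formula and translate its combinatorial content into the language of the Grassmann angle $\gamma_k(\cdot)$ introduced in~\eqref{eq:grassmann_angle_def}. Since a polytope $P$ with non-empty interior in $\R^n$ has $f_j(\Pi_L P) \le f_j(P)$, with equality failing precisely when the $(n-d)$-dimensional kernel of the projection meets a normal cone nontrivially, the ``defect'' $f_j(P) - \E f_j(\Pi_L P)$ should be exactly the expected number of $j$-faces of $P$ whose normal cone is hit by a uniform random $(n-d)$-subspace.

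Next I would reduce the Gaussian matrix $G \in \R^{d \times n}$ to this projection picture. Since $G$ has i.i.d.\ standard normal entries, its row space $L := \operatorname{row}(G)$ is almost surely $d$-dimensional and, by the rotational invariance of the Gaussian distribution, is uniformly distributed on the Grassmannian of $d$-planes in $\R^n$. Moreover $GP$ is linearly isomorphic (via the isomorphism $\R^d \to L$ induced by $G^\top$ restricted to $L$, which is a.s.\ invertible) to $\Pi_L P$, and linear isomorphisms preserve the face lattice, hence $f_j(GP) = f_j(\Pi_L P)$ almost surely. Thus $\E f_j(GP) = \E f_j(\Pi_L P)$, and the Affentranger--Schneider formula applies verbatim.

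The remaining step is to match the Grassmann-angle index. The kernel of the projection $\Pi_L$ is $L^\perp$, an $(n-d)$-dimensional uniform subspace, and a $j$-face $F$ is ``lost'' under projection iff $L^\perp$ meets the normal cone $N_F(P)$ only at the origin is violated --- i.e.\ the face survives iff $L^\perp \cap N_F(P) = \{0\}$. By the very definition~\eqref{eq:grassmann_angle_def} with $k = d$, since $W_{n-d}$ has the distribution of $L^\perp$, the probability that $N_F(P)$ is hit is $\gamma_d(N_F(P))$. Summing over all $F \in \cF_j(P)$ and using linearity of expectation gives the contribution $\sum_{F \in \cF_j(P)} \gamma_d(N_F(P))$ to be subtracted from $f_j(P)$. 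Finally one rewrites $\gamma_d(N_F(P)) = \gamma_d(T_F(P))$: this uses the duality between tangent and normal cones together with the known symmetry $\gamma_k(C^\circ) = \gamma_{n-k}(C)$ of Grassmann angles --- but one must be careful, since the precise statement in the lemma has $\gamma_d(T_F(P))$, so I would instead directly identify, face by face, that a $j$-face is destroyed precisely when $L \cap T_F(P)$ (rather than $L^\perp \cap N_F(P)$) behaves in the relevant way, which is the cleaner route and avoids an ambient-dimension bookkeeping issue. The main obstacle is exactly this last index-matching bookkeeping: getting the dimension of the random subspace, the choice of tangent versus normal cone, and the index of the Grassmann angle all mutually consistent, and checking that the degenerate/boundary cases (faces of $P$ collapsing onto lower-dimensional faces of the projection, or onto the boundary of $\Pi_L P$) contribute exactly as the formula predicts and with the correct multiplicity. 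This is precisely the content of~\cite[(5)]{AS92}, so I would lean on that reference for the delicate part and present the reduction above as the proof.
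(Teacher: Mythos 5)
Your overall route is the paper's own: quote the Affentranger--Schneider projection formula and reduce the Gaussian image $GP$ to an orthogonal projection $\Pi_L P$ onto a uniformly distributed $d$-dimensional subspace. That reduction is correct and is in effect a direct proof of the step the paper delegates to Baryshnikov--Vitale: the row space $L$ of $G$ is Haar-distributed by rotational invariance, $\ker G=L^{\perp}$, and $G$ restricted to $L$ is a.s.\ invertible, so $f_j(GP)=f_j(\Pi_L P)$ a.s. The genuine gap is in the step you yourself call delicate, namely identifying the per-face loss probability with $\gamma_d(T_F(P))$, and every version of this that you write down is wrong. (i) A $j$-face $F$ is \emph{not} lost ``precisely when the kernel meets a normal cone nontrivially'': the correct criterion is that $F$ survives iff $L$ meets $\relint N_F(P)$, which is a.s.\ equivalent to $L^{\perp}\cap T_F(P)=\{0\}$; hence the loss probability is $\PP(W_{n-d}\cap T_F(P)\neq\{0\})=\gamma_d(T_F(P))$, not $\PP(L^{\perp}\cap N_F(P)\neq\{0\})$. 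A triangle in $\R^2$ with $d=1$, $j=0$ already refutes the normal-cone version: the lemma gives $3-\sum_v\theta_v/\pi=2$ expected vertices, whereas ``kernel hits normal cone'' gives defect $\sum_v(\pi-\theta_v)/\pi=2$, i.e.\ a segment with a single endpoint. (ii) The ``known symmetry'' $\gamma_k(C^{\circ})=\gamma_{n-k}(C)$ is false: from $\upsilon_i(C^{\circ})=\upsilon_{n-i}(C)$, \eqref{eq:relation_grass_intr} and \eqref{eq:sum_odd_Intr_vol} one gets $\gamma_k(C^{\circ})=1-\gamma_{n-k}(C)$ for $C$ not a subspace, so your packaged identity $\E f_j(\Pi_LP)=f_j(P)-\sum_F\gamma_{n-d}(N_F(P))$ is exactly the complement $\sum_F\gamma_d(T_F(P))$ of the correct right-hand side. (iii) Your proposed repair, that $F$ is destroyed according to how ``$L\cap T_F(P)$'' behaves, again picks the wrong subspace: it is the $(n-d)$-dimensional kernel $L^{\perp}$, not the $d$-dimensional image $L$, that must hit $T_F(P)$.

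Leaning on \cite[(5)]{AS92} for ``the delicate part'' does not close this gap, because that formula is stated in internal/external-angle form, $\E f_j(\Pi_L P)=f_j(P)-2\sum_{s\ge 0}\sum_{G\in\cF_{d+1+2s}(P)}\sum_{F\in\cF_j(G)}\beta(F,G)\gamma(G,P)$, and the translation into Grassmann angles of tangent cones is precisely the bookkeeping you garble. Two clean ways to finish: either prove the survival criterion directly ($F$ is lost iff $L^{\perp}\cap T_F(P)\neq\{0\}$; since $L^{\perp}\sim W_{n-d}$, definition \eqref{eq:grassmann_angle_def} with $k=d$ gives the lemma at once), or convert the Affentranger--Schneider sum using $\upsilon_k(T_F(P))=\sum_{G\in\cF_k(P),\,G\supseteq F}\beta(F,G)\gamma(G,P)$ together with \eqref{eq:relation_grass_intr}, which yields the defect $\sum_{F\in\cF_j(P)}\gamma_d(T_F(P))$ directly. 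With either correction, your reduction from $G$ to $\Pi_L$ then completes the proof along the same lines as the paper.
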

In fact, Affentranger and Schneider~\cite{AS92} proved this formula for a random orthogonal projection of $P$, while the fact that Gaussian matrices yield the same result follows from a result of Baryshnikov and Vitale~\cite{BV94}.
Due to the relation between Grassmann angles and conic intrinsic volumes stated in~\eqref{eq:relation_grass_intr}, the lemma can be written as
\begin{align*}%\label{eq:proj_lemma_alt1}
\E f_j(G P)=f_j(P)-2\sum_{F\in F_j(P)}\sum_{l=0}^\infty\upsilon_{d+2l+1}\big(T_F(P)\big).
\end{align*}
Using~\eqref{eq:sum_odd_Intr_vol}, it also follows that under the conditions of Lemma~\ref{lemma:facenumber_polytopes}  we have 
\begin{align}\label{eq:proj_lemma_alt2}
\E f_j(G P)=2\sum_{F\in F_j(P)}\big(\upsilon_{d-1}\big(T_F(P)\big)+\upsilon_{d-3}\big(T_F(P)\big)+\ldots\big).
\end{align}

Now take a Gaussian matrix $G_B=(\xi_{i,j})\in\R^{d\times n}$, where $\xi_{i,j}$, $i\in\{1,\ldots,d\},j\in\{1,\ldots,n\}$ are independent standard Gaussian random variables. Then, we claim that $G_BK^B$ has the same distribution as the Minkowski $C^{(1)}_{n_1}+\ldots+C^{(b)}_{n_b}$.  Similarly, for a Gaussian matrix $G_A\in\R^{d\times (n+b)}$, we claim that $G_AK^A$ has the same distribution as $\widetilde{C}^{(1)}_{n_1}+\ldots+\widetilde{C}^{(b)}_{n_b}$.

In order to see this, consider the case of a single Schl\"afli orthoscheme $K_{n_1}^B$  first. Let $G_B^{(1)}=(\xi_{i,j})\in\R^{d\times n_1}$ be a Gaussian matrix. We know that the Schl\"afli orthoscheme $K_{n_1}^B$ is the simplex given as the  convex hull of the $n_1$-dimensional vectors
$$
(0,0,0,\ldots,0),(1,0,0\ldots,0),(1,1,0,\ldots,0),\ldots,(1,1,1\ldots,1).
$$
Thus, we obtain
\begin{align*}
G_B^{(1)} K_{n_1}^B
&	=\conv\big\{G_B^{(1)}(0,\ldots,0)^\top,G_B^{(1)}(1,0,\ldots,0)^\top,G_B^{(1)}(1,1,0,\ldots,0)^\top,\ldots,G_B^{(1)}(1,\ldots,1)^\top\big\}\\
&	=\conv\left\{\begin{pmatrix}0 \\ \vdots \\ 0 \end{pmatrix} ,\begin{pmatrix}\xi_{1,1} \\ \vdots \\ \xi_{d,1} \end{pmatrix},\begin{pmatrix}\xi_{1,1}+\xi_{1,2} \\ \vdots \\ \xi_{d,1}+\xi_{d,2} \end{pmatrix},\ldots,\begin{pmatrix}\xi_{1,1}+\ldots+\xi_{1,n_1} \\ \vdots \\ \xi_{d,1}+\ldots+\xi_{d,n_1} \end{pmatrix}				\right\}\\
&	\eqdistr \conv\big\{S_0^{(1)},S_1^{(1)},\ldots,S_{n_1}^{(1)}\big\}\\
&	=C_{n_1}^{(1)},
\end{align*}
where $\eqdistr$ denotes the distributional equality of two random elements.

Similarly, we consider a Gaussian matrix $G_A^{(1)}\in\R^{d\times (n_1+1)}$ for the Schl\"afli orthoscheme $K_{n_1}^A$. We know that $K_{n_1}^A$ is the convex hull of the $(n_1+1)$-dimensional vectors $P_0=(0,0,\ldots,0)$ and
\begin{align*}
P_i=(\underbrace{1,\ldots,1}_{i},0,\ldots,0)-\frac{i}{n_1+1}(1,1,\ldots,1),\quad 1\le i\le n_1.
\end{align*}
Thus, in the same way we get
\begin{align*}
G_A^{(1)} K_{n_1}^A
&	=\conv\{G_A^{(1)}P_0,G_A^{(1)}P_1,\ldots,G_A^{(1)}P_{n_1}\}\\
&	\eqdistr\conv\big\{0,S_1^{(1)}-\frac{1}{n_1+1}S_{n_1+1}^{(1)},S_{2}^{(1)}-\frac{2}{n_1+1}S_{n_1+1}^{(1)},\ldots,S_{n_1}^{(1)}-\frac{n_1}{n_1+1}S_{n_1+1}^{(1)}\big\}\\
&	\eqdistr\conv\big\{\widetilde{S}^{(1)}_0,\widetilde{S}^{(1)}_1,\widetilde{S}^{(1)}_2,\ldots,\widetilde{S}^{(1)}_{n_1}\big\}\\
&	=\widetilde{C}^{(1)}_{n_1}.
\end{align*}

The product case follows from the following observation. Let $G_B\in\R^{d\times n}$ be a Gaussian matrix and $n_1+\ldots+n_b=n$. Then, we can represent $G_B$ as the row of  $b$ independent matrices $G_B=(G_B^{(1)},\ldots,G_B^{(b)})$, where $G_B^{(i)}\in\R^{d\times n_i}$ is itself a Gaussian matrix for $i=1,\ldots,b$. We can represent each vector $x\in\R^n$ as the column of $b$ vectors, i.e.~$x=(x^{(1)},\ldots,x^{(b)})^\top$, where $x^{(i)}\in\R^{n_i}$ for $i=1,\ldots,b$. Then, we easily observe that
\begin{align*}
G_Bx=(G_B^{(1)},\ldots,G_B^{(b)})(x^{(1)},\ldots,x^{(b)})^\top=G_B^{(1)}x^{(1)}+\ldots+G_B^{(b)}x^{(b)}.
\end{align*}
It follows that
\begin{align*}
G_BK^B
	=(G_B^{(1)},\ldots,G_B^{(b)})(K_{n_1}^B\times\ldots\times K_{n_b}^{(b)})
	=G_B^{(1)}K_{n_1}^B+\ldots+G_B^{(b)}K_{n_b}^B
	\eqdistr C^{(1)}_{n_1}+\ldots+C^{(b)}_{n_b}.
\end{align*}
In the same way we obtain for a Gaussian matrix $G_A\in\R^{d\times (n+b)}$ with $n_1+\ldots+n_b=n$ that
\begin{align*}
G_AK^A\eqdistr \widetilde{C}^{(1)}_{n_1}+\ldots+\widetilde{C}^{(b)}_{n_b}.
\end{align*}
Now, we can finally prove  Theorems~\ref{theorem:sum_walks} and~\ref{theorem:sum_brides}.

\begin{proof}[Proof of Theorem~\ref{theorem:sum_walks}]
Let $1\le j<d\le n$ and $G_B\in\R^{d\times n}$ be a Gaussian matrix. As we already observed,  $G_BK^B\eqdistr C^{(1)}_{n_1}+\ldots+C^{(b)}_{n_b}$. Thus,~\eqref{eq:proj_lemma_alt2} yields
%We already saw in the proof of Corollary~\ref{cor:stirlingnumbers_sum} (\red{wenn das rausgenommen wird muss ich den Beweis hier machen!}) that the number of $j$-faces of $K^B$ is given by $\binom{n+b}{j+b}$????.
%
%
%The $j$-faces of $K_{n_1}^B$ are for example obtained by choosing any ${j+1}$ from the $n+1$ vertexes $(0,\ldots,0),(1,0,\ldots,0),(1,1,0,\ldots,0),(1,\ldots,1)$ in $\R^{n_1}$. Thus, we have $f_j(K_{n_1}^B)=\binom{n_1+1}{j+1}$. Since each face of $K^B$ is the product of individual faces of $K_{n_1}^B,\ldots,K_{n_b}^B$, the number of $j$-faces of $K^B$ is given by
%\begin{align*}
%f_j(K^B)
%&	=\sum_{\substack{j_1,\ldots,j_b\in\N_0:\\j_1+\ldots+j_b=j}}f_{j_1}(K_{n_1}^B)\cdot\ldots\cdot f_{j_b}(K_{n_b}^B)\\
%&	=\sum_{\substack{j_1,\ldots,j_b\in\N_0:\\j_1+\ldots+j_b=j}}\binom{n_1+1}{j_1+1}\cdot\ldots\cdot\binom{n_b+1}{j_b+1}\\
%&	=\binom{n+b}{j+b}
%\end{align*}
%using a generalization of Vandermonde's convolution.
%Furthermore, Theorem~\ref{theorem:prod_typeB_simplex} yields that
%\begin{align*}
%2\sum_{F\in\cF_j(P)}\sum_{s=0}^\infty\upsilon_{d+2s+1}\big((T_F(K^B)\big)=2\sum_{l=0}^\infty R^B(d+2l+1,j,b,n_1,\ldots,n_b).
%\end{align*}
%Inserting both formulas in~\eqref{eq:formula1} yields the first equation of Theorem~\ref{theorem:sum_walks}.
\begin{align*}
\E f_j(C^{(1)}_{n_1}+\ldots+C^{(b)}_{n_b})
&	=\E f_j(G_BK^B)\\
&	=2\sum_{F\in \cF_j (K^B)} (\upsilon_{d-1}(T_F(K^B))+\upsilon_{d-3}(T_F(K^B))+\ldots)\\
&	=2\sum_{l\ge 1}R_1(d-2l+1,j,b,(n_1,\ldots,n_b)),
\end{align*}
where we used Theorem~\ref{theorem:prod_typeB_simplex} in the last step.
\end{proof}

\begin{proof}[Proof of Theorem~\ref{theorem:sum_brides}]
%Let $B\in\R^{m\times d}$ be a Gaussian matrix. We already saw that $BK^A\eqdistr \widetilde{C}^{(1)}_{n_1}+\ldots+\widetilde{C}^{(a)}_{n_b}$, but we can not apply Lemma~\ref{lemma:facenumber_polytopes} to $K^A\subset\R^{m+a}$ since it is $m$-dimensional. Instead of considering the Schl\"afli orthoschemes $K_{n_1}^A,\ldots,K_{m_b}^A$, we will consider the unbounded sets $\widetilde{K}_{n_1}^A,\ldots,\widetilde{K}_{n_b}^A$, where for example
%\begin{align*}
%\widetilde{K}_{n_1}^A:=\{x\in\R^{n_1+1}:x_1\ge x_2\ge \ldots\ge x_{n_1+1},x_1-x_{n_1+1}\le 1\}=K_{n_1}^A\oplus L_{n_1+1},
%\end{align*}
%where $L_{n_1+1}:=\{x\in\R^{n_1+1}:x_1=\ldots=x_{n_1+1}\}$. Then, $\widetilde{K}^A=\widetilde{K}^A_{n_1} times\ldots\times \widetilde{K}^A_{n_b}$ is a
Let $1\le j<d\le n$ and let $G_A\in\R^{d\times (n+b)}$ be a Gaussian matrix. We have already seen that $G_AK^A\eqdistr \widetilde{C}^{(1)}_{n_1}+\ldots+\widetilde{C}^{(b)}_{n_b}$. Although the polytope $K^A\subset\R^{n+b}$ is only $n$-dimensional, we can still apply Lemma~\ref{lemma:facenumber_polytopes}, and therefore also~\eqref{eq:proj_lemma_alt2}, to the ambient linear subspace $\lin K^A$ since the Grassmann angles do not depend on the dimension of the ambient linear subspace. Combining this with Theorem~\ref{theorem:prod_typeB_simplex}, we obtain
\begin{align*}
\E f_j(\widetilde{C}^{(1)}_{n_1}+\ldots+\widetilde{C}^{(b)}_{n_b})
&	=\E f_j(G_AK^A)\\
&	=2\sum_{F\in \cF_j (K^B)} (\upsilon_{d-1}(T_F(K^A))+\upsilon_{d-3}(T_F(K^A))+\ldots)\\
&	=2\sum_{l\ge 1}R_1(d-2l+1,j,b,(n_1,\ldots,n_b)),
\end{align*}
which completes the proof.
\end{proof}

\subsection{Application to compressed sensing}\label{section:compressed_sensing}
Let us briefly mention an application of Theorems~\ref{theorem:Schlaefli_typeB_simplex} and~\ref{theorem:B-chamber} to compressed sensing. Donoho and Tanner~\cite{donoho_tanner_orthants} have considered the following problem. Let $x=(x_1,\ldots,x_n)$ be an unknown signal belonding to some  polyhedral set $P\subset \R^n$ and let $G:\R^n\to \R^k$ be a Gaussian matrix, where $k\leq n$. We would like to recover the signal $x$ from its image $y=Gx$. Following Donoho and Tanner~\cite{donoho_tanner_orthants}, we denote the event that such a recovery is uniquely possible by
\begin{align*}
\text{Unique}(G,x,P): \quad\text{The system $y=Gx'$ has a unique solution $x'=x$ in $P$}.
\end{align*}
%More generally,  if $\R^n_+$ is replaced by an arbitrary polyhedral set $P\subseteq\R^n$, the corresponding event is denoted by $\text{Unique}(G,x,P)$.
The  recovery is uniquely possible if and only if $(x+\ker G) \cap P = \{x\}$.
Assume that $x\in \relint F$ for some $j$-face $F\in\cF_j(P)$ of $P$. Then,
the following equivanelce holds:
%the event $\text{Unique}(G,x,P)$ of unique recovery of $x$ from its image $y=Gx$ satisfies the following equivalence:
\begin{align}\label{eq:equivalence_unique}
\text{Unique}(G,x,P)\Leftrightarrow\ker G\cap T_F(P)=\{0\}.
\end{align}
Using this observation, Donoho and Tanner~\cite{donoho_tanner_orthants} computed the probability of unique recovery explicitly in the cases when $P=\R_+^n$ is the non-negative orthant or $P=[0,1]^n$ is the unit cube. By the way, the equivalence \eqref{eq:equivalence_unique} was stated by Donoho and Tanner~\cite[Lemmas 2.1,5.1]{donoho_tanner_orthants} in these special cases, but it easily generalizes to any polyhedral set.
We are now going to use Theorems~\ref{theorem:Schlaefli_typeB_simplex} and~\ref{theorem:B-chamber} to compute the probabilities of unique signal recovery in the case when $P$ is a Weyl chamber or a Schl\"afli orthoscheme, which corresponds to natural isotonic constraints frequently imposed in statistics.
%We start with the Weyl chambers.

\subsubsection*{Weyl chambers}
We consider the following model for a random signal $x$ that belongs to the Weyl chamber $B^{(n)}$ of type $B$. Let $0 <  j \le n$ be given together with $j$ positive numbers $a_1,\ldots, a_j$.  Select a random and uniform subset $\{i_1,\ldots,i_j\}\subseteq\{1,\ldots,n\}$ with $1\le i_1<\ldots<i_j\le n$. Now, define the corresponding $j$-face $F^B(i_1,\ldots,i_j)$ of $B^{(n)}$ by
$$
F^B(i_1,\ldots,i_j):=\{x\in\R^n:x_1=\ldots=x_{i_1}\ge \ldots\ge x_{i_{j-1}+1}=\ldots=x_{i_j}\ge x_{i_j+1}=\ldots=x_n=0\}
$$
and consider the signal $x=(x_1,\ldots,x_n)$ given by
\begin{align*}
x_m=\sum_{l:\,i_l\ge m} a_l, \quad m=1,\ldots,n.
\end{align*}
Then, by construction, $F^B(i_1,\ldots,i_j)$ is random and uniformly distributed on $\cF_j(B^{(n)})$. Moreover,  $x$ belongs to $\relint F^B(i_1,\ldots,i_j)$. %This yields the following proposition.

\begin{proposition}\label{prop:unique_weyl_B}
Let $0  \leq  j \leq  k \le n$ and let $x\in\R^n$ be a random signal constructed as above (where we put $x=0$ if $j=0$). If $G:\R^n\to \R^k$ is a Gaussian random matrix, then it holds that
\begin{align*}
\bP\big[\textup{Unique}\big(G,x,B^{(n)}\big)\big]=
\frac{2^{j+1}j!}{2^nn!\binom nj}\sum_{i=1,3,5,\ldots}\stirlingb n{k - i}\stirlingsecb{k-i}j.
\end{align*}
\end{proposition}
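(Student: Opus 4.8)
The plan is to combine the equivalence \eqref{eq:equivalence_unique} with the definition \eqref{eq:grassmann_angle_def} of the Grassmann angle, and then to express the resulting Grassmann angle of a tangent cone of $B^{(n)}$ in terms of its conic intrinsic volumes via \eqref{eq:relation_grass_intr}. First I would condition on the random $j$-face: since $F^B(i_1,\ldots,i_j)$ is uniform on $\cF_j(B^{(n)})$ and $x\in\relint F^B(i_1,\ldots,i_j)$, the equivalence \eqref{eq:equivalence_unique} gives
\begin{align*}
\bP\big[\textup{Unique}\big(G,x,B^{(n)}\big)\big]
=\frac{1}{\binom nj}\sum_{F\in\cF_j(B^{(n)})}\bP\big[\ker G\cap T_F(B^{(n)})=\{0\}\big].
\end{align*}
The kernel of a Gaussian matrix $G:\R^n\to\R^k$ is an $(n-k)$-dimensional linear subspace whose distribution is uniform on the Grassmannian (a standard fact), so $\bP[\ker G\cap T_F(B^{(n)})=\{0\}]=1-\gamma_k(T_F(B^{(n)}))$ by the definition \eqref{eq:grassmann_angle_def} of $\gamma_k$.

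Next I would substitute the relation \eqref{eq:relation_grass_intr}, namely $\gamma_k(C)=2\sum_{i=1,3,5,\ldots}\upsilon_{k+i}(C)$, valid since the tangent cones $T_F(B^{(n)})$ are not linear subspaces. Using the identity \eqref{eq:sum_odd_Intr_vol} (or directly \eqref{eq:relation_grass_intr} at $k=0$, together with the total mass $1$), one has $1-\gamma_k(C)=2\sum_{i=1,3,5,\ldots}\upsilon_{k-i}(C)$; indeed $1=2(\upsilon_0+\upsilon_2+\cdots)$ and $\gamma_k(C)=2(\upsilon_{k+1}+\upsilon_{k+3}+\cdots)$, so their difference telescopes to $2(\upsilon_{k-1}+\upsilon_{k-3}+\cdots)$. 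Hence
\begin{align*}
\bP\big[\textup{Unique}\big(G,x,B^{(n)}\big)\big]
=\frac{2}{\binom nj}\sum_{i=1,3,5,\ldots}\sum_{F\in\cF_j(B^{(n)})}\upsilon_{k-i}\big(T_F(B^{(n)})\big).
\end{align*}

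Finally I would invoke Theorem~\ref{theorem:B-chamber}, which evaluates the inner sum: $\sum_{F\in\cF_j(B^{(n)})}\upsilon_{k-i}(T_F(B^{(n)}))=\frac{2^jj!}{2^nn!}\stirlingb n{k-i}\stirlingsecb{k-i}j$. Plugging this in and pulling the constants out of the sum over $i$ yields
\begin{align*}
\bP\big[\textup{Unique}\big(G,x,B^{(n)}\big)\big]
=\frac{2^{j+1}j!}{2^nn!\binom nj}\sum_{i=1,3,5,\ldots}\stirlingb n{k-i}\stirlingsecb{k-i}j,
\end{align*}
as claimed; the degenerate cases $j=0$ (so $x=0$, and recovery is unique iff $\ker G\cap B^{(n)}=\{0\}$) should be checked separately but reduce to the same formula. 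The main obstacle I anticipate is the bookkeeping around \eqref{eq:relation_grass_intr}: one must be careful that the formula $1-\gamma_k(C)=2(\upsilon_{k-1}(C)+\upsilon_{k-3}(C)+\cdots)$ is applied to cones that are genuinely not linear subspaces (here true, since $T_F(B^{(n)})$ always contains the halfline constraint coming from $x_{i_j}\ge 0$ when $i_j=n$ or an $A$-type factor otherwise — but in every case it is a proper product of Weyl chambers of type $A$ and $B$, none of which is a subspace), and that the index shift $i\mapsto k-i$ keeps the summation over odd positive $i$ with the convention $\upsilon_m=0$ for $m<0$ (equivalently $\stirlingb n{m}=\stirlingsecb{m}{j}=0$ for $m<0$), so the sum is automatically finite.
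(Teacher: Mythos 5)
Your argument is correct and follows essentially the same route as the paper's proof: condition on the uniformly chosen $j$-face, apply the equivalence \eqref{eq:equivalence_unique}, identify $\ker G$ with a uniformly distributed $(n-k)$-dimensional subspace so that the relevant probability becomes $1-\gamma_k\big(T_F(B^{(n)})\big)$, rewrite this via \eqref{eq:relation_grass_intr} and \eqref{eq:sum_odd_Intr_vol} as $2\big(\upsilon_{k-1}+\upsilon_{k-3}+\cdots\big)$, and conclude with Theorem~\ref{theorem:B-chamber}. The only quibble is your parenthetical claim that no Weyl-chamber factor of $T_F(B^{(n)})$ is a linear subspace (note $A^{(1)}=\R$ and $B^{(0)}=\{0\}$ are); what actually matters is that the full product is not a subspace, which holds precisely for $j<n$, and the paper's own proof passes over this point silently as well.
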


\begin{proof}
Using the above equivalence~\eqref{eq:equivalence_unique} and the construction of $x$, we obtain that
\begin{align*}
\bP\big[\textup{Unique}\big(G,x,B^{(n)}\big)\big]
&	=\frac{1}{\binom nj}\sum_{1\le i_1<\ldots<i_j\le n}\bP\big[\ker G\cap T_{F^B(i_1,\ldots,i_j)}(B^{(n)})=\{0\}\big].
%&	=\frac{1}{f_j(B^{(n)})}\sum_{F\in \cF_j(B^{(n)})}\bP\big[\ker G\cap T_F(B^{(n)})=\{0\}\big]\\
%&	=1-\frac{1}{\binom nj}\sum_{F\in \cF_j(B^{(n)})}\bP\big[\ker G\cap T_F(B^{(n)})\neq\{0\}\big].
\end{align*}
Since $\ker G$ is rotationally invariant, and thus, a uniformly distributed $(n-k)$-dimensional linear subspace, we conclude that the probability on the right-hand side coincides with $1 - \gamma_k(T_F(B^{(n)}))$, where $\gamma_k(T_F(B^{(n)}))$ denotes the $k$-th Grassmann angle  of $T_F(B^{(n)})$. Thus, relation~\eqref{eq:relation_grass_intr}  yields
\begin{align*}
\bP\big[\textup{Unique}\big(G,x,B^{(n)}\big)\big]
&	=\frac{1}{\binom nj}\sum_{F\in \cF_j(B^{(n)})} (1 - \gamma_k\big(T_F(B^{(n)})\big))\\
&	=\frac{2}{\binom nj} \sum_{i=1,3,5,\ldots}\sum_{F\in \cF_j(B^{(n)})}\upsilon_{k-i}\big(T_F(B^{(n)})\big)\\
&   =\frac{2^{j+1}j!}{2^nn!\binom nj}\sum_{i=1,3,5,\ldots}\stirlingb n{k - i}\stirlingsecb{k-i}j,
\end{align*}
where the last equality follows from Theorem~\ref{theorem:B-chamber}.
\end{proof}

In the $A$-case, the construction is analogous. Let $0 <  j \le n$ and let $a_1,\ldots,a_{j}$ be positive numbers. Select a random and uniform subset $\{i_1,\ldots,i_{j-1}\}\subseteq \{1,\ldots,n-1\}$ with $1\le i_1<\ldots<i_{j-1}\le n-1$, put $i_j:=n$,  and define the corresponding $j$-face $F^A(i_1,\ldots,i_{j-1})$ of $A^{(n)}$ by
\begin{align*}
F^A(i_1,\ldots,i_{j-1}):=\{x\in\R^n:x_1=\ldots=x_{i_1}\ge \ldots\ge x_{i_{j-1}+1}=\ldots=x_n\},
\end{align*}
which is uniformly distributed in $\cF_j(A^{(n)})$.
We define the random signal $x=(x_1,\ldots,x_n)$ by $x_m=\sum_{l:i_l\ge m} a_l$, for $m=1,\ldots,n$. Then, $x$  belongs to the relative interior of $F^A(i_1,\ldots,i_{j-1})$. This yields the following proposition, which is analogous to the $B$-case and can be proven in a similar way.

\begin{proposition}
%Let $x\in\R^d$ be a random signal constructed as above, let $A:\R^n\to \R^k$ be a Gaussian random matrix. Then, for $0\le j<k\le n$, it holds that
Let $0  <  j \leq  k \le n$ and let $x\in\R^n$ be a random signal constructed as above. If $G:\R^n\to \R^k$ is a Gaussian random matrix, then it holds that
\begin{align*}
\bP\big[\textup{Unique}\big(G,x,A^{(n)}\big)\big]=\frac{2\cdot j!}{n!\binom {n-1}{j-1}}\sum_{i=1,3,5,\ldots}\stirling {n}{k-i}\stirlingsec{k-i}{j}.
\end{align*}
\end{proposition}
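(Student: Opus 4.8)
The plan is to mirror the proof of Proposition~\ref{prop:unique_weyl_B} almost line by line, with the type $B$ Weyl chamber replaced by $A^{(n)}$. First I would record the two elementary facts underlying the construction of $x$: the $j$-faces of $A^{(n)}$ are precisely the sets $F_J$ indexed by $J=\{i_1,\ldots,i_{j-1}\}$ with $1\le i_1<\ldots<i_{j-1}\le n-1$, so $|\cF_j(A^{(n)})|=\binom{n-1}{j-1}$ and the prescribed uniform choice of $J$ makes $F^A(i_1,\ldots,i_{j-1})$ uniform on $\cF_j(A^{(n)})$; and, since the $a_l$ are strictly positive and $i_j:=n$, the signal $x$ with $x_m=\sum_{l:\,i_l\ge m}a_l$ satisfies the chain $x_1=\ldots=x_{i_1}>x_{i_1+1}=\ldots=x_{i_2}>\ldots>x_{i_{j-1}+1}=\ldots=x_n$ with all block-to-block inequalities strict, hence $x\in\relint F^A(i_1,\ldots,i_{j-1})$.

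Next I would apply the equivalence~\eqref{eq:equivalence_unique} with $P=A^{(n)}$ and condition on $J$, so that
\begin{align*}
\bP\big[\textup{Unique}\big(G,x,A^{(n)}\big)\big]
=\frac{1}{\binom{n-1}{j-1}}\sum_{F\in\cF_j(A^{(n)})}\bP\big[\ker G\cap T_F(A^{(n)})=\{0\}\big].
\end{align*}
Since $G$ is Gaussian, $\ker G$ is a uniformly distributed $(n-k)$-dimensional linear subspace, so each summand equals $1-\gamma_k(T_F(A^{(n)}))$ by~\eqref{eq:grassmann_angle_def}. Each tangent cone $T_F(A^{(n)})$ is, by Lemma~\ref{lemma:A-chamber_tangent_cones}, a product of Weyl chambers $A^{(l_i)}$ and in particular not a linear subspace, so~\eqref{eq:relation_grass_intr} and~\eqref{eq:sum_odd_Intr_vol} combine to give $1-\gamma_k(C)=2\sum_{i=1,3,5,\ldots}\upsilon_{k-i}(C)$; one verifies this by splitting according to the parity of $k$ and using that the odd- and even-indexed conic intrinsic volumes each sum to $1/2$. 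Substituting the $A$-part of Theorem~\ref{theorem:B-chamber}, namely $\sum_{F\in\cF_j(A^{(n)})}\upsilon_{k-i}(T_F(A^{(n)}))=\frac{j!}{n!}\stirling{n}{k-i}\stirlingsec{k-i}{j}$, and interchanging the two finite sums yields
\begin{align*}
\bP\big[\textup{Unique}\big(G,x,A^{(n)}\big)\big]
=\frac{2}{\binom{n-1}{j-1}}\sum_{i=1,3,5,\ldots}\frac{j!}{n!}\stirling{n}{k-i}\stirlingsec{k-i}{j}
=\frac{2\cdot j!}{n!\binom{n-1}{j-1}}\sum_{i=1,3,5,\ldots}\stirling{n}{k-i}\stirlingsec{k-i}{j},
\end{align*}
which is the asserted identity (the sum is finite because $\stirlingsec{k-i}{j}=0$ once $k-i<j$).

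There is no genuine obstacle here: the argument is a routine transcription of the $B$-case, and the only points deserving a moment's care are the face count $\binom{n-1}{j-1}$ (note that it differs from the $\binom nj$ occurring in the $B$-case, and that the convention $i_j:=n$ is precisely what forces the last block of $x$ to be nonzero, hence $x$ into the relative interior of $F^A$) and the parity bookkeeping in the identity $1-\gamma_k(C)=2\sum_{i\text{ odd}}\upsilon_{k-i}(C)$.
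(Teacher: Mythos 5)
Your argument is correct and is exactly the route the paper takes: the paper proves the $B$-case (Proposition~\ref{prop:unique_weyl_B}) via the equivalence~\eqref{eq:equivalence_unique}, the identity $1-\gamma_k(C)=2\sum_{i\text{ odd}}\upsilon_{k-i}(C)$ obtained from~\eqref{eq:relation_grass_intr} and~\eqref{eq:sum_odd_Intr_vol}, and the angle-sum formula of Theorem~\ref{theorem:B-chamber}, and then states that the $A$-case follows by the same transcription, which is precisely what you carry out (including the correct face count $\binom{n-1}{j-1}$). The only caveat, shared with the paper's own $B$-case argument, is the degenerate case $j=n$, where $T_F(A^{(n)})=\R^n$ is a linear subspace so your claim ``in particular not a linear subspace'' and the use of~\eqref{eq:relation_grass_intr} break down; for $j<n$ everything is fine.
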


\subsubsection*{Schl\"afli orthoschemes}
Again, we start with the $B$-case. Let $0\le j \le n$ be given.
Take $j+1$ positive numbers $a_0,a_1,\ldots, a_j$ satisfying $a_0+\ldots+a_j=1$  and select a random and uniform subset $\{i_0,\ldots,i_j\}\subseteq\{0,\ldots,n\}$ with $0\le i_0<\ldots<i_j\le n$. Now, define  the corresponding $j$-face $S^B(i_0,\ldots,i_j)$ of $K_n^B$ by
\begin{multline*}
S^B(i_0,\ldots,i_j):=\{x\in\R^n:1=x_1=\ldots=x_{i_0}\ge x_{i_0+1}=\ldots=x_{i_1}\ge  \ldots\\\ge x_{i_{j-1}+1}=\ldots=x_{i_j}\ge x_{i_j+1}=\ldots=x_n=0\}.
\end{multline*}
Consider the signal $x=(x_1,\ldots,x_n)$ given by
\begin{align*}
x_m=\sum_{l:\,i_l\ge m} a_l,\quad m=1,\ldots,n.
\end{align*}
By construction, we have $x\in\relint S^B(i_0,\ldots,i_j)$ and $S^B(i_0,\ldots,i_j)$ is random and uniformly distributed on $\cF_j(K^B_n)$. Then, the $B$-case of Theorem~\ref{theorem:Schlaefli_typeB_simplex} yields the following proposition.

\begin{proposition}
Let $0  \leq   j \leq  k \le n$ and let $x\in\R^n$ be a random signal constructed as above. If $G:\R^n\to \R^k$ is a Gaussian random matrix, then it holds that
%Let $x\in\R^d$ be a random signal constructed as above, let $G:\R^n\to \R^k$ be a Gaussian random matrix. Then, for $0\le j<k\le n$??, it holds that
\begin{align*}
\bP\big[\textup{Unique}\big(G,x,K^B_n\big)\big] = \frac{2\cdot j!}{n!\binom {n+1}{j+1}}\sum_{i=0,2,4,\ldots}\stirling {n+1}{k - i}\stirlingsec{k- i }{j+1}.
\end{align*}
\end{proposition}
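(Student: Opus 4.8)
The plan is to mimic exactly the proof of Proposition~\ref{prop:unique_weyl_B}, replacing the Weyl chamber $B^{(n)}$ by the Schl\"afli orthoscheme $K_n^B$ and the uniform face $F^B(i_1,\ldots,i_j)$ by the uniform face $S^B(i_0,\ldots,i_j)$. First I would invoke the equivalence~\eqref{eq:equivalence_unique}: since the constructed signal $x$ lies in $\relint S^B(i_0,\ldots,i_j)$, unique recovery holds if and only if $\ker G\cap T_{S^B(i_0,\ldots,i_j)}(K_n^B)=\{0\}$. Averaging over the uniform choice of the face and using that $f_j(K_n^B)=\binom{n+1}{j+1}$ (the $j$-faces being indexed by subsets $\{i_0,\ldots,i_j\}\subseteq\{0,\ldots,n\}$), we get
\begin{align*}
\bP\big[\textup{Unique}\big(G,x,K_n^B\big)\big]=\frac{1}{\binom{n+1}{j+1}}\sum_{F\in\cF_j(K_n^B)}\bP\big[\ker G\cap T_F(K_n^B)=\{0\}\big].
\end{align*}

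Next, since $G:\R^n\to\R^k$ is Gaussian, its kernel is a uniformly distributed $(n-k)$-dimensional linear subspace of $\R^n$, so by the definition~\eqref{eq:grassmann_angle_def} of the Grassmann angle we have $\bP[\ker G\cap T_F(K_n^B)=\{0\}]=1-\gamma_k(T_F(K_n^B))$. Here one should note that the tangent cones $T_F(K_n^B)$ are not linear subspaces (their maximal linear subspace is $j$-dimensional with $j\le n$, and by Lemma~\ref{lemma:tangent_cones_typeB} they are nontrivial products of Weyl chambers), so the linear relation~\eqref{eq:relation_grass_intr} between Grassmann angles and conic intrinsic volumes applies, giving $1-\gamma_k(T_F(K_n^B))=1-2\sum_{i=1,3,5,\ldots}\upsilon_{k+i}(T_F(K_n^B))$. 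Summing $1$ over the $\binom{n+1}{j+1}$ faces and using the Gauss--Bonnet identity~\eqref{eq:sum_odd_Intr_vol} to rewrite $1=2\sum_{i=1,3,5,\ldots}\sum_{m}\upsilon_{\text{odd shifted}}$ — or, more cleanly, rewriting $1-2\sum_{i=1,3,\ldots}\upsilon_{k+i}=2\sum_{i=1,3,\ldots}\upsilon_{k-i}+\upsilon_k\cdot[\text{parity bookkeeping}]$ — one arrives at the expression $2\sum_{i=1,3,5,\ldots}\upsilon_{k-i}(T_F(K_n^B))$ summed over faces (exactly the manipulation used to pass from~\eqref{eq:relation_grass_intr} to~\eqref{eq:proj_lemma_alt2}, using that $\sum_{\text{odd}}\upsilon=\sum_{\text{even}}\upsilon=\tfrac12$). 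The cleanest route is probably to write $1-\gamma_k(C)=2\sum_{i=0,2,4,\ldots}\upsilon_{k-i}(C)$ when $C$ is not a subspace, which follows by combining~\eqref{eq:relation_grass_intr} with~\eqref{eq:sum_odd_Intr_vol}; the sum of even-indexed intrinsic volumes up to $k$ naturally produces the $i=0,2,4,\ldots$ pattern in the target formula.

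Finally I would apply the $B$-case of Theorem~\ref{theorem:Schlaefli_typeB_simplex}, namely $\sum_{F\in\cF_j(K_n^B)}\upsilon_{k-i}(T_F(K_n^B))=\frac{j!}{n!}\stirling{n+1}{k-i+1}\stirlingsec{k-i+1}{j+1}$, term by term in the sum over $i$. Substituting this in yields
\begin{align*}
\bP\big[\textup{Unique}\big(G,x,K_n^B\big)\big]=\frac{2}{\binom{n+1}{j+1}}\sum_{i=0,2,4,\ldots}\frac{j!}{n!}\stirling{n+1}{k-i+1}\stirlingsec{k-i+1}{j+1}=\frac{2\cdot j!}{n!\binom{n+1}{j+1}}\sum_{i=0,2,4,\ldots}\stirling{n+1}{k-i}\stirlingsec{k-i}{j+1},
\end{align*}
where in the last step I re-index the summation variable (shifting $i\mapsto i$ but noting $k-i+1$ ranges over the same set as $k-i'$ for a shifted $i'$, equivalently absorbing the $+1$ by renaming), matching the claimed formula. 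The only genuinely delicate point — the "main obstacle" — is the bookkeeping of parities and index shifts: one must be careful that $T_F(K_n^B)$ is never a linear subspace (so that~\eqref{eq:relation_grass_intr} and~\eqref{eq:sum_odd_Intr_vol} are legitimate for each individual face), and that the conversion $1-\gamma_k=2\sum_{\text{even }i\ge 0}\upsilon_{k-i}$ together with the $+1$ shift inside the Stirling numbers of Theorem~\ref{theorem:Schlaefli_typeB_simplex} combine to give precisely the sum over $i=0,2,4,\ldots$ of $\stirling{n+1}{k-i}\stirlingsec{k-i}{j+1}$ rather than an off-by-one variant. Everything else is a direct transcription of the proof of Proposition~\ref{prop:unique_weyl_B} with $f_j(B^{(n)})=\binom nj$ replaced by $f_j(K_n^B)=\binom{n+1}{j+1}$ and $2^j j!/(2^n n!)$ by $j!/n!$.
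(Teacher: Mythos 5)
Your overall route is exactly the paper's: average the equivalence~\eqref{eq:equivalence_unique} over the uniformly chosen face, identify $\ker G$ with a uniform $(n-k)$-dimensional subspace so that the probability of trivial intersection is $1-\gamma_k(T_F(K_n^B))$, convert to conic intrinsic volumes, and finish with the $B$-case of Theorem~\ref{theorem:Schlaefli_typeB_simplex} --- precisely the argument of Proposition~\ref{prop:unique_weyl_B}. However, the one step you yourself flagged as delicate is carried out incorrectly. The identity you adopt as the ``cleanest route'', $1-\gamma_k(C)=2\sum_{i=0,2,4,\ldots}\upsilon_{k-i}(C)$, is false: combining~\eqref{eq:relation_grass_intr} with~\eqref{eq:sum_odd_Intr_vol} gives $1-\gamma_k(C)=2\sum_{i=1,3,5,\ldots}\upsilon_{k-i}(C)$, i.e.\ a sum over indices of parity \emph{opposite} to $k$, exactly as in the paper's proof of Proposition~\ref{prop:unique_weyl_B}. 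For instance, for $C=B^{(2)}$ and $k=1$ one has $1-\gamma_1(C)=2\upsilon_0(C)=3/4$, whereas $2\upsilon_1(C)=1$. Consequently your final chain of equalities is broken: with the even-index conversion, the first expression $\frac{2}{\binom{n+1}{j+1}}\sum_{i=0,2,4,\ldots}\frac{j!}{n!}\stirling{n+1}{k-i+1}\stirlingsec{k-i+1}{j+1}$ is not equal to $\bP[\textup{Unique}(G,x,K_n^B)]$ (for $n=2$, $k=1$, $j=0$ it gives $1$, while the true probability is $2/3$), and the subsequent ``re-indexing'' replacing $k-i+1$ by $k-i$ while keeping $i$ even is not a re-indexing at all: shifting the summation variable by one flips the parity of the index set. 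Your two mistakes happen to cancel and reproduce the stated formula, but the written argument is invalid.

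The correct bookkeeping is: use the odd-index conversion to get $\bP[\textup{Unique}(G,x,K_n^B)]=\frac{2}{\binom{n+1}{j+1}}\sum_{i=1,3,5,\ldots}\sum_{F\in\cF_j(K_n^B)}\upsilon_{k-i}(T_F(K_n^B))$; then Theorem~\ref{theorem:Schlaefli_typeB_simplex} gives $\sum_{F\in\cF_j(K_n^B)}\upsilon_{k-i}(T_F(K_n^B))=\frac{j!}{n!}\stirling{n+1}{k-i+1}\stirlingsec{k-i+1}{j+1}$, and the substitution $i'=i-1\in\{0,2,4,\ldots\}$ absorbs the $+1$ shift in the Stirling indices, yielding exactly the claimed sum over even $i$. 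This interplay is also why the orthoscheme propositions carry $i=0,2,4,\ldots$ while the Weyl chamber ones carry $i=1,3,5,\ldots$ (there Theorem~\ref{theorem:B-chamber} has no index shift). A minor additional caveat: your assertion that $T_F(K_n^B)$ is never a linear subspace fails for $j=n$, where the tangent cone is $\R^n$; the conversion above, and hence the argument (as in the paper), really covers the faces with $j<n$.
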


In the $A$-case, recall the definition of the Schl\"afli orthoscheme of type $A$:
\begin{align*}
K_n^A=\{x\in\R^{n+1}:x_1\ge\ldots\ge x_{n+1}, x_1-x_{n+1}\le 1,x_1+\ldots+x_{n+1}=0\}.
\end{align*}
Take $0\leq j\leq n$ and  positive numbers $a_1,\ldots,a_{j+1}$ satisfying $a_1+\ldots+a_{j+1}=1$. Also, select a random and uniform subset $\{i_1,\ldots,i_{j+1}\}\subseteq \{1,\ldots,n+1\}$ such that $1\le i_1<\ldots<i_{j+1}\le n+1$. If $i_{j+1}=n+1$, we define the corresponding $j$-face of $K_n^A$ to be
\begin{multline*}
S^A(i_1,\ldots,i_{j},n+1):=\{x\in\R^{n+1}:x_1=\ldots=x_{i_1}\ge x_{i_1+1}=\ldots=x_{i_2}\ge\ldots\\\ge x_{i_{j}+1}=\ldots=x_{n+1}, x_1-x_{n+1}\le 1,x_1+\ldots+x_{n+1}=0\}
\end{multline*}
(which corresponds to the type of face described in~\eqref{eq:face_type1}), while for $i_j\le n$, we define
\begin{multline*}
S^A(i_1,\ldots,i_{j+1}):=\{x\in\R^{n+1}:x_1=\ldots=x_{i_1}\ge x_{i_1+1}=\ldots=x_{i_2}\ge\ldots\\\ge x_{i_{j+1}+1}=\ldots=x_{n+1}, x_1-x_{n+1}= 1,x_1+\ldots+x_{n+1}=0\}
\end{multline*}
(which corresponds to the type of face described in~\eqref{eq:face_type2}). Due to the one-to-one correspondence between the collections of indices $1\le i_1<\ldots< i_{j+1}\le n+1$ and the $j$-faces of $K_n^A$, the face $S^A(i_1,\ldots,i_{j+1})$ is uniformly distributed on the set $\cF_j(K_n^A)$ of all $j$-faces. We consider the random signal $x=(x_1,\ldots,x_{n+1})$ given by
\begin{align*}
x_m=\bigg(\sum_{l:\,i_l\ge m} a_l\bigg)-c,\quad m=1,\ldots,n+1,
\end{align*}
where $c$ is chosen to ensure that  $x_1+\ldots+x_{n+1} = 0$.
It can be easily checked that the signal $x$ belongs to the relative interior of $S^A(i_1,\ldots,i_j)$. Then, the $A$-case of Theorem~\ref{theorem:Schlaefli_typeB_simplex} yields the following proposition.

\begin{proposition}
Let $0  \leq   j \leq  k \le n$ and let $x\in\R^{n+1}$ be a random signal constructed as above. If $G:\R^{n+1}\to \R^k$ is a Gaussian random matrix, then it holds that
%Let $x\in\R^d$ be a random signal constructed as above, let $G:\R^{n+1}\to \R^k$ be a Gaussian random matrix. Then, for $0\le j<k\le n??$??, it holds that
\begin{align*}
\bP\big[\textup{Unique}\big(G,x,K^A_n\big)\big]=\frac{2\cdot j!}{n!\binom {n+1}{j+1}}\sum_{i=0,2,4,\ldots}\stirling {n+1}{k - i}\stirlingsec{k- i }{j+1}.
\end{align*}
\end{proposition}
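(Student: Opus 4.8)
The plan is to mimic, almost verbatim, the proof of Proposition~\ref{prop:unique_weyl_B}, replacing the Weyl chamber $B^{(n)}$ by $K_n^A$ and replacing Theorem~\ref{theorem:B-chamber} by the $A$-case of Theorem~\ref{theorem:Schlaefli_typeB_simplex}. By construction the random $j$-face $S^A(i_1,\ldots,i_{j+1})$ whose relative interior contains $x$ is uniformly distributed over $\cF_j(K_n^A)$, and since the $j$-faces of $K_n^A$ correspond bijectively to the $(j+1)$-element subsets of $\{1,\ldots,n+1\}$ we have $f_j(K_n^A)=\binom{n+1}{j+1}$. Conditioning on this face and invoking the equivalence~\eqref{eq:equivalence_unique} I would first write
\begin{align*}
\bP\big[\textup{Unique}(G,x,K_n^A)\big]=\frac{1}{\binom{n+1}{j+1}}\sum_{F\in\cF_j(K_n^A)}\bP\big[\ker G\cap T_F(K_n^A)=\{0\}\big].
\end{align*}

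Next I would observe that, $G$ being Gaussian, $\ker G$ is rotationally invariant and hence a uniformly distributed $(n+1-k)$-dimensional linear subspace of $\R^{n+1}$; by the definition~\eqref{eq:grassmann_angle_def} of the Grassmann angle this gives $\bP[\ker G\cap T_F(K_n^A)\ne\{0\}]=\gamma_k(T_F(K_n^A))$. Then I would turn $1-\gamma_k$ into conic intrinsic volumes: combining~\eqref{eq:relation_grass_intr} with~\eqref{eq:sum_odd_Intr_vol} yields, for any cone $C$ that is not a linear subspace (which holds for $T_F(K_n^A)$ whenever $F$ is a proper face, i.e.\ $j<n$), the identity $1-\gamma_k(C)=2\sum_{i=1,3,5,\ldots}\upsilon_{k-i}(C)$. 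Summing over $F$, interchanging the two finite sums, and applying the $A$-case of Theorem~\ref{theorem:Schlaefli_typeB_simplex} with $k$ replaced by $k-i$ gives
\begin{align*}
\bP\big[\textup{Unique}(G,x,K_n^A)\big]=\frac{2\,j!}{n!\binom{n+1}{j+1}}\sum_{i=1,3,5,\ldots}\stirling{n+1}{k-i+1}\stirlingsec{k-i+1}{j+1},
\end{align*}
and re-indexing $i\mapsto i-1$ (so that $1,3,5,\ldots$ becomes $0,2,4,\ldots$ and $k-i+1$ becomes $k-i$) produces exactly the asserted formula.

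Once Theorem~\ref{theorem:Schlaefli_typeB_simplex} is available the computation is routine; the one point requiring care — and the step I expect to be the main obstacle — is that, unlike $B^{(n)}$, the polytope $K_n^A$ and all of its tangent cones live in $\R^{n+1}$ while being only $n$-dimensional. One must justify that $\bP[\ker G\cap T_F(K_n^A)\ne\{0\}]$ really equals $\gamma_k(T_F(K_n^A))$: this follows from the ambient-dimension-independence of Grassmann angles, or, concretely, from the fact that $T_F(K_n^A)\subseteq L_{n+1}^\perp$ together with the observation that $\ker G\cap L_{n+1}^\perp$ is a uniformly distributed $(n-k)$-dimensional subspace of $L_{n+1}^\perp\cong\R^n$, so the intersection probability is the $k$-th Grassmann angle of the cone viewed intrinsically. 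The boundary value $j=n$ (which forces $k=n$, so $T_F(K_n^A)=L_{n+1}^\perp$ is a linear subspace and the key identity for $1-\gamma_k$ does not apply) should be treated separately.
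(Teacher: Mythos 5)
Your proposal is correct and takes essentially the same route the paper intends: it reproduces the proof of Proposition~\ref{prop:unique_weyl_B} verbatim with $K_n^A$ in place of $B^{(n)}$ and the $A$-case of Theorem~\ref{theorem:Schlaefli_typeB_simplex} in place of Theorem~\ref{theorem:B-chamber}, and the details (the count $f_j(K_n^A)=\binom{n+1}{j+1}$, the identification of $\bP[\ker G\cap T_F(K_n^A)\neq\{0\}]$ with $\gamma_k(T_F(K_n^A))$ despite the cone living in $\R^{n+1}$, the identity $1-\gamma_k=2\sum_{i=1,3,5,\ldots}\upsilon_{k-i}$, and the reindexing $i\mapsto i-1$) all check out. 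Your closing caveat is well placed: for $j=k=n$ the tangent cone is the linear subspace $L_{n+1}^\perp$, so the identity for $1-\gamma_k$ does not apply, an edge case the paper itself glosses over (also in its own proof of Proposition~\ref{prop:unique_weyl_B}).
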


\section{Proofs: Angle sums of Weyl chambers and Schl\"afli orthoschemes}\label{section:proofs_gen_fct}

In this section, we present the proofs of Propositions~\ref{prop:sum_intr_vol}, \ref{prop:sum_prod_simplices_A&B}, and Theorem~\ref{theorem:prod_typeB_simplex}.
Most of the proofs rely on  explicit expressions for the conic intrinsic volumes of the Weyl chambers.
%we need to compute the generating polynomials of the conic intrinsic volumes of tangent cones which is the product of the generating polynomials of Weyl chambers, due to~\eqref{eq:intr_vol_prod_gen_fct}.
Recall that the Weyl chambers $A^{(n)}$ and $B^{(n)}$ are defined by
\begin{align}
A^{(n)}:=\{x\in\R^n:x_1\ge x_2\ge\ldots\ge x_n\},
\;\;\;
B^{(n)}:=\{x\in\R^{n}: x_1\ge x_2\ge\ldots\ge x_n\ge 0\},
\end{align}
for $n\in\N$,
and we put $B^{(0)}:=\{0\}$. The intrinsic volumes of the Weyl chambers are known explicitly and given by
\begin{align}\label{eq_in_vol_Weyl_chamb}
\nu_k(A^{(n)})=\stirling{n}{k}\frac{1}{n!},\quad \nu_k(B^{(n)})=\frac{\stirlingb nk}{2^{n}n!}
\end{align}
for $k=0,\ldots,n$; see, e.g., \cite[Theorem 4.2]{KVZ15} or \cite[Theorem 1.1]{GK20_Intr_Vol}.  The $\stirlingb nk$'s denote the $B$-analogues of the Stirling numbers of the first kind as defined in~\eqref{eq:def_stirling1b}.

\subsection{Proof of Proposition~\ref{prop:sum_intr_vol}}\label{section:proof_prop_typeB}
Let $(j,b)\in\N_0^2\backslash\{(0,0)\}$. Recall that for $l=(l_1,\ldots,l_{j+b})$ such that $l_1,\ldots,l_j\in\N$, $l_{j+1},\ldots,l_{j+b}\in\N_0$ and $l_1+\ldots+l_{j+b}=n$, we define
\begin{align*}
T_l=A^{(l_1)}\times\ldots\times A^{(l_j)}\times B^{(l_{j+1})}\times\ldots\times B^{(l_{j+b})}.
\end{align*}
Our goal is to show that  for all $k\in\{0,\ldots,n\}$,
\begin{align}\label{eq:to_show_prop_Bsimplex}
\sum_{\substack{l_1,\ldots,l_j\in\N,l_{j+1},\ldots,l_{j+b}\in\N_0:\\l_1+\ldots+l_{j+b}=n}}\upsilon_k(T_l)=\frac{j!}{n!}\stirling{n+b/2}{k+b/2}_{b/2}\stirlingsec{k+b/2}{j+b/2}_{b/2}.
\end{align}

\begin{proof}[Proof of Proposition~\ref{prop:sum_intr_vol}]
Let $(j,b)\in\N_0^2\backslash\{(0,0)\}$ and $k\in\{0,\ldots,n\}$ be given. By the product formula for conic intrinsic volumes~\eqref{eq:intr_vol_prod_gen_fct},
%(see e.g.~\cite[(5.4)]{amelunxen_edge}), we obtain
%\begin{align*}
%\nu_k(T_J)\quad =\sum_{i_1+\ldots+i_{a+b}=k}\upsilon_{i_1}(A^{(l_1)})\cdot\ldots\cdot\upsilon_{i_a}(A^{(l_a)})
%\upsilon_{i_{a+1}}(B^{(l_{a+1})})\cdot\ldots\cdot\upsilon_{i_{a+b}}(B^{(l_{a+b})})
%\end{align*}
%Since $\nu_k(C)$ forms a probability distribution on $\{0,1,\ldots,\dim C\}$ for a cone $C$ (see e.g.~\citep[(5.1)]{amelunxen_edge}), the above formula is indeed a convolution formula.
the generating polynomial of the intrinsic volumes of $T_l$ can be written as
\begin{align*}
P_{T_l}(t):=\sum_{k=0}^n\upsilon_k(T_{l})t^k
&=\left(\sum_{m=0}^{l_1}\upsilon_m(A^{(l_1)})t^m\right)\cdot\ldots\cdot\left(\sum_{m=0}^{l_j}\upsilon_m(A^{(l_j)})t^m\right)\\ &\times\left(\sum_{m=0}^{l_{j+1}}\upsilon_m(B^{(l_{j+1})})t^m\right)\cdot\ldots\cdot\left(\sum_{m=0}^{l_{j+b}}\upsilon_m(B^{(l_{j+b})})t^m\right).
\end{align*}
We can consider each sum on the right-hand side separately. Using the representations of the Stirling numbers of the first kind and their $B$-analogues from~\eqref{eq:def_stirling1_polynomial} and~\eqref{eq:def_stirling1b}, as well as the intrinsic volumes of the Weyl chambers stated in~\eqref{eq_in_vol_Weyl_chamb}, we obtain
\begin{align}\label{eq:identity_A_intrinsic_volumes}
\sum_{m=0}^{i}\upsilon_m(A^{(i)})t^m=\frac{1}{i!}\sum_{m=0}^{i}\stirling{i}{n}t^m=\frac{1}{i!}t(t+1)\cdot\ldots\cdot(t+i-1),
\qquad i\in\N,
\end{align}
and
\begin{align*}
\sum_{m=0}^{i}\upsilon_m(B^{(i)})t^m
&	=\frac{1}{2^{i}i!}\sum_{m=0}^{i}\stirlingb im t^m=\frac{1}{2^{i}i!}(t+1)(t+3)\cdot\ldots\cdot(t+2i-1),
\qquad i\in\N_0.
\end{align*}
Note that for $i=0$, we put $(t+1)(t+3)\cdot\ldots\cdot(t+2i-1):=1$ by convention, which is consistent with $\upsilon_0(\{0\})=1$. This yields the following formula
\begin{align*}
P_{T_l}(t)
&	=\frac{(t+1)(t+3)\cdot\ldots\cdot(t+2l_{j+1}-1)}{2^{l_{j+1}}l_{j+1}!}\cdot\ldots\cdot\frac{(t+1)(t+3)\cdot\ldots\cdot(t+2l_{j+b}-1)}{2^{l_{j+b}}l_{j+b}!}\\
&	\quad\times\frac{t^{\overline{l_1}}}{l_1!}\cdot\ldots\cdot\frac{t^{\overline{l_j}}}{l_j!},
\end{align*}
where $t^{\overline{r}}:=t(t+1)\cdot\ldots\cdot (t+r-1)$, $r\in\N$,  denotes   the rising factorial. Thus, the $k$-th conic intrinsic volume of $T_l$ is the coefficient of $t^k$ in the above polynomial $P_{T_l}(t)$. Note that this already implies $\nu_k(T_l)=0$ for $k<j$. Thus, the left-hand side of~\eqref{eq:to_show_prop_Bsimplex} is $0$, which coincides with the right-hand side, since for $k<j$ we have
$$
\stirlingsec{k+b/2}{j+b/2}_{b/2}=0.
$$

Therefore, we only need to consider the case $k\ge j$. Let $P^{(n)}_{l_1,\ldots,l_{j+b}}(m)$, where $m=0,\ldots,n$, be the coefficients of the polynomial
\begin{multline*}
t^{\overline{l_1}}\cdot\ldots\cdot t^{\overline{l_{j}}}(t+1)(t+3)\cdot\ldots\cdot(t+2l_{j+1}-1)(t+1)(t+3)\cdot\ldots\cdot(t+2l_{j+b}-1)=\sum_{m=j}^n P^{(n)}_{l_1,\ldots,l_{j+b}}(m)t^m.
\end{multline*}
Using the notation just introduced,  we obtain
\begin{align*}
&\sum_{\substack{l_1,\ldots,l_j\in\N,l_{j+1},\ldots,l_{j+b}\in\N_0:\\l_1+\ldots+l_{j+b}=n}}\upsilon_k(T_l)	\quad=\sum_{\substack{l_1,\ldots,l_j\in\N,l_{j+1},\ldots,l_{j+b}\in\N_0:\\l_1+\ldots+l_{j+b}=n}}\frac{P^{(n)}_{l_1,\ldots,l_{j+b}}(k)}{l_1!\cdot\ldots\cdot l_{j+b}!2^{l_{j+1}+\ldots+l_{j+b}}}.
\end{align*}

Now, let $[t^N]f(t):=\frac{1}{N!}f^{(N)}(0)$ be the coefficient of $t^N$ in the Taylor expansion of a function $f$ around $0$. Define
\begin{multline*}
C_{n,j,b}(k):=[t^k]\sum_{\substack{l_1,\ldots,l_j\in\N,l_{j+1},\ldots,l_{j+b}\in\N_0:\\l_1+\ldots+l_{j+b}=n}}\Bigg(\frac{t^{\overline{l_1}}}{l_1!}\cdot\ldots\cdot\frac{t^{\overline{l_j}}}{l_j!}\\
	\times\frac{(t+1)(t+3)\cdot\ldots\cdot
(t+2l_{j+1}-1)}{2^{l_{j+1}}l_{j+1}!}\cdot\ldots\cdot\frac{(t+1)(t+3)\cdot\ldots\cdot(t+2l_{j+b}-1)}{2^{l_{j+b}}l_{j+b}!}\Bigg).
\end{multline*}
Then, we can observe that
\begin{align*}
&C_{n,j,b}(k)\\
&	=\sum_{\substack{l_1,\ldots,l_j\in\N,l_{j+1},\ldots,l_{j+b}\in\N_0:\\l_1+\ldots+l_{j+b}=n}}\frac{1}{l_1!\cdot\ldots\cdot l_{j+b}!2^{l_{j+1}+\ldots+l_{j+b}}}[t^k]\Big((t+1)(t+3)\cdot\ldots\cdot(t+2l_{j+1}-1)\\
&	\quad\times\ldots\cdot(t+1)(t+3)\cdot\ldots\cdot(t+2l_{j+b}-1)t^{\overline{l_1}}\cdot\ldots\cdot t^{\overline{l_j}}\Big)\\
%&	=\sum_{\substack{l_1,\ldots,l_j\in\N,l_{j+1},\ldots,l_{j+b}\in\N_0:\\l_1+\ldots+l_{j+b}=n}}\frac{1}{l_1!\cdot\ldots\cdot l_{j+b}!2^{l_{j+1}+\ldots+l_{j+b}}}[t^k]\sum_{m=0}^n P^{(n)}_{l_1,\ldots,l_{j+b}}(m)t^m\\
&	=\sum_{\substack{l_1,\ldots,l_j\in\N,l_{j+1},\ldots,l_{j+b}\in\N_0:\\l_1+\ldots+l_{j+b}=n}}\frac{P^{(n)}_{l_1,\ldots,l_{j+b}}(k)}{l_1!\cdot\ldots\cdot l_{j+b}!2^{l_{j+1}+\ldots+l_{j+b}}}\\
&	=\sum_{\substack{l_1,\ldots,l_j\in\N,l_{j+1},\ldots,l_{j+b}\in\N_0:\\l_1+\ldots+l_{j+b}=n}}\upsilon_k(T_l).
\end{align*}
Thus, to prove the proposition, it suffices to show that for all $(j,b)\in\N_0^2\backslash\{(0,0)\}$ and $k\in\{j,\ldots,n\}$ we have
\begin{align}\label{eq_to_prove}
C_{n,j,b}(k)=\frac{j!}{n!}\stirling{n+b/2}{k+b/2}_{b/2}\stirlingsec{k+b/2}{j+b/2}_{b/2}.
\end{align}
To this end,  we can introduce a new variable $x$ and write, by expanding the product,
\begin{align*}
C_{n,j,b}(k)=[t^k][x^n]\Bigg(\bigg(\sum_{l=0}^\infty\frac{(t+1)(t+3)\cdot\ldots\cdot(t+2l-1)}{2^ll!}x^l\bigg)^b\times\bigg(\sum_{l=1}^\infty\frac{t^{\overline{l}}}{l!}x^l\bigg)^j\Bigg).
\end{align*}
Using~\eqref{eq:def_stirling1_polynomial} and the exponential generating function in two variables for the Stirling numbers of the first kind stated in~\eqref{eq:gen_fct_stirling1}, we obtain
\begin{align}\label{eq:sum/l!}
\sum_{l=1}^\infty \frac{t^{\overline{l}}}{l!}x^l=-1+\sum_{l=0}^\infty\sum_{m=0}^l\stirling{l}{m}t^m\frac{x^l}{l!}=(1-x)^{-t}-1.
\end{align}
From~\eqref{eq:def_stirling1b} and~\eqref{eq:gen_fct_stirling1b}, we similarly get
\begin{align}\label{eq:sum_B}
\sum_{l=0}^\infty\frac{(t+1)(t+3)\cdot\ldots\cdot(t+2l-1)}{2^ll!}x^l
&	=\sum_{l=0}^\infty\sum_{m=0}^l\stirlingb lm t^m\Big(\frac{x}{2}\Big)^l\frac{1}{l!}=(1-x)^{-\frac{1}{2}(t+1)}.
\end{align}
Thus, we have
\begin{align*}
C_{n,j,b}(k)
	=[t^k][x^n]\Big((1-x)^{-\frac{b}{2}(t+1)}\big((1-x)^{-t}-1\big)^j\Big)
	=[x^n][t^k]\big(e^{-\frac{cb}{2}(t+1)}(e^{-ct}-1)^j\big),
\end{align*}
where we set $c=c(x)=\log(1-x)$. The exponential generating function of the $b/2$-Stirling numbers stated in~\eqref{eq:gen_fct_r-stirling2} yields
\begin{align*}
e^{-\frac{cb}{2}t}(e^{-ct}-1)^j=\sum_{m=j}^\infty\stirlingsec{m+b/2}{j+b/2}_{b/2}\frac{j!}{m!}(-ct)^m.
\end{align*}
It follows that
\begin{align*}
[t^k]\big(e^{-\frac{cb}{2}(t+1)}(e^{-ct}-1)^j\big)
	=e^{-\frac{cb}{2}}[t^k]\bigg(\sum_{m=j}^\infty\stirlingsec{m+b/2}{j+b/2}_{b/2}\frac{j!}{m!}(-ct)^m\bigg) =e^{-\frac{cb}{2}}(-c)^k\frac{j!}{k!}\stirlingsec{k+b/2}{j+b/2}_{b/2}.
\end{align*}
Furthermore, using~\eqref{eq:gen_fct_r-stirling1} we obtain
\begin{align*}
[x^n]\big(e^{-\frac{cb}{2}}(-c)^k\big)
&	=[x^n]\frac{\big(-\log(1-x)\big)^k}{(1-x)^{\frac{b}{2}}}
	=[x^n]\bigg(\sum_{m=k}^\infty \stirling{m+b/2}{k+b/2}_{b/2}\frac{k!}{m!}x^m\bigg)
	=\frac{k!}{n!}\stirling{n+b/2}{k+b/2}_{b/2}.
\end{align*}
Taking all this into consideration, we obtain
\begin{align*}
C_{n,j,b}(k)
&	=[x^n][t^k]\big(e^{-\frac{cb}{2}(t+1)}(e^{-ct}-1)^j\big)\\
&	=[x^n]\bigg(e^{-\frac{cb}{2}}(-c)^k\frac{j!}{k!}\stirlingsec{k+b/2}{j+b/2}_{b/2}\bigg)\\
&	=\frac{j!}{k!}\stirlingsec{k+b/2}{j+b/2}_{b/2}[x^n]\big(e^{-\frac{cb}{2}}(-c)^k\big)\\
&	=\frac{j!}{n!}\stirling{n+b/2}{k+b/2}_{b/2}\stirlingsec{k+b/2}{j+b/2}_{b/2},
\end{align*}
which coincides with (\ref{eq_to_prove}) and therefore completes the proof.
\end{proof}

\subsection{Alternative proof of Proposition~\ref{prop:sum_intr_vol}}\label{section:proofs_intext_angles}

In this section, we present a more direct way to prove Proposition~\ref{prop:sum_intr_vol} by computing the internal and external angles of the faces of the Weyl chambers. A similar approach was already used in~\cite{GV01} to compute the classical intrinsic volumes of the Schl\"afli orthoscheme of type $B$ and in~\cite{GK20_Intr_Vol} to compute the conic intrinsic volumes of the Weyl chambers.

Let $(j,b)\in\N_0^2\backslash\{(0,0)\}$. Recall that for $l=(l_1,\dots,l_{j+b})$ with $l_1,\dots,l_j\in\N$, $l_{j+1},\dots,l_{j+b}\in\N_0$ and $l_1+\ldots+l_{j+b}=n$, we have
\begin{align*}
T_l=A^{(l_1)}\times\ldots\times A^{(l_j)}\times B^{(l_{j+1})}\times\ldots\times B^{(l_{j+b})}.
\end{align*}

\begin{proof}[Alternative proof of Proposition~\ref{prop:sum_intr_vol}]
Let %$(j,b)\in\N_0^2\backslash\{(0,0)\}$ and
$k\in\{0,\dots,n\}$ be given. Using~\eqref{eq:in_vol_internalexternal}, we obtain
\begin{align*}
\sum_{\substack{l_1,\dots,l_j\in\N,l_{j+1},\dots,l_{j+b}\in\N_0:\\l_1+\ldots+l_{j+b}=n}}\upsilon_k(T_l)
=\sum_{\substack{l_1,\dots,l_j\in\N,l_{j+1},\dots,l_{j+b}\in\N_0:\\l_1+\ldots+l_{j+b}=n}}\sum_{F\in\cF_k(T_l)}\alpha(F)\alpha(N_F(T_l)).
\end{align*}
In order to compute this sum, we consider the $k$-faces of $T_l$. Since $T_l$ is the product of Weyl chambers of type $A$ and type $B$, its $k$-faces are products of faces of the individual Weyl chambers. The $r$-faces of the Weyl chamber  $A^{(d)}$ can be represented by
\begin{align*}
A^{(d)}(i_1,\dots,i_{r-1}):=\{x\in\R^{d}:x_1=\ldots=x_{i_1}\ge\ldots\ge x_{i_{r-1}}=\ldots=x_{d}\}\in\cF_r(A^{(d)})
\end{align*}
for $d\in\N$, $1\le r\le d$ and $1\le i_1<\ldots<i_{r-1}\le d-1$. Note that for $r=1$, we obtain the $1$-face $\{x\in\R^d:x_1=\ldots=x_d\}$. Thus, $T_l$ has no $k$-faces for $k<j$ since the faces of $A^{(l_1)},\ldots, A^{(l_j)}$ have at least dimension $1$. Therefore, for $k<j$ the left-hand side of~\eqref{eq:to_show_prop_Bsimplex} is $0$ coinciding with the right-hand side. From now on assume $k\ge j$. The $s$-faces of the Weyl chamber  $B^{(d)}$ are given by
\begin{multline*}
B^{(d)}(m_1,\dots,m_{s}):=\{x\in\R^d:x_1=\ldots=x_{m_1}\ge \ldots\ge x_{m_{s-1}+1}=\ldots=x_{m_s}\ge\\ x_{m_s+1}=\ldots=x_d=0\}\in\cF_s(B^{(d)})
\end{multline*}
for $d\in\N$, $0\le s\le d$ and $1\le m_1<\ldots<m_s\le d$. Note that for $m_s=d$, no $x_i$'s are required to be $0$ and for $s=0$, we obtain the $0$-dimensional face $\{0\}$.

Thus, for each $k$-face $F$ of $T_l$, there is a collection of indices $r_1,\dots,r_j\in \N$, $s_1,\dots,s_b\in \N_0$ satisfying $r_1+\ldots+r_j+s_1+\ldots+s_b=k$, such that the face can be written as
\begin{multline}\label{eq:faces_TJ}
F_{l,i,m}:=A^{(l_1)}(i_1^{(1)},\dots,i^{(1)}_{r_1-1})\times\ldots\times A^{(l_j)}(i_1^{(j)},\dots,i^{(j)}_{r_j-1}) \\\times B^{(l_{j+1})}(m_1^{(1)},\dots,m_{s_1}^{(1)})\times\ldots\times B^{(l_{j+b})}(m_1^{(b)},\dots,m_{s_b}^{(b)})
\end{multline}
for the vectors $i=(i_1^{(1)},\dots,i^{(1)}_{r_1-1},\dots,i_1^{(j)},\dots,i^{(j)}_{r_j-1})$ and $m=(m_1^{(1)},\dots,m^{(1)}_{s_1},\dots,m_1^{(b)},\dots,m^{(b)}_{s_b})$ satisfying the conditions
\begin{align}\label{eq:cond_l1}
&1\le i_1^{(1)}<\ldots<i^{(1)}_{r_1-1}\le l_1-1,\dots,1\le i_1^{(j)}<\ldots<i^{(j)}_{r_j-1}\le l_j-1,\\
&1\le m_1^{(1)}<\ldots<m^{(1)}_{s_1}\le l_{j+1},\dots,1\le m_1^{(b)}<\ldots<m^{(b)}_{s_b}\le l_{j+b}\notag.
\end{align}
For every $r_1,\dots,r_j,s_1,\dots,s_b$ as above and each $(i,m)$ satisfying conditions~\eqref{eq:cond_l1}, the cone $F_{l,i,m}$ defines a $k$-face of $T_J$.

\subsubsection*{Internal angles} Now, we want to compute the solid angle of $F_{l,i,m}$. Since $F_{l,i,m}$ is given as a product of faces, the angle $\alpha(F_{l,i,m})$ is also given as a product of angles. Thus, it is enough to evaluate the angles of $A^{(d)}(i_1,\dots,i_{r-1})$ and $B^{(d)}(m_1,\dots,m_{s})$. Consider the linear hulls of $A^{(d)}(i_1,\dots,i_{r-1})$ and $B^{(d)}(m_1,\dots,m_{s})$ given by
\begin{align*}
\lin A^{(d)}(i_1,\dots,i_{r-1})=\{x\in\R^{i}:x_1=\ldots=x_{i_1},\ldots, x_{i_{r-1}+1}=\ldots=x_{d}\}
\end{align*}
and
\begin{align*}
\lin B^{(d)}(m_1,\dots,m_{s})=\{x\in\R^d:x_1=\ldots=x_{m_1}, \ldots,x_{m_{s-1}+1}=\ldots=x_{m_s}, x_{m_s+1}=\ldots=x_d=0\}.
\end{align*}
Thus, the vectors $y_1,\dots,y_r$ given by
\begin{multline*}
y_1=\frac{1}{\sqrt{i_1}}(\overbrace{1,\dots,1}^{1,\dots,i_1},0,\dots,0),
y_2=\frac{1}{\sqrt{i_2-i_1}}(0,\dots,0,\overbrace{1,\dots,1}^{i_1+1,\dots,i_2},0,\dots,0),\\\dots, y_{r}=\frac{1}{\sqrt{d-i_{r-1}}}(0,\dots,0,\overbrace{1,\dots,1}^{i_{r-1}+1,\dots,d})
\end{multline*}
form an orthonormal basis of $\lin A^{(d)}(i_1,\dots,i_{r-1})$. Similarly, the vectors $z_1,\dots,z_s$ form an orthonormal basis of $\lin B^{(d)}(m_1,\dots,m_{s})$, where
\begin{multline*}
z_1=\frac{1}{\sqrt{m_1}}(\overbrace{1,\dots,1}^{1,\dots,m_1},0,\dots,0),z_2=\frac{1}{\sqrt{m_2-m_1}}(0,\dots,0,\overbrace{1,\dots,1}^{m_1+1,\dots,m_2},0,\dots,0),\\\dots, z_s=\frac{1}{\sqrt{m_{s}-m_{s-1}}}(0,\dots,0,\overbrace{1,\dots,1}^{m_{s-1}+1,\dots,m_s},0,\dots,0).
\end{multline*}
Now let $\xi_1,\xi_2,\dots$ be independent and standard normal distributed random variables. Then, the random vector $N_1:=\xi_1y_1+\ldots+\xi_ry_r$ is $r$-dimensional standard normal distributed on the linear subspace $\lin A^{(d)}(i_1,\dots,i_{r-1})$ and the random vector $N_2:=\xi_1z_1+\ldots+\xi_sz_s$ is $s$-dimensional standard normal distributed on the linear subspace $\lin B^{(d)}(m_1,\dots,m_{s})$. Thus, by definition, we obtain
\begin{align*}
\alpha\big(A^{(d)}(i_1,\dots,i_{r-1})\big)
&	=\P(N_1\in A^{(d)}(i_1,\dots,i_{r-1}))\\
&	=\P\Big(\frac{\xi_1}{\sqrt{i_1}}\ge\frac{\xi_2}{\sqrt{i_2-i_1}}\ge\ldots\ge \frac{\xi_r}{\sqrt{d-i_{r-1}}}\Big)
\end{align*}
and similarly
\begin{align*}
\alpha\big(B^{(d)}(m_1,\dots,m_{s})\big)
&	=\P(N_2\in B^{(d)}(m_1,\dots,m_{s}))
\\
&	=\P\Big(\frac{\xi_1}{\sqrt{m_1}}\ge\frac{\xi_2}{\sqrt{m_2-m_1}}\ge\ldots\ge \frac{\xi_{s}}{\sqrt{m_{s}-m_{s-1}}}\ge 0\Big).
\end{align*}
We will not evaluate these probabilities explicitly.
In the course of this proof,  we will divide them into groups
so that each group of these probabilities adds up to $1$.
In summary, the solid angle of $F_{l,i,m}$ is given by
\begin{multline}\label{eq:int_angles}
\alpha(F_{l,i,m})=\alpha\big(A^{(l_1)}(i_1^{(1)},\dots,i^{(1)}_{r_1-1})\big)\cdot\ldots\cdot\alpha\big( A^{(l_j)}(i_1^{(j)},\dots,i^{(j)}_{r_j-1})\big) \\\times \alpha\big(B^{(l_{j+1})}(m_1^{(1)},\dots,m_{s_1}^{(1)})\big)\cdot\ldots\cdot \alpha\big(B^{(l_{j+b})}(m_1^{(b)},\dots,m_{s_b}^{(b)})\big).
\end{multline}

\subsubsection*{External angles} Now, we need to evaluate the external angle of $F_{l,i,m}$, i.e.~$\alpha(N_{F_{l,i,m}}(T_l))$. In order to do this, we consider the normal cone $N_{F_{l,i,m}}(T_l)=(\lin F_{l,i,m})^\perp\cap (T_l)^\circ$. We have
\begin{align*}
(T_l)^\circ=\big(A^{(l_1)}\big)^\circ\times\ldots\times \big(A^{(l_j)} \big)^\circ\times \big(B^{(l_{j+1})}\big)^\circ\times\ldots\times \big(B^{(l_{j+b})}\big)^\circ,
\end{align*}
where for example
\begin{align*}
\big(A^{(l_1)}\big)^\circ
&	=\{x\in\R^{l_1}:x_1\ge\ldots\ge x_{l_1}\}^\circ\\
&	=\pos\{e_2-e_1,e_3-e_2,\dots,e_{l_1}-e_{l_1-1}\}\\
&	=\{x\in\R^{l_1}: x_1\le 0,x_1+x_2\le 0,\dots,x_1+\ldots+x_{l_1-1}\le 0, x_1+\ldots+x_{l_1}=0\}.
\end{align*}
using the duality relation~\eqref{eq:duality_relation} in the second line. Here, $e_1,\dots,e_{l_1}$ denotes the standard Euclidean orthonormal basis in the ambient space $\R^{l_1}$. Thus, the partial sums of the coordinates of the points from the dual cones of Weyl chambers of type $A$ form ``bridges'' staying below zero. Similarly, we obtain
\begin{align*}
\big(B^{(l_{j+1})}\big)^\circ
&	=\pos\{e_2-e_1,e_3-e_2,\dots,e_{l_{j+1}}-e_{l_{j+1}-1},-e_{l_{j+1}}\}\\
&	=\{x\in\R^{l_{j+1}}:x_1\le 0,x_1+x_2\le 0,\dots,x_1+\ldots+x_{l_{j+1}}\le 0\},
\end{align*}
which we will also refer to as a ``walk'' staying below zero. The dual cones of the other Weyl chambers are obtained in the same way.

Recalling the definition of $F_{l,i,m}$ in~\eqref{eq:faces_TJ}, its linear hull is given as the product of linear hulls of the faces $A^{(l_1)}(i_1^{(1)},\dots,i^{(1)}_{r_1-1}),\dots, B^{(l_{j+b})}(m_1^{(b)},\dots,m_{s_b}^{(b)})$ which we already evaluated. Thus, $(\lin F_{l,i,m})^\perp$ is the product of the orthogonal complements of the linear hulls. For the $r_1$-face $A^{(l_1)}(i_1^{(1)},\dots,i^{(1)}_{r_1-1})$, we have
\begin{align*}
\big(\lin A^{(l_1)}(i_1^{(1)},\dots,l^{(1)}_{i_1-1})\big)^\perp
&	=\{x\in\R^{l_1}:x_1=\ldots=x_{i^{(1)}_1},\ldots, x_{i^{(1)}_{r_1-1}}=\ldots=x_{l_1}\}^\perp\\
&	=\{x\in\R^{l_1}:x_1+\ldots+x_{i^{(1)}_1}=0,\dots,x_{i^{(1)}_{r_1-1}}+\ldots+x_{l_1}=0\}.
\end{align*}
Similarly, for $B^{(l_{j+1})}(m_1^{(1)},\dots,m^{(1)}_{s_1})$, we get
\begin{align*}
\big(\lin B^{(l_{j+1})}(m_1^{(1)},\dots,m^{(1)}_{s_1})\big)^\perp
&	=\{x\in\R^{l_{j+1}}:x_1+\ldots+x_{m^{(1)}_1}=0,\ldots,x_{m^{(1)}_{s_1-1}+1}+\ldots+x_{m^{(1)}_{s_1}}=0\}.
\end{align*}
Thus, $(\lin F_{l,i,m})^\perp$ consists of the points $x= (x^{(1)},\ldots,x^{(j+b)}) \in\R^n$ with
$x^{(1)}\in \R^{l_1}, \ldots x^{(j+b)}\in \R^{l_{j+b}}$ whose components $x^{(q)}= (x^{(q)}_1,\ldots, x^{(q)}_{l_q})$ are such that the following conditions are satisfied:
$$
x_1^{(q)}+\ldots+x_{i^{(q)}_1}^{(q)}=0,\dots,x_{i^{(q)}_{r_q-1}}^{(q)}+\ldots+x_{l_q}^{(q)}=0
$$
for all $q\in \{1,\ldots,j\}$ and
$$
x_1^{(j+d)}+\ldots+x_{m^{(d)}_1}^{(j+d)}=0,\ldots,x_{m^{(d)}_{s_{d}-1}+1}^{(j+d)}+\ldots+x_{m^{(d)}_{s_{d}}}^{(j+d)}=0
$$
for all $d\in \{1,\ldots,b\}$. In other words, each $A$-face corresponds to a group of $l_q$ coordinates  which is divided into smaller groups whose sums are required to be $0$.   Similarly, each $B$-face corresponds to a group of $l_{j+d}$ coordinates which is divided into smaller subgroups whose sums are required to be $0$, except for the last subgroup.
%for all $q\in \{j+1,\ldots,j+b\}$. {\color{red}????}

Taking both the description of the defining conditions of $(T_l)^\circ$ and $(\lin F_{l,i,m})^\perp$ into consideration, we obtain for the normal cone $N_{F_{l,i,m}}(T_l)=(T_l)^\circ\cap (\lin F_{l,i,m})^\perp$ the following conditions: $N_{F_{l,i,m}}(T_l)$ consists of the points $x= (x^{(1)},\ldots,x^{(j+b)}) \in\R^n$ with
$x^{(1)}\in \R^{l_1}, \ldots x^{(j+b)}\in \R^{l_{j+b}}$, such that  the partial sums of the groups
\begin{align*}
(x_1^{(q)},\ldots,x_{i^{(q)}_1}^{(q)}),\dots,(x_{i^{(q)}_{r_q-1}}^{(q)},\ldots,x_{l_q}^{(q)})
\end{align*}
for all $q\in\{1,\dots,j\}$ and
\begin{align*}
(x_1^{(j+d)},\ldots,x_{m^{(d)}_1}^{(j+d)}),\ldots,(x_{m^{(d)}_{s_{d}-1}+1}^{(j+d)},\ldots,x_{m^{(d)}_{s_{d}}}^{(j+d)}),
\end{align*}
for all $d\in \{1,\ldots,b\}$, form  bridges staying non-positive, while  the partial sums of the groups
\begin{align*}
(x_{m^{(1)}_{s_1}}^{(j+1)},\dots,x_{l_{j+1}}^{(j+1)}),\dots,(x_{m^{(b)}_b}^{(j+b)},\dots,x_{l_{j+b}}^{(j+b)}).
\end{align*}
form walks staying non-positive. Note that each $A$-face corresponds to a collection of bridges, while each $B$-face corresponds to a collection of bridges and one walk.

%$x_1+\ldots+x_j=0$ for $j\in J$, where
%\begin{multline*}
%J:=\{i_1^{(1)},\dots,i_{r_1-1}^{(1)},l_1,l_1+i_1^{(2)},\dots,l_1+i^{(2)}_{r_2-1},l_2,\dots,l_{j-1}+i_1^{(j)},
%\dots,l_{j-1}+i^{(j)}_{r_j-1},\\l_j,l_j+m_1^{(1)},\dots,l_j+m_{s_1}^{(1)}, l_{j+1},
%l_{j+1}+m_1^{(2)},\dots,l_{j+b-1}+m_{s_b}^{(b)},\}.
%\end{multline*}
%
%In summary, $N(F_{l,i,m},T_l)=(\lin F_{l,m})^\perp\cap (T_l)^\circ$ consists of the points $x\in\R^n$ whose coordinates satisfy the following conditions. Firstly, for each two consecutive  indices $t'<t''$ in the
%$$
%J^*
%:=
%\{i_1^{(1)},\dots,i_{r_1-1}^{(1)},l_1,l_1+i_1^{(2)},\dots,l_1+i^{(2)}_{r_2-1},l_2,\dots,l_{j-1}+i_1^{(j)},
%\dots,l_{j-1}+i^{(j)}_{r_j-1},\\l_j,l_j+m_1^{(1)},\dots,l_j+m_{s_1}^{(1)},l_{j+1}+m_1^{(2)},\dots,l_{j+b-1}+m_{s_b}^{(b)}\}.
%$$
%
% $x_1+\ldots+x_t=0$ for all $t\in J$ and $x_1+\ldots+x_t\le 0$ for all $t\notin J$. Thus, $N(F_{l,i,m},T_l)$ is the product of a number of "bridges" staying nonpositive and a number of "walks" staying nonpositive.
The angles of the cones defined by such bridge or walk conditions can be interpreted as the probabilities that the corresponding bridges or walks stay non-positive. These probabilities are known explicitly thanks to the works of Sparre Andersen~\cite{Andersen1949}, \cite{sparre_andersen1}; see also~\cite{GV01}, \cite{KVZ15}, \cite[Section 3.1]{GK20_Intr_Vol} for works relating these formulas to convex cones.  The angle of a cone defined by a bridge condition is given by
$$
\alpha(\{x\in\R^i:x_1\le 0,\dots,x_1+\ldots+x_{i-1}\le 0,x_1+\ldots+x_i=0\}) = 1/i.
$$
Furthermore, the angle of a cone  $W:=\{x\in\R^i:x_1\le 0,x_1+x_2\le 0,\dots,x_1+\ldots+ x_i\le 0\}$ defined by a walk condition is given by
\begin{align*}
\alpha(W)
&	=\P(N\in W)\\
&	=\P(N_1\le 0,N_1+N_2\le 0,\dots,N_1+\ldots+N_i\le 0)\\
&	=\binom{2i}{i}\frac{1}{2^{2i}}
\end{align*}
by a formula due to Sparre Andersen~\cite{Andersen1949}, where $N=(N_1,\dots,N_i)$ is an $i$-dimensional standard normal distributed vector. Taking all into consideration, we have
\begin{align}\label{eq:ext_angles}
&\alpha(N_{F_{l,i,m}}(T_l))\nonumber\\
&	=\frac{1}{i_1^{(1)}(i_2^{(1)}-i_1^{(1)})\cdot\ldots\cdot(l_1-l_{r_1-1}^{(1)})\cdots i_1^{(j)}(i_2^{(j)}-i_1^{(j)})\cdot\ldots\cdot(l_j-i_{r_j-1}^{(j)})}\nonumber\\
&	\quad\times\frac{1}{m_1^{(1)}(m_2^{(1)}-m_1^{(1)})\cdot\ldots\cdot(m^{(1)}_{s_1}-m_{s_1-1}^{(1)})\cdots m_1^{(b)}(m_2^{(b)}-m_1^{(b)})\cdot\ldots\cdot(m_{s_b}^{(b)}-m_{s_b-1}^{(b)})}\\
&	\quad\times\frac{\dbinom{2(l_{j+1}-m_{s_1}^{(1)})}{l_{j+1}-m_{s_1}^{(1)}}}{2^{2(l_{j+1}-m_{s_1}^{(1)})}}\cdot\ldots\cdot%\frac{\dbinom{2(i_{a+b-1}-i_{a+b-2}-m_{d_{b-1}}^{(b-1)}}{i_{a+b-1}-i_{a+b-2}-m_{d_{b-1}}^{(b-1)}}}{2^{2(i_{a+b-1}-i_{a+b-2}-m_{d_{b-1}}^{(b-1)}}}
\frac{\dbinom{2(l_{j+b}-m_{s_b}^{(b)})}{l_{j+b}-m_{s_b}^{(b)}}}{2^{2(l_{j+b}-m_{s_b}^{(b)})}}\nonumber.
\end{align}

Now, we can compute the sum of the $k$-th conic intrinsic volumes of $T_l$ over all $l$:
\begin{align*}
&\sum_{\substack{l_1,\dots,l_j\in\N,l_{j+1},\dots,l_{j+b}\in\N_0:\\l_1+\ldots+l_{j+b}=n}}\upsilon_k(T_l)\\
&	=\sum_{\substack{l_1,\dots,l_j\in\N,l_{j+1},\dots,l_{j+b}\in\N_0:\\l_1+\ldots+l_{j+b}=n}}\sum_{F\in\cF_k(T_l)}\alpha(F)\alpha(N_F(T_l))\\
&	=\sum_{\substack{l_1,\dots,l_j\in\N,l_{j+1},\dots,l_{j+b}\in\N_0:\\l_1+\ldots+l_{j+b}=n}}\:\sum_{\substack{r_1,\dots,r_j\in\N,s_1,\dots,s_b\in\N_0:\\r_1+\ldots+r_j+s_1+\ldots+s_b=k}}\:\sum_{(i,m)\text{ satisfying }\eqref{eq:cond_l1}}\alpha(F_{l,i,m})\alpha(N_{F_{l,i,m}}(T_l))\\
&	=\sum_{\substack{r_1,\dots,r_j\in\N,s_1,\dots,s_b\in\N_0:\\r_1+\ldots+r_j+s_1+\ldots+s_b=k}}\:\sum_{\substack{l_1,\dots,l_j\in\N,l_{j+1},\dots,l_{j+b}\in\N_0:\\l_1+\ldots+l_{j+b}=n}}\:\sum_{(i,m)\text{ satisfying }\eqref{eq:cond_l1}}\alpha(F_{l,i,m})\alpha(N_{F_{l,i,m}}(T_l)).
\end{align*}
Let us introduce the notation for the lengths of the bridges and walks. For the lenghts of the bridges corresponding to $A$-faces, we write
\begin{multline*}
h_1=i_1^{(1)},h_2=i_2^{(1)}-i_1^{(1)},\dots,h_{r_1}=l_1-i_{r_1-1}^{(1)},\dots,\\h_{r_1+\ldots+r_{j-1}+1}=i_1^{(j)},
h_{r_1+\ldots+r_{j-1}+2}=i_2^{(j)}-i_1^{(j)},\dots,h_{r_1+\ldots+r_{j}}=l_j-i_{r_j-1}^{(j)}.
\end{multline*}
Similarly, the lengths of the  bridges corresponding to $B$-faces are denoted by
\begin{multline*}
h_{r_1+\ldots+r_j+1}=m_1^{(1)},h_{r_1+\ldots+r_j+2}=m_2^{(1)}-m_1^{(1)},\dots,h_{r_1+\ldots+r_j+s_1}=m_{s_1}^{(1)}-m_{s_1-1}^{(1)},\\\dots,h_{r_1+\ldots+r_j+s_1+\ldots+s_b}=h_k=m_{s_b}^{(b)}-m_{s_b-1}^{(b)}.
\end{multline*}
All these numbers are in $\N$. Additionally,  the lengths of the walks corresponding to the $B$-faces are  denoted by
\begin{align*}
h_{k+1}=l_{j+1}-m_{s_1}^{(1)},h_{k+2}=l_{j+2}-m_{s_2}^{(2)},\dots,  h_{k+b}= l_{j+b}-m_{s_b}^{(b)}.
\end{align*}
These numbers are in $\N_0$.
Now, we can change the summation and arrive at
\begin{align}\label{eq:sum1}
&\sum_{\substack{r_1,\dots,r_j\in\N,s_1,\dots,s_b\in\N_0\\r_1+\ldots+r_j+s_1+\ldots+s_b=k}}\:\sum_{\substack{h_1,\dots,h_k\in\N,h_{k+1},\dots,h_{k+b}\in\N_0\\h_1+\ldots+h_{k+b}=n}}\frac{\binom{2h_{k+1}}{h_{k+1}}\cdot\ldots\cdot\binom{2h_{k+b}}{h_{k+b}}}{h_1h_2\cdot\ldots\cdot h_k\cdot 2^{2(h_{k+1}+\ldots+h_{k+b})}}\notag\\
&\quad\times \P\Big(\frac{\xi_1}{\sqrt{h_1}}\ge\ldots\ge \frac{\xi_{r_1}}{\sqrt{h_{r_1}}}\Big)\cdot\ldots\cdot\P\Big(\frac{\xi_1}{\sqrt{h_{r_1+\ldots+r_{j-1}+1}}}\ge\ldots\ge \frac{\xi_{r_j}}{\sqrt{h_{r_1+\ldots+r_j}}}\Big)\\
&	\quad\times\P\Big(\frac{\xi_1}{\sqrt{h_{r_1+\ldots+r_j+1}}}\ge\ldots\ge \frac{\xi_{s_1}}{\sqrt{h_{r_1+\ldots+r_j+s_1}}}\ge 0\Big)\notag\\
&	\quad\times\ldots\times\P\Big(\frac{\xi_1}{\sqrt{h_{r_1+\ldots+r_j+s_1+\ldots+s_{b-1}+1}}}\ge\ldots\ge \frac{\xi_{s_b}}{\sqrt{h_{r_1+\ldots+r_j+s_1+\ldots+s_b}}}\ge 0\Big)\notag
\end{align}
using the explicit representation of the internal angles in~\eqref{eq:int_angles} and external angles in~\eqref{eq:ext_angles}.

After introducing additional (signed) permutations of the $\xi_i$'s, we will arrange
the probabilities in the above sum in groups that sum up to $1$. In order to explain this, consider a permutation $\pi\in\text{Sym}(r_1)$. For each tuple $(h_1,\dots,h_{k+b})$ such that $h_1+\ldots+h_{k+b}=n$, the tuple
$(h_{\pi(1)},\dots,h_{{\pi(r_1)}}, h_{r_1+1},\ldots, h_{k+b})$  (in which we permuted the  first $r_1$ coordinates according to $\pi$) satisfies the same condition. Thus the above sum does not change if we replace $(h_1,\dots,h_{r_1})$ by $(h_{\pi(1)},\dots,h_{{\pi(r_1)}})$ inside the summation. Additionally, we observe that
\begin{align*}
\sum_{\pi\in\text{Sym}(r_1)}\P\Big(\frac{\xi_{\pi(1)}}{\sqrt{h_{\pi(1)}}}\ge\ldots\ge \frac{\xi_{\pi(r_1)}}{\sqrt{h_{\pi(r_1)}}}\Big)=1
\end{align*}
and therefore also
\begin{align*}
\sum_{\pi\in\text{Sym}(r_1)}\P\Big(\frac{\xi_{1}}{\sqrt{h_{\pi(1)}}}\ge\ldots\ge \frac{\xi_{r_1}}{\sqrt{h_{\pi(r_1)}}}\Big)=1
\end{align*}
holds true. The same argument can be applied for the first $j$ probabilities in~\eqref{eq:sum1}. For the other $b$ probabilities, we argue in a similar way using signed permutations.  For example, we have
\begin{align*}
\sum_{(\varepsilon,\pi)\in\{\pm 1\}^{s_1}\times\text{Sym}(s_1)}
\P\bigg(\frac{\varepsilon_1\xi_{\pi(1)}}{\sqrt{h_{r_1+\ldots+r_j+\pi(1)}}}\ge\ldots\ge \frac{\varepsilon_{s_1}\xi_{\pi(s_1)}}{\sqrt{h_{r_1+\ldots+r_j+\pi(s_1)}}}\ge 0\bigg)=1
\end{align*}
and thus, also
\begin{align*}
\sum_{(\varepsilon,\pi)\in\{\pm 1\}^{s_1}\times\text{Sym}(s_1)}
\P\bigg(\frac{\varepsilon_1\xi_{1}}{\sqrt{h_{r_1+\ldots+r_j+\pi(1)}}}\ge\ldots\ge \frac{\varepsilon_{s_1}\xi_{s_1}}{\sqrt{h_{r_1+\ldots+r_j+\pi(s_1)}}}\ge 0\bigg)=1.
\end{align*}
Thus, we can rewrite~\eqref{eq:sum1} to obtain
\begin{align}\label{eq:sum2}
&\sum_{\substack{r_1,\dots,r_j\in\N,s_1,\dots,s_b\in\N_0\\r_1+\ldots+r_j+s_1+\ldots+s_b=k}}\frac{1}{r_1!\cdot\ldots\cdot r_j!s_1!\cdot\ldots\cdot s_b!2^{s_1+\ldots+s_b}}\\
&\times\sum_{\substack{h_1,\dots,h_k\in\N,h_{k+1},\dots,h_{k+b}\in\N_0\\h_1+\ldots+h_{k+b}=n}}\frac{\binom{2h_{k+1}}{h_{k+1}}\cdot\ldots\cdot\binom{2h_{k+b}}{h_{k+b}}}{h_1h_2\cdot\ldots\cdot h_k\cdot 2^{2(h_{k+1}+\ldots+h_{k+b})}}.\notag
\end{align}
The  sums can be evaluated  separately. We start with the first sum.
By summing over the possible values $m$ of $s_1+\ldots+s_b$, we get
\begin{align}\label{eq:sum3}
&\sum_{\substack{r_1,\dots,r_j\in\N,s_1,\dots,s_b\in\N_0\\r_1+\ldots+r_j+s_1+\ldots+s_b=k}}
\frac{1}{r_1!\cdot\ldots\cdot r_j!s_1!\cdot\ldots\cdot s_b!2^{s_1+\ldots+s_b}}\notag\\
&	\quad=\sum_{m=0}^{k-j}\sum_{\substack{r_1,\dots,r_j\in\N:\\r_1+\ldots+r_j+m=k}}\frac{1}{r_1!\cdot\ldots\cdot r_j!}\sum_{\substack{s_1,\dots,s_b\in\N_0:\\s_1+\ldots+s_b=m}}\frac{1}{2^ms_1!\cdot\ldots\cdot s_b!}\notag\\
&	\quad=\sum_{m=0}^{k-j}\sum_{\substack{r_1,\dots,r_j\in\N:\\r_1+\ldots+r_j+m=k}}\frac{1}{r_1!\cdot\ldots\cdot r_j!m!2^m}\sum_{\substack{s_1,\dots,s_b\in\N_0:\\s_1+\ldots+s_b=m}}\binom{m}{s_1,\dots,s_b}\notag\\
&	\quad=\sum_{m=0}^{k-j}\sum_{\substack{r_1,\dots,r_j\in\N:\\r_1+\ldots+r_j=k-m}}\frac{b^m}{r_1!\cdot\ldots\cdot r_j!m!2^m},
\end{align}
where we used the multinomial theorem in the last step. Using the representation in~\eqref{eq:rep_stirling2_sum} of the Stirling number of the second kind, we can rewrite~\eqref{eq:sum3} to obtain
\begin{align*}
\sum_{m=0}^{k-j}\Big(\frac{b}{2}\Big)^m\frac{j!}{m!(k-m)!}\stirlingsec{k-m}{j}
&	=\frac{j!}{k!}\sum_{m=0}^{k-j}\Big(\frac{b}{2}\Big)^m\binom{k}{k-m}\stirlingsec{k-m}{j}\\
&	=\frac{j!}{k!}\sum_{m=j}^k\binom{k}{m}\stirlingsec{m}{j}\Big(\frac{b}{2}\Big)^{k-m}\\
&	=\frac{j!}{k!}\stirlingsec{k+b/2}{j+b/2}_{b/2},
\end{align*}
where the last step follows from the definition of the $r$-Stirling numbers of the second kind in~\eqref{eq:def_analy_r-stirling2}.

The second sum of~\eqref{eq:sum2} can be rewritten as follows:
\begin{align}\label{eq:sum4}
\sum_{m=0}^{n-k}\sum_{\substack{h_1,\dots,h_k\in\N:\\h_1+\ldots+h_k+m=n}}\frac{1}{h_1\cdot\ldots\cdot h_k}\sum_{\substack{h_{k+1},\dots,h_{k+b}\in\N_0:\\h_{k+1}+\ldots+h_{k+b}=m}}\frac{1}{2^{2m}}\binom{2h_{k+1}}{h_{{k+1}}}\cdot\ldots\cdot \binom{2h_{k+b}}{h_{{k+b}}}.
\end{align}
A generalization of Vandermonde's convolution~\cite[(3.27)]{Graham1994} yields
\begin{align}\label{eq:vandermonde}
\sum_{\substack{h_{k+1},\dots,h_{k+b}\in\N_0:\\h_{k+1}+\ldots+h_{k+b}=m}}\binom{-1/2}{h_{k+1}}\cdot\ldots\cdot\binom{-1/2}{h_{k+b}}=\binom{-b/2}{m}.
\end{align}
Using the formula
\begin{align*}
\binom{-1/2}{d}=\Big(-\frac{1}{4}\Big)^d\binom{2d}{d},\quad d\in\N_0;
\end{align*}
see~\cite[(5.37)]{Graham1994}, we have
\begin{align*}
\binom{-1/2}{h_{k+1}}\cdot\ldots\cdot\binom{-1/2}{h_{k+b}}
&	=\frac{(-1)^m}{4^m}\binom{2h_{k+1}}{h_{k+1}}\cdot\ldots\cdot\binom{2h_{k+b}}{h_{k+b}},
\end{align*}
for all $h_{k+1},\dots,h_{k+b}\in\N_0$ such that $h_{k+1}+\ldots+h_{k+b}=m$. Combining this with~\eqref{eq:vandermonde}, we obtain
\begin{align*}
\sum_{\substack{h_{k+1},\dots,h_{k+b}\in\N_0:\\h_{k+1}+\ldots+h_{k+b}=m}}\binom{2h_{k+1}}{h_{k+1}}\cdot\ldots\cdot\binom{2h_{k+b}}{h_{k+b}}
&	=(-4)^m\sum_{\substack{h_{k+1},\dots,h_{k+b}\in\N_0:\\h_{k+1}+\ldots+h_{k+b}=m}}\binom{-1/2}{h_{k+1}}\cdot\ldots\cdot\binom{-1/2}{h_{k+b}}\\
&	=(-4)^m\binom{-b/2}{m}.
\end{align*}
Applying this to~\eqref{eq:sum4} yields
\begin{align*}
&\sum_{m=0}^{n-k}\sum_{\substack{h_1,\dots,h_k\in\N:\\h_1+\ldots+h_k+m=n}}\frac{1}{h_1\cdot\ldots\cdot h_k}\frac{1}{2^{2m}}(-4)^m\binom{-b/2}{m}\\
&	\quad=\sum_{m=0}^{n-k}(-1)^m\binom{-b/2}{m}\sum_{\substack{h_1,\dots,h_k\in\N:\\h_1+\ldots+h_k=n-m}}\frac{1}{h_1\cdot\ldots\cdot h_k}\\
&	\quad=\sum_{m=0}^{n-k}\frac{b}{2}\Big(\frac{b}{2}+1\Big)\cdot\ldots\cdot\Big(\frac{b}{2}+m-1\Big)\frac{k!}{m!(n-m)!}\stirling{n-m}{k}\\
&	\quad=\frac{k!}{n!}\sum_{m=0}^{n-k}\binom{n}{m}\stirling{n-m}{k}\frac{b}{2}\Big(\frac{b}{2}+1\Big)\cdot\ldots\cdot\Big(\frac{b}{2}+m-1\Big)\\
&	\quad=\frac{k!}{n!}\stirling{n+b/2}{k+b/2}_{b/2},
\end{align*}
where we used the representation in~\eqref{eq:rep_stirling1_sum} of the Stirling numbers of the first kind and the definition of the $r$-Stirling numbers of the first kind in~\eqref{eq:def_analy_r-stirling1}. If we insert both sums in~\eqref{eq:sum2}, we finally obtain
\begin{align*}
\sum_{\substack{l_1,\dots,l_j\in\N,l_{j+1},\dots,l_{j+b}\in\N_0:\\l_1+\ldots+l_{j+b}=n}}\upsilon_k(T_l)
&	=\frac{j!}{k!}\stirlingsec{k+b/2}{j+b/2}_{b/2}\frac{k!}{n!}\stirling{n+b/2}{k+b/2}_{b/2}
=\frac{j!}{n!}\stirling{n+b/2}{k+b/2}_{b/2}\stirlingsec{k+b/2}{j+b/2}_{b/2},
\end{align*}
which completes the proof.
\end{proof}

\subsection{Proof of Theorem~\ref{theorem:prod_typeB_simplex}}\label{section:proof_prod_theorems}

Let $b\in\N$. Recall that $K^B=K_{n_1}^B\times\ldots\times K_{n_b}^B$ and $K^A=K_{n_1}^A\times\ldots\times K_{n_b}^A$ for $n_1,\ldots,n_b\in\N_0$ such that $n:=n_1+\ldots+n_b$,
where
\begin{align*}
K_d^B&=\{x\in\R^d:1\ge x_1\ge x_2\ge \ldots\ge x_d\ge 0\},\\
K_{d}^A&=\{x\in\R^{d+1}:x_1\ge\ldots\ge x_{d+1}, x_1-x_{d+1}\le 1, x_1+\ldots+x_{d+1}=0\},
\end{align*}
for $d\in\N$, denote the Schl\"afli orthoschemes of types $B$ and $A$ in $\R^d$, respectively $\R^{d+1}$. For convenience, we set $K_0^B=K_0^A=\{0\}$. 
We want to show that
\begin{align*}
\sum_{F\in\cF_j(K^B)}\upsilon_k(T_F(K^B))=\sum_{F\in\cF_j(K^A)}\upsilon_k(T_F(K^A))=R_1\big(k,j,b,(n_1,\ldots,n_b)\big)
\end{align*}
holds for all $j\in\{0,\ldots,n\}$ and $k\in\{0,\ldots,n\}$, where for $d\in\{0,\frac{1}{2},1\}$ we define
\begin{align*}
&R_d\big(k,j,b,(n_1,\ldots,n_b)\big)\\
&	\quad:=\big[t^k\big]\big[x_1^{n_1}\cdot\ldots\cdot x_b^{n_b}\big]\big[u^j\big]\frac{(1-x_1)^{-d(t+1)}\cdot\ldots\cdot (1-x_b)^{-d(t+1)}}{\big(1-u((1-x_1)^{-t}-1)\big)\cdot\ldots\cdot \big(1-u((1-x_b)^{-t}-1)\big)}.
\end{align*}
%Recall that
%$[t^N]f(t):=\frac{1}{N!}f^{(N)}(0)$ denotes the coefficient of $t^N$ in the Taylor expansion of a function $f:\R\to\R$ around $0$ and
%$$
%\big[x_1^{N_1}\cdot\ldots\cdot x_b^{N_b}\big]g(x_1,\ldots,x_b):=\frac{1}{N_1!\cdot\ldots\cdot N_b!}\frac{\partial^{N_1+\ldots+N_b}}{\partial %x_1^{N_1}\ldots\partial x_b^{N_b}}g(0,\ldots,0)
%$$
%is the coefficient of $x_1^{N_1}\cdot\ldots\cdot x_b^{N_b}$ in the multidimensional Taylor expansion of a function $g:\R^b\to\R$.

\begin{proof}[Proof of Theorem~\ref{theorem:prod_typeB_simplex}]
We divide the proof into $3$ steps. In the first step, we describe the tangent cones $T_F(K^B)$ in terms of products of Weyl chambers. In Step $2$, we derive a formula for the generalized angle sums of the $T_F(K^B)$'s and show that the derived formula simplifies to the desired constant $R_1(k,j,b,(n_1,\ldots,n_b))$. In the third step, following the arguments of the $B$-case, we write the conic intrinsic volumes of the tangent cones $T_F(K^A)$ in terms of products of Weyl chambers, counted with a certain multiplicity, and show that the formula for the generalized angle sums in the $B$-case also holds in the $A$-case.

\vspace*{2mm}
\noindent
\textsc{Step 1.}
Let $j\in\{0,\ldots,n\}$ and $k\in\{0,\ldots,n\}$ be given. It is easy to check that the constant $R_d\big(k,j,b,(n_1,\ldots,n_b)\big)$ vanishes for $k<j$, so that we need to consider the case $k\geq j$ only. For each $j$-face $F$ of $K^B$, there are numbers $j_1,\ldots,j_b\in\N_0$ satisfying $j_1+\ldots+j_b=j$, such that
\begin{align*}
F=F_1\times\ldots\times F_b
\end{align*}
for some $F_1\in\cF_{j_1}(K_{n_1}^B),\ldots, F_b\in\cF_{j_b}(K_{n_b}^B)$. Thus, the tangent cone of $K^B$ at $F$ is given by the following product formula:
\begin{align*}
T_F(K^B)=T_{F_1}(K_{n_1}^B)\times\ldots\times T_{F_b}(K_{n_b}^B).
\end{align*}
In order to see this, observe that $\relint F=\relint F_1 \times\ldots\times \relint F_b$. Take $x=(x^{(1)},\ldots,x^{(b)})\in\relint F\subset \R^n$, where $x^{(i)}\in\relint F_i\subset\R^{n_i}$ for $i=1,\ldots,b$. Then, we have
\begin{align*}
&T_F(K^B)\\
&	\quad=\{v\in\R^n:\exists \eps >0 \text{ with } x+\eps v\in K^B\}\\
&	\quad=\{v\in\R^{n_1}\times\ldots\times\R^{n_b}:\exists \eps >0 \text{ with } x+\eps v\in K_{n_1}^B\times\ldots\times K_{n_b}^B\}\\
&	\quad=\{v_1\in\R^{n_1}:\exists \eps >0 \text{ with }x^{(1)}+\eps v_1\in K_{n_1}^B\}\times\ldots\times\{v_b\in\R^{n_b}:\exists \eps >0 \text{ with }x^{(b)}+\eps v_b\in K^B_{n_b}\}\\
&	\quad=T_{F_1}(K_{n_1}^B)\times\ldots\times T_{F_b}(K_{n_b}^B).
\end{align*}
Applying Lemma~\ref{lemma:tangent_cones_typeB} to the individual terms in the product, we observe that the collection
\begin{align*}
T_{F_1}(K_{n_1}^B)\times\ldots\times T_{F_b}(K_{n_b}^B),
\end{align*}
where $F_1\in\cF_{j_1}(K_{n_1}^B),\ldots, F_b\in\cF_{j_b}(K_{n_b}^B)$, coincides (up to isometries) with the collection of cones
\begin{align*}
G_i:=A^{(i_1^{(1)})}\times \ldots\times A^{(i_{j_1}^{(1)})}\times\ldots\times A^{(i_1^{(b)})}\times\ldots\times A^{(i_{j_b}^{(b)})}\times B^{(i_0^{(1)})}\times B^{(i_{j_1+1}^{(1)})}\times\ldots\times B^{(i_0^{(b)})}\times B^{(i_{j_b+1}^{(b)})}
\end{align*}
where $i_1^{(1)},\ldots,i_{j_1}^{(1)},\ldots,i_1^{(b)},\ldots,i_{j_b}^{(b)}\in\N$ and $i_0^{(1)},i_{j_1+1}^{(1)},\ldots,i_0^{(b)},i_{j_b+1}^{(b)}\in\N_0$ such that
\begin{align*}
i_0^{(1)}+\ldots+i_{j_1+1}^{(1)}=n_1,\ldots,i_0^{(b)}+\ldots+i_{j_b+1}^{(b)}=n_b.
\end{align*}
This yields
\begin{align}\label{eq:important_typeB}
\sum_{F\in\cF_j(K^B)}\upsilon_{k}(T_F(K^B))
&	=\sum_{\substack{j_1,\ldots,j_b\in\N_0:\\j_1+\ldots+j_b=j}}\:\sum_{F_1\in\cF_{j_1}(K_{n_1}^B),\ldots, F_b\in\cF_{j_b}(K_{n_b}^B)}\upsilon_k \big(T_{F_1}(K_{n_1}^B)\times\ldots\times T_{F_b}(K_{n_b}^B)\big)\notag\\
&	=\sum_{\substack{j_1,\ldots,j_b\in\N_0:\\j_1+\ldots+j_b=j}}\:\sum_{\substack{i_0^{(1)}+\ldots+i_{j_1+1}^{(1)}=n_1\\\ldots\\i_0^{(b)}+\ldots+i_{j_b+1}^{(b)}=n_b}}\upsilon_k(G_i).
%&	=\sum_{\substack{l_1,\ldots,l_j\in\N,l_{j+1},\ldots,l_{j+2b}\in\N_0\\l_1+\ldots+l_{j+2b}=n}}\upsilon_k\Big(A^{(l_1)}\times\ldots\times %A^{(l_j)}\times B^{(l_{j+1})}\times\ldots\times B^{(l_{j+2b})}\Big).
\end{align}
%?????
%The change of summation indices in the last step is just their renumbering.
% more precisely we set
%\begin{align*}
%&l_1=i_1^{(1)},l_2=i_2^{(1)},\ldots,l_{j_1}=i_{j_1}^{(1)},\\
%&l_{j_1+1}=i_1^{(2)},l_{j_1+2}=i_2^{(2)},\ldots,l_{j_2}=i_{j_1}^{(2)},\\
%&\quad\quad\vdots\\
%&l_{j_1+\ldots+j_{b-1}+1}=i_1^{(j)},l_{j_1+\ldots+j_{b-1}+2}=i_2^{(j)}\ldots,l_j=i_{j_b}^{(b)}\\
%&l_{j+1}=i_0^{(1)},l_{j+2}=i_{j_1+1}^{(1)},l_{j+3}=i_0^{(2)},l_{j+4}=i_{j_2+1}^{(2)},\ldots,l_{j+2b-1}=i_0^{(b)},l_{j+2b}=i_{j_b+1}^{(b)}.
%\end{align*}
%Then, we have $l_1+\ldots+l_{j+2b}=n_1+\ldots+n_b=n$. Moreover, for every composition $l_1,\ldots,l_j\in\N$, $l_{j+1},\ldots,l_{j+2b}\in\N_0$ such that $l_1+\ldots+l_{j+2b}=n$, there are numbers $j_1,\ldots,j_b\in\N_0$ satisfying $j_1+\ldots+j_b=j$, such that $n_1,\ldots,n_b$ can be decomposed into $b$ compositions of the form $i_0^{(1)}+\ldots+i_{j_1+1}^{(1)}=n_1,\ldots,i_0^{(b)}+\ldots+i_{j_b+1}^{(b)}=n_b$, respectively, where $i_0^{(1)},\ldots,i_{j_1+1}^{(1)},\ldots,i_0^{(b)},\ldots,i_{j_b+1}^{(b)}\in\N$. \red{???}

\vspace*{2mm}
\noindent
\textsc{Step 2.}
Similarly to the proof of Proposition~\ref{prop:sum_intr_vol}, we observe that $\nu_k(G_i)$  is the coefficient of $t^k$ in the polynomial
\begin{multline}\label{eq:aaaaa}
\frac{t^{\overline{i_1^{(1)}}}}{i_1^{(1)}!}\cdot\ldots\cdot \frac{t^{\overline{i_{j_1}^{(1)}}}}{i_{j_1}^{(1)}!}
\cdot\ldots\cdot \frac{t^{\overline{i_1^{(b)}}}}{i_1^{(b)}!}\cdot\ldots\cdot \frac{t^{\overline{i_{j_b}^{(b)}}}}{i_{j_b}^{(b)}!}\\
\times\frac{(t+1)(t+3)\cdot\ldots\cdot (t+2i_0^{(1)}-1)}{2^{i_0^{(1)}}i_0^{(1)}!}\cdot\frac{(t+1)(t+3)\cdot\ldots\cdot (t+2i_{j_1+1}^{(1)}-1)}{2^{i_{j_1+1}^{(1)}}i_{j_1+1}^{(1)}!}\\
\times\ldots\times\frac{(t+1)(t+3)\cdot\ldots\cdot (t+2i_0^{(b)}-1)}{2^{i_0^{(b)}}i_0^{(b)}!}\cdot \frac{(t+1)(t+3)\cdot\ldots\cdot (t+2i_{j_b+1}^{(b)}-1)}{2^{i_{j_b+1}^{(b)}}i_{j_b+1}^{(b)}!}.
\end{multline}
Following the same arguments as in the proof of Proposition~\ref{prop:sum_intr_vol}, we obtain that
\begin{align*}
\sum_{F\in\cF_j(K^B)}\upsilon_{k}(T_F(K^B)) =\big[t^k]\sum_{\substack{j_1,\ldots,j_b\in\N_0:\\j_1+\ldots+j_b=j}}\:
\sum_{\substack{i_0^{(1)}+\ldots+i_{j_1+1}^{(1)}=n_1\\\ldots\\i_0^{(b)}+\ldots+i_{j_b+1}^{(b)}=n_b}}
\text{term in~\eqref{eq:aaaaa}}.
\end{align*}
%\begin{align*}
%&\sum_{F\in\cF_j(K^B)}\upsilon_{k}(T_F(K^B))\\
%&	%\quad=\big[t^k]\sum_{\substack{j_1,\ldots,j_b\in\N_0:\\j_1+\ldots+j_b=j}}\:\sum_{\substack{i_0^{(1)}+\ldots+i_{j_1+1}^{(1)}=n_1\\\ldots\\i_0^{(b)}+\ldots+i_{j_b+1}^{(b)}=n_b}}\Bigg(\frac{t^{\overline{i_1^{(1)}}}}{i_1^{(1)}!}\cdot\ldots\cdot %\frac{t^{\overline{i_{j_1}^{(1)}}}}{i_{j_1}^{(1)}!}
%\cdot\ldots\cdot \frac{t^{\overline{i_1^{(b)}}}}{i_1^{(b)}!}\cdot\ldots\cdot \frac{t^{\overline{i_{j_b}^{(b)}}}}{i_{j_b}^{(b)}!}\\
%&	\quad\quad\times\frac{(t+1)(t+3)\cdot\ldots\cdot (t+2i_0^{(1)}-1)}{2^{i_0^{(1)}}i_0^{(1)}!}\cdot\frac{(t+1)(t+3)\cdot\ldots\cdot %(t+2i_{j_1+1}^{(1)}-1)}{2^{i_{j_1+1}^{(1)}}i_{j_1+1}^{(1)}!}\\
%&	\quad\quad\times\ldots\times\frac{(t+1)(t+3)\cdot\ldots\cdot (t+2i_0^{(b)}-1)}{2^{i_0^{(b)}}i_0^{(b)}!}\cdot \frac{(t+1)(t+3)\cdot\ldots\cdot %(t+2i_{j_b+1}^{(b)}-1)}{2^{i_{j_b+1}^{(b)}}i_{j_b+1}^{(b)}!}\Bigg).
%\end{align*}
We introduce new variables $x_1,\ldots,x_b$ and expand the product to write the right-hand side as follows:
\begin{multline*}
\big[t^k]\sum_{\substack{j_1,\ldots,j_b\in\N_0:\\j_1+\ldots+j_b=j}}\Bigg(\big[x_1^{n_1}\big]\bigg(\sum_{l=1}^\infty\frac{t^{\overline{l}}}{l!}x_1^l\bigg)^{j_1}\bigg(\sum_{l=0}^\infty \frac{(t+1)(t+3)\cdot\ldots\cdot (t+2l-1)}{2^ll!}x_1^l\bigg)^2\times\ldots\\
\times\big[x_b^{n_b}\big]\bigg(\sum_{l=1}^\infty\frac{t^{\overline{l}}}{l!}x_b^l\bigg)^{j_b}\bigg(\sum_{l=0}^\infty \frac{(t+1)(t+3)\cdot\ldots\cdot (t+2l-1)}{2^ll!}x_b^l\bigg)^2\Bigg).
\end{multline*}
%By inserting the formulas~\eqref{eq:sum/l!} and~\eqref{eq:sum_B} given by
%\begin{align*}
%\sum_{l=1}^\infty \frac{t^{\overline{l}}}{l!}x_i^l=(1-x_i)^{-t}-1\quad \text{and}\quad
%\sum_{l=0}^\infty\frac{(t+1)(t+3)\cdot\ldots\cdot(t+2l-1)}{2^ll!}x_i^l
%&	=(1-x_i)^{-\frac{1}{2}(t+1)},
%\end{align*}%for $i=1,\ldots,b$,
Using the formulas~\eqref{eq:sum/l!} and~\eqref{eq:sum_B} we arrive at  
\begin{align}\label{eq:further_compute}
\sum_{F\in\cF_j(K^B)}\upsilon_{k}(T_F(K^B))
&	=\big[t^k\big]\sum_{\substack{j_1,\ldots,j_b\in\N_0:\\j_1+\ldots+j_b=j}}\bigg(\big[x_1^{n_1}\big]\left((1-x_1)^{-(t+1)}\left((1-x_1)^{-t}-1\right)^{j_1}\right)\notag\\
&	\hspace*{2.5cm}\times\ldots\times\big[x_b^{n_b}\big]\left((1-x_b)^{-(t+1)}\left((1-x_b)^{-t}-1\right)^{j_b}\right)\bigg)\notag\\
&	=
\big[t^k\big]\big[x_1^{n_1}\cdot\ldots\cdot x_b^{n_b}\big]
\sum_{\substack{j_1,\ldots,j_b\in\N_0:\\j_1+\ldots+j_b=j}}
\bigg((1-x_1)^{-(t+1)}\cdot\ldots\cdot(1-x_b)^{-(t+1)}  \notag\\
&	\quad\times\left((1-x_1)^{-t}-1\right)^{j_1}\cdot\ldots\cdot \left((1-x_b)^{-t}-1\right)^{j_b}\bigg).
\end{align}
By introducing a new variable $u$ and expanding the product again, we arrive at
\begin{align*}
\sum_{F\in\cF_j(K^B)}\upsilon_{k}(T_F(K^B))
&	=\big[t^k\big]\big[x_1^{n_1}\cdot\ldots\cdot x_b^{n_b}\big]\big[u^j\big]\left((1-x_1)^{-(t+1)}\cdot\ldots\cdot(1-x_b)^{-(t+1)} \right) \\
&	\quad\times\bigg(\sum_{l=0}^\infty \big((1-x_1)^{-t}-1\big)^lu^l\bigg)\cdot\ldots\cdot\bigg(\sum_{l=0}^\infty \big((1-x_b)^{-t}-1\big)^lu^l\bigg)\\
&	=\big[t^k\big]\big[x_1^{n_1}\cdot\ldots\cdot x_b^{n_b}\big]\big[u^j\big]
\left((1-x_1)^{-(t+1)}\cdot\ldots\cdot(1-x_b)^{-(t+1)}  \right)\\
&	\quad\times\frac{1}{\big(1-u((1-x_1)^{-t}-1)\big)\cdot\ldots\cdot\big(1-u((1-x_b)^{-t}-1)\big)}\\
&	=R_1\big(k,j,b,(n_1,\ldots,n_b)\big),
\end{align*}
which completes the proof of the $B$-case.

\vspace*{2mm}
\noindent
\textsc{Step 3.} In the $A$-case, instead of considering $K^A$, we look at $\widetilde K^A=\widetilde K^A_{n_1}\times\ldots\times\widetilde K^A_{n_b}$ for $n_1,\ldots,n_b\in\N_0$ such that $n:=n_1+\ldots+n_b$, where
\begin{align*}
\widetilde{K}_{d}^A=\{x\in\R^{d+1}:x_1\ge\ldots\ge x_{d+1}, x_1-x_{d+1}\le 1\},\quad d\in\N,
\end{align*}
denotes the unbounded polyhedral set related to $K_{d}^A$. Note that $\widetilde K_0^A=\R$.

%Since $\widetilde{K}^A$ is given as the product of the polyhedral sets $\widetilde{K}_{n_1}^A,\ldots,\widetilde{K}_{n_b}^A$, its $j$-faces are given as products $F_1\times \ldots\times F_b$, where $F_i$ is a $j_i$-face of $\widetilde{K}_{n_i}^A$ and $j_1,\ldots,j_b\in\N$ satisfy $j_1+\ldots+j_b=j$, where $j\in\{b,\ldots,n+b\}$. Note that $j_1,\ldots,j_b$ are different from $0$ since the polyhedral sets $\widetilde{K}_{n_1}^A,\ldots,\widetilde{K}_{n_b}^A$ have no $0$-dimensional faces. Again, we  observe that the tangent cone $T_F(\widetilde{K}^A)$ is  given as the product
%the product
%of the tangent cones $T_{F_1}(\widetilde{K}_{n_1}^A),\ldots,T_{F_b}(\widetilde{K}_{n_b}^A)$, i.e.
%\begin{align*}
%T_F(\widetilde{K}^A)=T_{F_1}(\widetilde{K}_{n_1}^A)\times\ldots\times T_{F_b}(\widetilde{K}_{n_b}^A).
%\end{align*}

Following the arguments of Step $1$, we can write the tangent cones of $T_F(K^A)$ (and also $T_F(\widetilde K^A)$) as products of tangent cones at their respective faces.
Using $\upsilon_k(K_n^A)=\upsilon_{k+1}(K_n^A\oplus L_{n+1})$ and~\eqref{eq:tangent_cones_transform}, we obtain
\begin{align}\label{eq:sum_rewrite1}
&\sum_{F\in\F_j(K^A)}\upsilon_k(T_F(K^A))\notag\\
&	\quad=\sum_{\substack{j_1,\ldots,j_b\in\N_0:\\j_1+\ldots+j_b=j}}\:\sum_{F_1\in\cF_{j_1}(K_{n_1}^A),\ldots,F_b\in\cF_{j_b}(K_{n_b}^A)}\upsilon_k\big(T_{F_1}(K_{n_1}^A)\times\ldots\times T_{F_b}(K_{n_b}^A)\big)\notag\\
&	\quad=\sum_{\substack{j_1,\ldots,j_b\in\N_0:\\j_1+\ldots+j_b=j}}\:\sum_{F_1\in\cF_{j_1+1}(\widetilde{K}_{n_1}^A),\ldots,F_b\in\cF_{j_b+1}(\widetilde{K}_{n_b}^A)}\upsilon_{k+b}\big(T_{F_1}(\widetilde{K}_{n_1}^A)\times\ldots\times T_{F_b}(\widetilde{K}_{n_b}^A)\big).
%&	\quad=\sum_{F\in\cF_{j+b}(\widetilde{K}^A)}\upsilon_{k+b}(T_F(\widetilde{K}^A))\\
%&	\quad=R_1\big(k,j,b,(n_1,\ldots,n_b)\big),
\end{align}
Applying Lemma~\ref{lemma:collection_tangentcones_A} to each individual tangent cone in the product, we see that the collection of tangent cones $T_{F_1}(\widetilde{K}_{n_1}^A)\times\ldots\times T_{F_b}(\widetilde{K}_{n_b}^A)$, where $F_1\in \cF_{j_1+1}(\widetilde{K}_{n_1}^A),\ldots,F_b\in \cF_{j_b+1}(\widetilde{K}_{n_b}^A)$, coincides (up to isometry) with the collection
\begin{align*}
A^{(i_1^{(1)})}\times\ldots\times A^{(i_{j_1+1}^{(1)})}\times\ldots\times A^{(i_1^{(b)})}\times\ldots\times A^{(i_{j_b+1}^{(b)})},
\end{align*}
where $i_1^{(1)},\ldots,i_{j_1+1}^{(1)},\ldots,i_1^{(b)},\ldots,i_{j_b+1}^{(b)}\in \N$ such that
\begin{align*}
i_1^{(1)}+\ldots+i_{j_1+1}^{(1)}=n_1+1,\ldots,i_1^{(b)}+\ldots+i_{j_b+1}^{(b)}=n_b+1
\end{align*}
and each cone of the above collection is taken with multiplicity $i_1^{(1)}\cdot\ldots\cdot i_1^{(b)}$. Thus, the formula in~\eqref{eq:sum_rewrite1} can be rewritten as
\begin{align}\label{eq:sum123}
%\sum_{F\in\cF_j(\widetilde{K}^A)}\upsilon_k(T_F(\widetilde{K}^A))\notag
%&	=\sum_{\substack{j_1,\ldots,j_b\in\N:\\j_1+\ldots+j_b=j}}\:\sum_{F_1\in\cF_{j_1}(\widetilde{K}_{n_1}^A),\ldots,F_b\in\cF_{j_b}(\widetilde{K}_{n_b}^A)}\upsilon_k\big(T_{F_1}(\widetilde{K}_{n_1}^A)\times\ldots\times T_{F_b}(\widetilde{K}_{n_b}^A)\big)\notag\\
\sum_{\substack{j_1,\ldots,j_b\in\N_0:\\j_1+\ldots+j_b=j}}\:\sum_{\substack{i_1^{(1)}+\ldots+i_{j_1+1}^{(1)}=n_1+1\\\ldots\\i_1^{(b)}+\ldots+i_{j_b+1}^{(b)}=n_b+1}}i_1^{(1)}\cdot\ldots\cdot i_1^{(b)}\upsilon_{k+b}\big(A^{(i_1^{(1)})}\times\ldots\times A^{(i_{j_b+1}^{(b)})}\big)
\end{align}
for $k\in\{b,\ldots,n+b\}$ and $j\in\{b,\ldots,n+b\}$. The conic intrinsic volume on the right hand side of~\eqref{eq:sum123} is given as the coefficient of $t^{k+b}$ in the following polynomial:
\begin{align*}
\frac{t^{\overline{i_1^{(1)}}}}{i_1^{(1)}!}\cdot\ldots\cdot \frac{t^{\overline{i_{j_1+1}^{(1)}}}}{i_{j_1+1}^{(1)}!}\times\ldots\times \frac{t^{\overline{i_1^{(b)}}}}{i_1^{(b)}!}\cdot\ldots\cdot \frac{t^{\overline{i_{j_b+1}^{(b)}}}}{i_{j_b+1}^{(b)}!}.
\end{align*}
Thus, the term in~\eqref{eq:sum123} simplifies to
\begin{align*}
&\big[t^{k+b}\big]\sum_{\substack{j_1,\ldots,j_b\in\N_0:\\j_1+\ldots+j_b=j}}\:\sum_{\substack{i_1^{(1)}+\ldots+i_{j_1+1}^{(1)}=n_1+1\\\ldots\\i_1^{(b)}+\ldots+i_{j_b+1}^{(b)}=n_b+1}}\Bigg(\frac{t^{\overline{i_1^{(1)}}}}{(i_1^{(1)}-1)!}\frac{t^{\overline{i_2^{(1)}}}}{i_2^{(1)}!}\cdot\ldots\cdot \frac{t^{\overline{i_{j_1+1}^{(1)}}}}{i_{j_1+1}^{(1)}!}\times\ldots\times \frac{t^{\overline{i_1^{(b)}}}}{(i_1^{(b)}-1)!}\frac{t^{\overline{i_2^{(b)}}}}{i_2^{(b)}!}\cdot\ldots\cdot \frac{t^{\overline{i_{j_b+1}^{(b)}}}}{i_{j_b+1}^{(b)}!}\Bigg)\\
&	\quad=\big[t^{k+b}\big]\sum_{\substack{j_1,\ldots,j_b\in\N_0:\\j_1+\ldots+j_b=j}}\bigg(\big[x_1^{n_1+1}\big]\left(tx_1(1-x_1)^{-(t+1)}\left((1-x_1)^{-t}-1\right)^{j_1}\right)\\
&	\hspace*{6.5cm}\times\ldots\times \big[x_b^{n_b+1}\big]\left(tx_b(1-x_b)^{-(t+1)}\left((1-x_b)^{-t}-1\right)^{j_b}\right)\bigg)\\
&	\quad=\big[t^k\big]\big[x_1^{n_1}\cdot\ldots\cdot x_b^{n_b}\big](1-x_1)^{-(t+1)}\cdot\ldots\cdot (1-x_b)^{-(t+1)}\\
&	\hspace*{6.1cm}\times\sum_{\substack{j_1,\ldots,j_b\in\N_0:\\j_1+\ldots+j_b=j}}\left(\left((1-x_1)^{-t}-1\right)^{j_1}\cdot\ldots\left((1-x_b)^{-t}-1\right)^{j_b}\right),
\end{align*}
which coincides with~\eqref{eq:further_compute} and therefore completes the proof. Note that we used~\eqref{eq:sum/l!} and
\begin{align*}
\sum_{l=1}^{\infty}\frac{t^{\overline{l}}}{(l-1)!}x^l
	=tx\sum_{l=0}^\infty\frac{(t+1)(t+2)\cdot\ldots\cdot(t+l)}{l!}x^l
	=tx\sum_{l=0}^\infty\binom{-(t+1)}{l}(-x)^l
	=tx(1-x)^{-(t+1)},
\end{align*}
which follows from the binomial series.
\end{proof}

\subsection{Proof of Proposition~\ref{prop:sum_prod_simplices_A&B}}\label{section:proof_prop_sum_prod_a,b}
Our goal is to show that
\begin{align*}
\sum_{\substack{n_1,\ldots,n_b\in\N_0:\\n_1+\ldots+n_b=n}}\sum_{F\in\cF_j(K^B)}\upsilon_k(T_F(K^B))
&	=\sum_{\substack{n_1,\ldots,n_b\in\N_0:\\n_1+\ldots+n_b=n}}\sum_{F\in\cF_j(K^A)}\upsilon_k(T_F(K^A))\\
&	=\frac{j!}{n!}\binom{j+b-1}{b-1}\stirling{n+b}{k+b}_b\stirlingsec{k+b}{j+b}_b
\end{align*}
holds for $j\in\{0,\ldots,n\}$ and $k\in\{0,\ldots,n\}$.

\begin{proof}[Proof of Proposition~\ref{prop:sum_prod_simplices_A&B}]
At first we show the formula for $K^B$. In~\eqref{eq:important_typeB} we saw that
\begin{align*}
\sum_{F\in\cF_j(K^B)}\upsilon_k\big(T_F(K^B)\big)=\sum_{\substack{j_1,\ldots,j_b\in\N_0:\\j_1+\ldots+j_b=j}}\:\sum_{\substack{i_0^{(1)}+\ldots+i_{j_1+1}^{(1)}=n_1\\\ldots\\i_0^{(b)}+\ldots+i_{j_b+1}^{(b)}=n_b}}\upsilon_k(G_i)
\end{align*}
holds true for all $j,k\in\{0,\ldots,n\}$, where
\begin{align*}
G_i=A^{(i_1^{(1)})}\times \ldots\times A^{(i_{j_1}^{(1)})}\times\ldots\times A^{(i_1^{(b)})}\times\ldots\times A^{(i_{j_b}^{(b)})}\times B^{(i_0^{(1)})}\times B^{(i_{j_1+1}^{(1)})}\times\ldots\times B^{(i_0^{(b)})}\times B^{(i_{j_b+1}^{(b)})}.
\end{align*}
Note that in the second sum on the right hand side, the indices satisfy $i_0^{(l)},i_{j_l+1}^{(l)}\in\N_0$ and $i_1^{(l)},\ldots,i_{j_l}^{(l)}\in\N$ for all $l\in\{1,\ldots,b\}$. Thus, we obtain
\begin{align*}
&\sum_{\substack{n_1,\ldots,n_b\in\N_0:\\n_1+\ldots+n_b=n}}\:\sum_{F\in\cF_j(K^B)}\upsilon_k\big(T_F(K^B)\big)\\
&	\quad=\sum_{\substack{j_1,\ldots,j_b\in\N_0:\\j_1+\ldots+j_b=j}}\:\sum_{\substack{n_1,\ldots,n_b\in\N_0:\\n_1+\ldots+n_b=n}}\:\sum_{\substack{i_0^{(1)}+\ldots+i_{j_1+1}^{(1)}=n_1\\\ldots\\i_0^{(b)}+\ldots+i_{j_b+1}^{(b)}=n_b}}\upsilon_k(G_i)\\
&	\quad=\sum_{\substack{j_1,\ldots,j_b\in\N_0:\\j_1+\ldots+j_b=j}}\:\sum_{\substack{l_1,\ldots,l_j\in\N,l_{j+1},\ldots,l_{j+2b}\in\N_0:\\l_1+\ldots+l_{j+2b}=n}}\upsilon_k\big(A^{(l_1)}\times\ldots\times A^{(l_j)}\times B^{(l_{j+1})}\times\ldots\times B^{(l_{j+2b})}\big).
\end{align*}
The last equation follows from simple renumbering. Applying Proposition~\ref{prop:sum_intr_vol} with $b$ replaced by $2b$ yields
\begin{align*}
\sum_{\substack{n_1,\ldots,n_b\in\N_0:\\n_1+\ldots+n_b=n}}\:\sum_{F\in\cF_j(K^B)}\upsilon_k\big(T_F(K^B)\big)
&	=\sum_{\substack{j_1,\ldots,j_b\in\N_0:\\j_1+\ldots+j_b=j}}\frac{j!}{n!}\stirling{n+b}{k+b}_{b}\stirlingsec{k+b}{j+b}_{b}\\
&	=\frac{j!}{n!}\binom{j+b-1}{b-1}\stirling{n+b}{k+b}_{b}\stirlingsec{k+b}{j+b}_{b},
\end{align*}
where we used the well-known fact that the number of compositions of $j$ into $b$ non-negative integers (which may be $0$) is given by $\binom{j+b-1}{b-1}$.
The formula for $K^A$ follows from $$\sum_{F\in\cF_j(K^B)}\upsilon_k(T_F(K^B))=\sum_{F\in\cF_j(K^A)}\upsilon_k(T_F(K^A)),$$
which is due to Theorem~\ref{theorem:prod_typeB_simplex}. This completes the proof.
\end{proof}

\section*{Acknowledgement}
Supported by the German Research Foundation under Germany's Excellence Strategy  EXC 2044 -- 390685587, Mathematics M\"unster: Dynamics - Geometry - Structure  and by the DFG priority program SPP 2265 \textit{Random Geometric Systems}.

\vspace{1cm}

\bibliography{bib}
\bibliographystyle{abbrv}

%\bookmarksetup{startatroot}
% --- Literaturverzeichnis
%\printbibliography[title={References}]

%\bookmarksetup{startatroot}

\end{document}